\theoremstyle{remark}
\newtheorem{para}{\bf}[subsection]
\newtheorem{example}[para]{\bf Example}
\newtheorem{rem}[para]{\bf Remark}
\theoremstyle{definition}
\theoremstyle{plain}
\newtheorem{theorem}[para]{Theorem}
\newtheorem{lemma}[para]{Lemma}
\newtheorem{prop}[para]{Proposition}
\newenvironment{numequation}{\addtocounter{para}{1}
\begin{equation}}{\end{equation}}
\newenvironment{nummultline}{\addtocounter{para}{1}
\begin{multline}}{\end{multline}}
\mathchardef\mhyphen="2D
\newcommand{\bbA}{{\mathbb A}}
\newcommand{\bbB}{{\mathbb B}}
\newcommand{\bbF}{{\mathbb F}}
\newcommand{\bbG}{{\mathbb G}}
\newcommand{\bbH}{{\mathbb H}}
\newcommand{\bbN}{{\mathbb N}}
\newcommand{\bbP}{{\mathbb P}}
\newcommand{\bbQ}{{\mathbb Q}}
\newcommand{\bbR}{{\mathbb R}}
\newcommand{\bbU}{{\mathbb U}}
\newcommand{\bB}{{\bf B}}
\newcommand{\bF}{{\bf F}}
\newcommand{\bG}{{\bf G}}
\newcommand{\bZ}{{\bf Z}}
\newcommand{\frX}{{\mathfrak X}}
\newcommand{\cC}{{\mathcal C}}
\newcommand{\cD}{{\mathcal D}}
\newcommand{\cO}{{\mathcal O}}
\newcommand{\cR}{{\mathcal R}}
\newcommand{\cU}{{\mathcal U}}
\newcommand{\rig}{{\rm rig}}
\newcommand{\an}{{\rm an}}
\newcommand{\bksl}{\backslash}
\newcommand{\D}{{\Delta}}
\newcommand{\diag}{{\rm diag}}
\newcommand{\Fq}{{\bbF_q}}
\newcommand{\GL}{{\rm GL}}
\newcommand{\hra}{\hookrightarrow}
\newcommand{\id}{{\rm id}}
\newcommand{\im}{{\rm im}}
\newcommand{\Ind}{{\rm Ind}}
\newcommand{\la}{{\rm la}}
\newcommand{\lan}{\langle}
\newcommand{\lra}{\longrightarrow}
\newcommand{\midc}{{\; | \;}}
\renewcommand{\mod}{\; {\rm mod} \;}
\newcommand{\PD}{{\rm P{\Delta}}}
\newcommand{\oOmega}{{\overline{\Omega}}}
\newcommand{\oPOmega}{{\overline{\POmega}}}
\newcommand{\ori}{{\rm or}}
\newcommand{\oD}{\overline{\D}}
\newcommand{\opartial}{{\overline{\partial}}}
\newcommand{\oPD}{\overline{\PD}}
\newcommand{\POmega}{{ \rm P\Omega}}
\newcommand{\pr}{{\rm pr}}
\newcommand{\Proj}{{\rm Proj}}
\newcommand{\Qp}{{\bbQ_p}}
\newcommand{\ra}{\rightarrow}
\newcommand{\ran}{\rangle}
\newcommand\nonmini{\mathop{non\mhyphen min}}
\newcommand{\nonmin}{\rm \nonmini}
\newcommand{\res}{{\rm res}}
\newcommand{\setm}{{\; \setminus \;}}
\newcommand{\sgn}{{\rm sgn}}
\newcommand{\Stab}{{\rm Stab}}
\newcommand{\sub}{\subset}
\newcommand{\Sym}{{\rm Sym}}
\newcommand{\vpi}{{\varpi}}
\newcommand{\Z}{{\mathbb Z}}
\title{Resolutions of locally analytic principal series representations of $GL_2$}
\author{Aranya Lahiri}
\address{Department of Mathematics, Indiana University, Bloomington}
\curraddr{}
\email{lahiria@iu.edu}
\thanks{}
\begin{document}

\begin{abstract}
For a finite field extension $F/\Qp$ we associate a coefficient system attached on the Bruhat-Tits tree of $G:= \GL_2(F)$ to a locally analytic representation $V$ of $G$. This is done in analogy to the work of Schneider and Stuhler for smooth representations. This coefficient system furnishes a chain-complex which is shown, in the case of locally analytic principal series representations $V$, to be a resolution of $V$.

\end{abstract}

\maketitle
\tableofcontents

\section{Introduction}

\addtocounter{subsection}{1}

Let $F/\Qp$ be a finite field extension and $G = \bG(F)$ the group of $F$-valued points of a connected reductive group $\bG$ over $F$. In \cite{SS2} Peter Schneider and Ulrich Stuhler associated to a smooth representation $V$ of $G$ (over the complex numbers) coefficient systems on the Bruhat-Tits building $BT = BT(\bG)$ of $\bG$ (and also defined sheaves associated to $V$ on $BT$). These coefficient systems were shown to furnish resolutions of $V$. The purpose of this paper is to define analogous coefficient systems for locally analytic representations of $\GL_2(F)$ and to show that they too give rise to resolutions if the representation $V$ is a locally analytic principal series representation.  

\vskip8pt 

We quickly recall the coefficient system construction of Schneider and Stuhler for the general linear group, cf. \cite{SS1}.

\vskip8pt

For each vertex $v$ of $BT = BT(\GL_{n,F})$ let $G_v(0):= \Stab (v)\subset \GL_n(F)$, and let $G_v(k)$ be the $k^{\rm th}$ congruence subgroup of $G_v(0)$ for some $k\ge 1$. The coefficient system on $BT$  attached to a smooth representation $V$ of $\GL_n(F)$  is defined as follows

\begin{itemize}
\item To each vertex  $v \in BT$ we associate  $V_{v}:= V^{G_v(k)}$, the space of $G_v(k)$-fixed vectors of $V$.  And to each simplex $\sigma := \{v_0, v_1,\cdots, v_d\}$ we associate $V_\sigma = V^{G_\sigma(k)}$ where $G_\sigma(k) :=\lan G_{v_0}(k), \cdots, G_{v_d}(k) \ran$, is the group generated by $G_{v_i}(k)$'s. 
\vskip5pt
\item If $\sigma \subseteq \tau$ are two simplices, then $G_\sigma(k) \sub G_\tau(k)$, and there is hence an inclusion $r_\sigma^\tau: V_\tau \ra V_\sigma$. These transition maps satisfy the conditions $r^{\sigma}_{\sigma}= \id$, and , $r_\sigma^\tau \circ r_\tau^\xi= r_\sigma^\xi$ for simplices $\sigma \subseteq \tau \subseteq \xi$.
\end{itemize}

This coefficient system naturally gives rise to a  chain-complex. One  of the key results of \cite{SS1, SS2} is that this complex is exact.

\vskip8pt 

In section \ref{coeffcomplex} of this paper we associate an analogous coefficient system to a locally analytic representation $V$ of $G:= \GL_2(F)$, replacing $G_v(k)$-fixed vectors by the rigid-analytic vectors $V_{\bbG_v(k)-\an}$ for the corresponding rigid analytic group $\bbG_v(k)$ and construct a chain complex [\ref{chaincomp}] associated to it. In fact, the construction of the coefficient system and the associated chain-complex can be generalized to locally analytic representations of any (connected) $p$-adic reductive group. It is natural to ask, when is this chain complex a resolution of $V$?

\vskip8pt
 In section \ref{resolution} we show that  under some assumptions on $k$,

\begin{theorem}[see \ref{main}]
For the locally analytic principal series $V:= Ind^G_B(\chi)$, the chain-complex \ref{chaincomp} is a resolution of $V$.

\end{theorem}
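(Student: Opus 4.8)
The plan is to write the chain complex \ref{chaincomp} explicitly for the Bruhat--Tits tree $BT$ of $G=\GL_2(F)$, where it takes the augmented two--term form
\[
0\longrightarrow\bigoplus_{e}V_{e}\xrightarrow{\ \partial\ }\bigoplus_{v}V_{v}\xrightarrow{\ \epsilon\ }V\longrightarrow 0
\]
(the sums over the oriented edges $e$ and the vertices $v$ of $BT$, with $V_{\sigma}=V_{\bbG_\sigma(k)-\an}$, $\epsilon$ the sum of the inclusions and $\partial$ the difference of the two face inclusions), and then to prove exactness in three stages: surjectivity of $\epsilon$, the equality $\ker\epsilon=\im\partial$, and injectivity of $\partial$. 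The first step is to describe the spaces $V_\sigma$ concretely. Using $G/B\cong\bbP^{1}(F)$ and the Iwasawa decomposition at each vertex $v$, and assuming --- as in the hypothesis on $k$ --- that $k$ is large enough that $\chi$ is analytic of level $k$, one should identify $V_{\bbG_v(k)-\an}$ with the space of (twisted) functions on the profinite set $\bbP^{1}(F)$ that are rigid analytic on every member of the clopen partition $\cP_{v}^{(k)}$ of $\bbP^{1}(F)$ attached to $v$: the $q+1$ balls cut out by the connected components of $BT\setminus\{v\}$ ($q$ the cardinality of the residue field of $F$), each subdivided $k-1$ levels deep. For an edge $e=\{v_{0},v_{1}\}$ this yields $V_{e}=V_{v_{0}}\cap V_{v_{1}}$ inside $V$ --- functions analytic on the common refinement of $\cP_{v_{0}}^{(k)}$ and $\cP_{v_{1}}^{(k)}$ --- and in particular every transition map $V_{\tau}\to V_{\sigma}$ for $\sigma\subseteq\tau$ is the inclusion of one subspace of $V$ into another, hence injective.

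For surjectivity of $\epsilon$: given $f\in V$, compactness of $\bbP^{1}(F)$ together with the local analyticity of $f$ yields a finite clopen partition $\bbP^{1}(F)=\bigsqcup_{j}B_{j}$ into balls on each of which $f$ is rigid analytic. Taking $v_{j}$ to be the tail of the oriented edge of $BT$ that cuts out $B_{j}$, the ball $B_{j}$ --- hence also its complement --- is a union of members of $\cP_{v_{j}}^{(k)}$, so $f\cdot\mathbf 1_{B_{j}}\in V_{v_{j}}$; therefore $f=\sum_{j}f\cdot\mathbf 1_{B_{j}}$ lies in $\im\epsilon$, and $\sum_{v}V_{v}=V$.

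For the remaining two assertions I would pass to finite subtrees. Writing $BT$ as the increasing union of its balls $\cT_{n}$ of radius $n$ about a fixed vertex $o$, the augmented complex \ref{chaincomp} is the filtered colimit of the sub--complexes indexed by the $\cT_{n}$; since filtered colimits of vector spaces are exact and $\sum_{v}V_{v}=V$, it is enough to prove that for every finite subtree $\cT\ni o$ the complex
\[
0\longrightarrow\bigoplus_{e\subseteq\cT}V_{e}\xrightarrow{\ \partial\ }\bigoplus_{v\in\cT}V_{v}\longrightarrow\textstyle\sum_{v\in\cT}V_{v}\longrightarrow 0
\]
is exact, the last arrow being summation into $V$. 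I would induct on $\cT$ by peeling off outermost leaves, the case $\cT=\{o\}$ being trivial. If $v'$ is a leaf of $\cT$ with $v'\neq o$, joined to $v\in\cT$ by the edge $e'$ (which then points away from $o$), use the short exact sequence of complexes
\[
0\longrightarrow C_{\bullet}(\cT\setminus\{v'\})\longrightarrow C_{\bullet}(\cT)\longrightarrow\big[\,V_{e'}\hookrightarrow V_{v'}\,\big]\longrightarrow 0
\]
and its long exact homology sequence. Since $V_{e'}\to V_{v'}$ is injective, the $H_{1}$ of the quotient complex vanishes, so injectivity of $\partial$ propagates from $\cT\setminus\{v'\}$ to $\cT$, and exactness at $\bigoplus_{v\in\cT}V_{v}$ reduces, via the five lemma, to the single identity
\[
\Big(\textstyle\sum_{w\in\cT\setminus\{v'\}}V_{w}\Big)\cap V_{v'}=V_{e'}\qquad\text{inside }V.
\]

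I expect this intersection identity to be the main obstacle, and the place where the structure of the principal series and the choice of $k$ are genuinely used. The inclusion $\supseteq$ is clear, since $v\in\cT\setminus\{v'\}$ and $V_{e'}=V_{v}\cap V_{v'}$. For $\subseteq$, let $C$ be the ball cut out by the component of $BT\setminus\{v\}$ containing $v'$, and write $f=\sum_{w}f_{w}$ with $f_{w}\in V_{w}$ and $f\in V_{v'}$; I must show that $f$ is rigid analytic on every member $D$ of $\cP_{v}^{(k)}$, which gives $f\in V_{v}\cap V_{v'}=V_{e'}$. If $D\subseteq C$, then $D$ is small enough (as $C$ is a proper ball and $D$ a deep subdivision of it, while every $w\in\cT\setminus\{v'\}$ lies on the $v$--side of $e'$, so $\cP_{w}^{(k)}$ is coarse over $C$) that $D$ lies in a single member of each $\cP_{w}^{(k)}$, whence each $f_{w}$, and so $f$, is analytic on $D$. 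If instead $D\subseteq\bbP^{1}(F)\setminus C$, then --- because $v'$ lies one step outward from $v$, so that $\cP_{v'}^{(k)}$ is finer than $\cP_{v}^{(k)}$ on $C$ and coarser off $C$ --- $D$ lies in a single member of $\cP_{v'}^{(k)}$, and $f\in V_{v'}$ is already analytic on $D$. Turning this argument into a proof --- establishing the explicit description of $V_{\bbG_v(k)-\an}$ in the first place, including its compatibility with the twist by $\chi$ (which is where ``$k$ large'' is needed), and then running the refinement combinatorics above --- is the substantive work; the colimit formalism, the long exact sequence and the augmentation step are routine.
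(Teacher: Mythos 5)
Your proposal takes a genuinely different route from the paper's proof of exactness in the middle, though both rest on the same orbit combinatorics and both need the same ``$k$ large'' hypothesis (via the analogue of Lemma \ref{rigidanalyticfunctions}, to make the $\chi$-twist compatible with rigid analyticity). The paper (\ref{middleexact}) works at each level $n$ all at once: it identifies $\ker\partial_0$ with the span of components indexed by the \emph{non-minimal} double cosets in $\Omega_{0,k,n}$ (Lemma \ref{kernel}), matches these bijectively against the edge double cosets $\Omega_{1,k,n}$ by a counting argument (\ref{cardsame}) together with $I(\D,\chi)_{\bbG_e(k)-\an}=I(\D,\chi)_{\bbG_v(k)-\an}$ (\ref{samerigid}), and then observes that after a suitable total ordering the map $\overline\partial_1=p\circ\partial_1$ is lower triangular with $\pm\mathrm{Id}$ on the diagonal, hence invertible. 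You instead induct on finite subtrees by peeling off a leaf $v'$, use the long exact homology sequence of the short exact sequence of complexes, and reduce the whole question to the single intersection identity $\bigl(\sum_{w\in\cT\setminus\{v'\}}V_w\bigr)\cap V_{v'}=V_{e'}$, which you verify by a ball-by-ball analysis of the refinement combinatorics of the partitions $\cP_w^{(k)}$. Your observation $V_e=V_v\cap V_{v'}$ inside $V$ is implicit in the paper (it follows from \ref{samerigid}, \ref{orbit}(iii) and \ref{resrigid}) but never stated as such, and your leaf-peeling argument avoids both the minimality stratification and the cardinality count. The trade-off: the paper's argument yields an explicit matrix inverse for $\overline\partial_1$ (and a concrete description of the kernel of $\partial_0$), whereas yours is more structural, closer to the Schneider--Stuhler template, and would transport more readily to a setting where the explicit orbit census is less accessible. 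I believe your outline is correct, including the case analysis on $D\subseteq C$ versus $D\subseteq\bbP^1(F)\setminus C$ --- the key facts being that for $w$ on the $v$-side the depth-$k$ partition attached to $w$ is strictly coarser over $C$ than the one attached to $v$, and that for $w=v'$ it is strictly coarser over the complement of $C$ --- modulo the work you explicitly defer (establishing the clopen-partition description of $V_{\bbG_v(k)-\an}$, which is precisely where \ref{rigidanalyticfunctions} and the choice of $k_0(\chi)$ enter).
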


\vskip8pt

{\it Notation.} We denote by $F/\Qp$ a finite field extension of $\Qp$, by $\cO$ its ring of integers, by $\vpi$ a uniformizer, and by $\Fq$ its residue field of cardinality $q$. We will denote by $E/F$ a finite field extension (the `coefficient field'), and all locally analytic representations will be over $E$. Throughout this paper $G$ will denote the group $\GL_2(F)$. For a locally $F$-analytic manifold $X$, we denote by $C^\la(X, E)$  the the space of $E$-valued  locally  $F$-analytic functions on $X$ as defined in \cite{S-T} (in the reference, the authors use $C^\an(X, E)$ for this space).

\vskip8pt

\section{A coefficient system and complex} \label{coeffcomplex}

\subsection{The Bruhat-Tits tree and associated subgroups} 

\begin{para}{\it The building.} Recall that the semisimple Bruhat-Tits building $BT$ of $\GL_2$ over $F$ is a one-dimensional simplicial complex whose set of vertices $BT_0$ we can identify with the set of homothety classes of $\cO$-lattices $\Lambda \sub F^2$. We write $[\Lambda]$ for the homothety class of $\Lambda$. Two vertices $v$ and $v'$ are adjacent (or form an edge) if and only if there are representing lattices $\Lambda$ of $v$ and $\Lambda'$ of $v'$, such that $\vpi\Lambda \subsetneq \Lambda' \subsetneq \Lambda$. The set of edges of $BT$ will be denoted by $BT_1$, and we write $e = \{v,v'\}$ if the vertices $v$ and $v'$ form an edge. We define the distance function $d: BT_0 \times BT_0 \ra \Z_{\ge 0}$ as follows: $d(v,v') = 0$, if $v=v'$, and $d(v,v') = n$, if there is a sequence of vertices $v = v_0, v_1, v_2, \ldots, v_n = v'$ such that $\{v_i,v_{i+1}\}$ is an edge for all $i \in \{0,\ldots,n-1\}$, and $v_{i+1} \neq v_{i-1}$ for $i \in \{1, \ldots, n-1\}$. We further recall that $BT$ is a homogeneous tree of degree $q+1$, in particular the distance function is well-defined.

\vskip8pt

An {\it oriented edge} is an ordered pair $(v,v')$ of adjacent vertices. An {\it orientation} of $BT$ is a set $BT_1^\ori \sub BT_0 \times BT_0$ consisting of oriented edges, and such that the map $BT_1^\ori \ra BT_1$, $(v,v') \mapsto \{v,v'\}$, is bijective.  For a vertex $v$ and an edge $e = \{v_1,v_2\}$, or an oriented edge $e = (v_1,v_2)$, we set $d(e,v) = \max\{d(v_1,v),d(v_2,v)\}$.

\vskip8pt

The group $G = \GL_2(F)$ acts on $BT$ by $g.[\Lambda] = [g.\Lambda]$, and by $g.\{v,v'\} = \{g.v, g.v'\}$. The action of $G$ on $BT_0$ and $BT_1$ are well known to be transitive. In \ref{containment} we will need the following transitivity of $G$-action.
\end{para}

\begin{lemma}\label{n-transitive}
Let $v, v'$ and $w, w'$ be two pairs of vertices with $d(v,v') = d(w, w') = n\geq 0$, and let $v = v_0, v_1, v_2, \ldots, v_n = v'$ and $w = w_0, w_1, w_2, \ldots, w_n = w'$ be the unique paths connecting $v$ with $v'$ and $w$ with $w'$, respectively. Then there exists $g \in G$ such that for all $i \in \{0, \ldots, n\}$ one has $g.v_i= w_i$, i.e., the action of $G$ on paths of length $n \ge 0$ is transitive. 
\end{lemma}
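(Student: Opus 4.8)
The plan is to reduce everything to the well-known transitivity of the $G$-action on vertices and on (unordered, hence also ordered) edges, and then to propagate this along the path by induction on $n$. First I would dispose of the base cases: for $n = 0$ the claim is transitivity on vertices, and for $n = 1$ it is transitivity on oriented edges — both recalled in the paragraph preceding the lemma. So assume $n \ge 2$ and that the statement holds for paths of length $n - 1$.

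Next I would apply the induction hypothesis to the truncated paths $v_0, \ldots, v_{n-1}$ and $w_0, \ldots, w_{n-1}$, obtaining $g_0 \in G$ with $g_0.v_i = w_i$ for all $i \in \{0, \ldots, n-1\}$. Replacing $v'$ by $g_0.v'$, we are reduced to the case where $v_i = w_i$ already for $i \le n-1$, and we must find $g \in G$ fixing $v_0, \ldots, v_{n-1}$ and sending $v_n$ to $w_n$. Here I would use the key geometric fact that, since the path is non-backtracking ($v_{n} \ne v_{n-2}$ and $w_n \ne v_{n-2}$) and $BT$ is a homogeneous tree of degree $q+1$, both $v_n$ and $w_n$ are neighbours of $v_{n-1}$ distinct from $v_{n-2}$; there are exactly $q$ such neighbours. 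So it suffices to show that the stabilizer in $G$ of the path $v_0, \ldots, v_{n-1}$ (equivalently, the pointwise stabilizer of these $n$ vertices) acts transitively on the set of $q$ neighbours of $v_{n-1}$ other than $v_{n-2}$.

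To prove that last transitivity I would pass to explicit lattice representatives, normalizing via the transitivity already used so that $v_{n-1} = [\cO \oplus \cO]$ and $v_{n-2} = [\cO \oplus \vpi\cO]$; the $q$ remaining neighbours of $v_{n-1}$ are then the classes $[\vpi\cO \oplus \cO + \cO\cdot(\vpi a, 1)]$ wait — more simply, the lattices $\Lambda$ with $\vpi(\cO\oplus\cO) \subsetneq \Lambda \subsetneq \cO\oplus\cO$ correspond bijectively to the lines in $\mathbb{F}_q^2$, and $v_{n-2}$ corresponds to one of them; the subgroup of $\GL_2(\cO)$ fixing the line of $v_{n-2}$ (a Borel-type subgroup of $\GL_2(\mathbb{F}_q)$ lifted to $\cO$) acts transitively on the remaining $q$ lines, and such elements automatically fix $v_{n-1}$ and fix the whole segment from $v_{n-1}$ back toward $v_0$ — this is the point requiring care. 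Concretely I would check that an element stabilizing $v_{n-1}$ and $v_{n-2}$ also stabilizes each earlier $v_i$: since the path is the unique geodesic, any $g$ fixing $v_{n-1}$ and $v_{n-2}$ permutes the connected components of $BT \setminus \{v_{n-1}\}$ while fixing the component containing $v_{n-2}$, hence fixes the geodesic from $v_{n-2}$ to $v_0$ setwise; combined with fixing its endpoint $v_{n-2}$ and its length, it fixes each $v_i$ pointwise. Composing this $g$ with $g_0$ gives the desired element.

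The main obstacle I anticipate is the last paragraph: cleanly justifying that the pointwise stabilizer of the initial segment $v_0, \ldots, v_{n-1}$ still acts transitively on the admissible continuations. One has to be a little careful that an element fixing just the last two vertices of the segment need not fix the earlier ones a priori — it is the uniqueness of geodesics in a tree that rescues this — and to make the lattice bookkeeping consistent along the whole path. An alternative that sidesteps some of this: argue directly that $G$ acts transitively on \emph{ordered} pairs of adjacent vertices together with a choice of third neighbour (a ``length-two path''), which is again just $\GL_2(\mathbb{F}_q)$ acting on flags/lines, and then iterate; but the inductive formulation above is cleanest to write down.
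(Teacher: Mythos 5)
Your skeleton---induction on $n$, base cases from vertex/edge transitivity, then reduce to showing a stabilizer acts transitively on the $q$ possible continuations at $v_{n-1}$---is the same as the paper's. But the argument you give for that key step is wrong. You assert that an element $g$ stabilizing $v_{n-1}$ and $v_{n-2}$ automatically stabilizes each earlier $v_i$, on the grounds that $g$ preserves the component of $BT \setminus \{v_{n-1}\}$ containing $v_{n-2}$ and hence fixes the geodesic from $v_{n-2}$ to $v_0$ setwise. That inference fails: $g$ need not fix the far endpoint $v_0$, and preserving a component setwise says nothing about a particular geodesic inside it. Concretely, in your normalization $v_{n-1} = [\cO \oplus \cO]$, $v_{n-2} = [\cO \oplus (\vpi)]$, the stabilizer of $(v_{n-1},v_{n-2})$ in $\GL_2(\cO)$ is the Iwahori subgroup of matrices lower-triangular mod $\vpi$; an element like $\begin{bmatrix}1 & 0 \\ c & 1\end{bmatrix}$ with $c \in \cO^\times$ lies in it and is exactly the kind of thing you want for moving the forward continuation $v_n$, but it does not fix $v_{n-3} = [\cO\oplus(\vpi^2)]$ when $n\ge 3$. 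So the Borel/Iwahori action moves the path at both ends, not only the one you want.

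What you actually need is that the pointwise stabilizer of the \emph{entire} initial segment $v_0,\ldots,v_{n-1}$ still acts transitively on the $q$ admissible choices of $v_n$, and this is a strictly deeper congruence subgroup than the one you used. The paper's proof supplies exactly this: after normalizing the whole segment to $v_i = [(\vpi^i)\oplus\cO]$, it exhibits explicit unipotents $\begin{bmatrix}1 & \alpha \\ 0 & 1\end{bmatrix}$ with $\alpha\in(\vpi)^{n-1}$. The congruence condition $\alpha\in(\vpi)^{n-1}$ is precisely what makes these fix each $v_i$ for $0\le i\le n-1$ (note $(\alpha,1)$ lies in $(\vpi^i)\oplus\cO$ because $\alpha/\vpi^i\in\cO$), while letting $\alpha$ range over $(\vpi)^{n-1}/(\vpi)^n$ produces the $q$ translates needed to hit every possible $v_n$. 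Replacing your Borel-stabilizer claim with such a congruence-unipotent computation repairs the proof; the rest of your reduction is fine, and the alternative you sketch (iterating transitivity on length-two paths) would run into the same issue and does not sidestep it.
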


\begin{proof} The proof proceeds by induction on $n$. The assertion is true in the case when $n \le 1$ because of the transitivity of the $G$-action on $BT_0$ and $BT_1$, as mentioned above. We thus assume that $n \ge 2$ in the following. Furthermore, it is enough to prove the assertion under the assumption that $v_0 = [\cO\oplus \cO]$ and $v_i= [(\vpi^i) \oplus \cO]$ for $0 \le i \le n$. By induction, there this $h \in G$ such that $h.v_i = w_i$ for $0 \le i \le n-1$. Then $h^{-1}.w_n$ is adjacent to $v_{n-1} = h^{-1}.w_{n-1}$ and is different from $v_{n-2}$. The vertices which have this property are of the form $v_{\alpha,n}:=[\lan (\vpi^n,0), ([\alpha],1)\ran]$, the homothety class of  $\cO$-lattice generated by $(\vpi^n,0)$ and  $([\alpha],1)$ where $\alpha \in (\vpi)^{n-1}/(\vpi)^n$. Then $h' := \begin{bmatrix}1 & \alpha \\ 0 & 1\end{bmatrix}$ takes $v_{\alpha,n}$ to $v_n$ while fixing $v_i$ for $0\le i \le n-1$. The element $g = h(h')^{-1}$ has then the desired property. 
\end{proof}

\begin{para}{\it The groups $G_\sigma(k)$.}  Given a vertex $v = [\Lambda]$ we set $G_v(0) = \Stab_G(\Lambda) = \{g \in G \midc g.\Lambda = \Lambda\}$, and for a positive integer $k$ we put

$$G_v(k) = \{g \in G_v(0) \midc \forall\, x \in \Lambda: \; g.x \equiv x  \mod \vpi^k\Lambda\}$$

\vskip8pt

Given an edge $e = \{v,v'\}$ (or an oriented edge $e = (v,v')$) and $k \in \Z_{>0}$ we let $G_e(k) = \lan G_v(k), G_{v'}(k) \ran$ be the subgroup of $G$ generated by $G_v(k)$ and $G_{v'}(k)$. As $G$ acts transitively on $BT_0$ and $BT_1$, $G$ acts transitively on $\{G_v(k) \midc v \in BT_0\}$ as well as on $\{G_e(k) \midc e \in BT_1\}$ by conjugation. 
\end{para}

For later purposes it will be useful to describe some of the groups $G_v(k)$ and $G_e(k)$ explicitly. 

\begin{lemma}\label{simplexgroups} Fix $k \in \Z_{>0}$. If $v_0 = [\cO \oplus \cO]$, then

\begin{numequation}\label{Gv}
G_{v_0}(k) = \left\{\begin{bmatrix} 1+ \vpi^k a & \vpi^k  b\\ \vpi^k c & 1+\vpi^k d\end{bmatrix} \Big| \; a,b,c,d \in  \cO \right\} \;,
\end{numequation}

and if $e = \{v_0,v_1\}$ with $v_1 = [(\vpi) \oplus \cO]$, then 

\begin{numequation}\label{Ge}
G_e(k) =\left \{\begin{bmatrix} 1 + \vpi^k a & \vpi^{k} b \\ \vpi^{k-1} c & 1+ \vpi^k d \end{bmatrix}\Big| \; a, b, c, d \in \cO \right\} \;.
\end{numequation}
\end{lemma}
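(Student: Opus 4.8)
The plan is to compute both groups directly from the definitions, working with the standard lattices.

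First I would handle \eqref{Gv}. Here $\Lambda = \cO \oplus \cO = \cO^2$, so $G_{v_0}(0) = \Stab_G(\cO^2) = \GL_2(\cO)$, since the matrices preserving the standard lattice are exactly those with entries in $\cO$ and unit determinant. The congruence condition ``$g.x \equiv x \mod \vpi^k \Lambda$ for all $x \in \Lambda$'' applied to the standard basis vectors $e_1, e_2$ says precisely that every column of $g$ is congruent to the corresponding standard basis vector modulo $\vpi^k \cO^2$; equivalently $g \equiv I \mod \vpi^k M_2(\cO)$. Thus $g = I + \vpi^k M$ with $M \in M_2(\cO)$, which is the asserted description. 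One should note that any such $g$ is automatically in $\GL_2(\cO)$ (its reduction mod $\vpi$ is the identity, hence invertible, so $\det g \in \cO^\times$), so no extra constraint on $a,b,c,d$ is needed — they range freely over $\cO$.

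Next I would handle \eqref{Ge}. By definition $G_e(k) = \langle G_{v_0}(k), G_{v_1}(k)\rangle$, and $\eqref{Gv}$ already gives $G_{v_0}(k)$. For $G_{v_1}(k)$ I would use $v_1 = [(\vpi)\oplus\cO]$, i.e. $\Lambda' = \vpi\cO \oplus \cO$; conjugating by $t = \mathrm{diag}(\vpi,1)$ carries $\cO^2$ to $\Lambda'$, so $G_{v_1}(k) = t\,G_{v_0}(k)\,t^{-1}$, which a direct computation shows to be
\[
G_{v_1}(k) = \left\{\begin{bmatrix} 1+\vpi^k a & \vpi^{k+1} b \\ \vpi^{k-1} c & 1+\vpi^k d\end{bmatrix}\Big|\; a,b,c,d\in\cO\right\}.
\]
Then one checks that the set on the right-hand side of \eqref{Ge}, call it $K$, is a subgroup of $\GL_2(\cO)$ containing both $G_{v_0}(k)$ and $G_{v_1}(k)$, so $G_e(k) \subseteq K$; and conversely that $K$ is generated by these two subgroups, so $K \subseteq G_e(k)$. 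That $K$ is closed under multiplication and inversion is a short matrix computation (the off-diagonal lower-left entry has valuation $\geq k-1$ while everything else stays in the ``level $k$'' range, and these valuations are preserved under the group operations), and the containments $G_{v_0}(k), G_{v_1}(k) \subseteq K$ are immediate from the explicit forms. For the reverse containment, given $g \in K$ with lower-left entry $\vpi^{k-1} c$, I would write $g$ as a product of an element of $G_{v_1}(k)$ (absorbing the $\vpi^{k-1}c$ term, using that $G_{v_1}(k)$ contains all $\begin{bmatrix}1 & 0\\ \vpi^{k-1}c & 1\end{bmatrix}$ for $c\in\cO$) and an element of $G_{v_0}(k)$.

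The main obstacle — such as it is — is the reverse containment $K \subseteq G_e(k)$: one must actually exhibit a factorization rather than just observe closure properties. A clean way is to first reduce the lower-left entry of an arbitrary $g \in K$ to the level-$k$ range by left-multiplying by a suitable element of $G_{v_1}(k)$, landing in $G_{v_0}(k)$ by \eqref{Gv}. The only mild subtlety is checking that the intermediate matrix genuinely lies in $G_{v_0}(k)$, i.e. that subtracting off the lower-left contribution does not spoil the other three entries' congruence to the identity mod $\vpi^k$; this follows because the correcting factor from $G_{v_1}(k)$ differs from the identity only in the lower-left entry. Everything else is routine bookkeeping with valuations.
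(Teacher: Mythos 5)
Your proposal is correct and follows essentially the same route as the paper: direct computation for $G_{v_0}(k)$, conjugation by $\diag(\vpi,1)$ for $G_{v_1}(k)$, and an explicit factorization of a general element of the right-hand side of \ref{Ge} as a product of an element of $G_{v_1}(k)$ and an element of $G_{v_0}(k)$. The paper factors by pulling out a lower-triangular $G_{v_1}(k)$ factor on the left and an upper-triangular $G_{v_0}(k)$ factor on the right, while you left-multiply by a unipotent element of $G_{v_1}(k)$ to clear the lower-left entry into the level-$k$ range; these are interchangeable bookkeeping choices, and your additional check that $H$ is a subgroup plays the same role as the paper's observation that $G_{v_0}(k)G_{v_1}(k) = G_{v_1}(k)G_{v_0}(k)$.
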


\begin{proof} The first assertion is well-known and straightforward to verify. For the second assertion we note that $v_1 = \diag(\vpi,1).v_0$ and therefore 

$$G_{v_1}(k) = \diag(\vpi,1)G_{v_0}(k)\diag(\vpi,1)^{-1} = \left\{\begin{bmatrix} 1+ \vpi^k a & \vpi^{k+1}  b\\ \vpi^{k-1} c & 1+\vpi^k d\end{bmatrix} \Big| \; a,b,c,d \in  \cO \right\} \;,$$

By definition, $G_e(k)= \lan G_{v_0}(k), G_{v_1}(k)\ran$. Let $H$ be the group on the right hand side of \ref{Ge}. An easy computation shows that 

\vskip8pt

(i) $G_{v_0}(k). G_{v_1}(k) = G_{v_1}(k). G_{v_0}(k)$, and hence $G_e(k) = G_{v_0}(k). G_{v_1}(k)$, and that 

\vskip5pt

(ii) $G_{v_0}(k). G_{v_1}(k) \sub H$.

\vskip8pt

Now we observe that for any  $a, b, c, d \in \cO_F$ we have 

\[\begin{bmatrix} 1 + \vpi^k a & \vpi^{k} b \\ \vpi^{k-1} c & 1+ \vpi^k d \end{bmatrix}\; =  \begin{bmatrix} 1 + \vpi^k a & 0 \\ \vpi^{k-1}c & 1 \end{bmatrix}. \begin{bmatrix} 1 & \vpi^k \frac{b}{1+\vpi^k a} \\ 0 & 1+ \vpi^k \Big(d-\vpi^{k-1}\frac{cb}{1+\vpi^ka}\Big) \end{bmatrix}\]

This finishes the proof.
\end{proof}

\begin{para}{\it A Coefficient System on $BT$.}\label{locancoeff} 
We choose $k \in \Z_{>0}$ such that for any simplex (edge or vertex) $\sigma$ of $BT$ the group $G_\sigma(k)$ as defined above is a  uniform pro-$p$ group \cite{DDMS}. Under this assumption we can associate to $G_\sigma(k)$ a rigid analytic subgroup $\bbG_\sigma(k)$ of the analytification $\GL_2^\rig$ of the algebraic group $\GL_2$ over $F$. See \cite[3.5]{La} for a description of this process. 

\vskip8pt

Given a locally analytic representation $V$  of $G$, we define a coefficient system $\cC^{(k)}_V$ on $BT$ as follows.

\vskip8pt

\begin{itemize}
\item To each simplex $\sigma \sub BT$ we associate  $V_{\sigma}:= V_{\bbG_\sigma(k)-\an}$, the space of $\bbG_\sigma(k)$-analytic vectors\footnote{See \cite[3.3.13]{Em17} for the definition of analytic vectors of a representation with respect to a good analytic subgroup.} of $V$ . 

\vskip5pt

\item If $\sigma \subseteq \tau$ are two simplices, then $G_\sigma(k) \sub G_\tau(k)$, and there is hence an inclusion $r_\sigma^\tau: V_\tau \ra V_\sigma$. These transition maps satisfy the conditions $r^{\sigma}_{\sigma}= \id$, and therefore, $r_\sigma^\tau \circ r_\tau^\xi= r_\sigma^\xi$ for simplices $\sigma \subseteq \tau \subseteq \xi$.

\end{itemize}

\end{para}

\subsection{The chain complex associated to a coefficient system on \texorpdfstring{$BT$}{}}

We associate to $\cC^{(k)}_V$ the following complex

\begin{numequation}\label{chaincomp}
0\ra \bigoplus_{e \in BT_1^\ori} V_e \stackrel{\partial_1}{\longrightarrow} \bigoplus_{v\in BT_0} V_v \stackrel{\partial_0}{\longrightarrow} V \ra 0
\end{numequation}

Where there maps are defined as follows: given an oriented edge $e= (v_1, v_2) \in BT_1^\ori$ the map $\partial_1|_{V_e}$ is defined by 

\[\begin{array}{ccc}
    \partial_1|_{V_e} : V_e  & \lra & \bigoplus_{v\in BT_0} V_v\\ f & \longmapsto & (f_v)_v \;,
\end{array}\]

\vskip8pt

where $f_{v_1} = f$, $f_{v_2} = -f$, and $f_v = 0$ for all $v \not \in \{v_1,v_2\}$. The map $\partial_0$ is defined by 

\begin{align*}
    \partial_0 : \bigoplus_{v\in BT_0} V_v &\longrightarrow V \\ (f_v)_v &\ra \sum_v f_v \;,
\end{align*}

where the sum is taken inside $V$ using the vector space embeddings $V_v \hookrightarrow V$.

\section{Locally analytic principal series representations}\label{resolution}

\subsection{Locally analytic induction} 

\begin{para}\label{inducedrep} Let $T  \subset G$ be the the maximal torus comprising of diagonal matrices and B with, $T\subset B \subset G$ be the standard Borel subgroup of upper triangular matrices. We also fix once and for all a finite extension $E/F$. For a locally $F$-analytic character $\chi: T \ra E^\times$ we consider the locally analytic principal series representation

\[\begin{array}{rcl} V & := & \Ind^G_B(\chi) \\
&&\\
& = & \Big\{f: G \ra E \mbox{ locally $F$-analytic} \Big |\, \forall \, g \in G \; \forall\; b \in B: \; f(bg) = \chi(b)f(g)\Big\} \;.
\end{array}\]

\vskip8pt

The action of $G$ on $V$ is given by $g.f(x)= f(x. g^{-1})$. We will consider $\Ind^G_B(\chi)$ as a topological vector space as follows. Set $G_0 = \GL_2(\cO)$ and $B_0 = B \cap G_0$. Then $G = B \cdot G_0$ and the canonical map of quotients $B_0 \bksl G_0 \ra B \bksl G$ is an isomorphism of locally $F$-analytic manifolds. Therefore, restricting locally $F$-analytic functions from $G$ to $G_0$ gives an isomorphism of vector spaces $\Ind^G_B(\chi) \ra \Ind^{G_0}_{B_0}(\chi)$. Since $G_0$ is compact, the space $C^\la(G_0,E)$ of $E$-valued locally $F$-analytic functions on $G_0$ naturally carries the structure of an $E$-vector space of compact type \cite[Lemma 2.1]{S-T}. We equip $\Ind^G_B(\chi)$ with the structure of a locally convex $E$-vector space so that the map $\Ind^G_B(\chi) \ra \Ind^{G_0}_{B_0}(\chi)$ becomes an isomorphism of topological vectors spaces. 
\end{para}

\begin{para} For a simplex $\sigma \in BT$ (or an oriented edge $e \in BT_1^{\ori}$), let 

\[\Omega_{\sigma, k}= B\backslash G/G_\sigma(k)\]

\vskip8pt

be the set of $B$-$G_\sigma(k)$ double cosets in $G$, which is finite because $B \bksl G$ is compact and $G_\sigma(k)$ open in $G$. As $G_\sigma(k)$ is an open subgroup, so is any double coset $\D \in \Omega_{\sigma,k}$. Given  $\D \in \Omega_{\sigma,k}$, we set 

\[I(\D, \chi)= \Big\{\D \stackrel{f}{\lra} E \;\Big| \; f \mbox{ is  loc. $F$-analytic}, \, \forall\; b \in B, x \in \D, f(bx)= \chi(b)f(x)\Big\}\]

\vskip8pt

Note that by extending functions in $I(\D,\chi)$ by zero outside $\D$, we obtain an embedding $I(\D,\chi) \hra V$, and the image of this map, which we will henceforth identify with $I(\D,\chi)$, is stable under the action of $G_\sigma(k)$ on $V$. Because $G = \coprod_{\D \in \Omega_{\sigma,k}} \D$, we have 

\[V_{\bbG_{\sigma}(k)-\an} = \oplus_{\D \in \Omega_{\sigma, k}}I(\D,\chi)_{\bbG_{\sigma}(k)-\an} \;.\]

\end{para}

The main result of this article is

\begin{theorem}\label{main}
There exists an integer $k_0 \ge 1$ such that for all $k\ge k_0$ the chain complex \ref{chaincomp} is a resolution when applied to $V= \Ind^G_B(\chi)$.
\end{theorem}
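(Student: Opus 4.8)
The strategy is to reduce the exactness of the global complex \ref{chaincomp} to a local, combinatorial statement on the tree, exactly as in the smooth case of Schneider--Stuhler, but controlling the analytic structure. The key point is that the coefficient system $\cC_V^{(k)}$ is, in a suitable sense, \emph{induced} from data on bounded subtrees. Concretely, using the decomposition $V_{\bbG_\sigma(k)-\an} = \oplus_{\D \in \Omega_{\sigma,k}} I(\D,\chi)_{\bbG_\sigma(k)-\an}$ and the transitivity results (Lemma \ref{n-transitive} and the transitivity of $G$ on $BT_0$, $BT_1$), I would first show that the whole complex decomposes, compatibly with the differentials $\partial_0,\partial_1$, into a direct sum indexed by $B$-orbits (equivalently, by points of $B\backslash G$, i.e. of $\bbP^1(F)$), and that each summand is of the form $C^\la(\text{-},E) \hot W$ for a fixed finite-dimensional space $W$, where the tree-combinatorics is captured by a complex of $\cO$- or $E$-modules attached to the congruence filtration. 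This is where the choice of $k_0$ enters: one needs $k$ large enough that (a) every $G_\sigma(k)$ is uniform pro-$p$ so the analytic groups $\bbG_\sigma(k)$ exist and the functors $(-)_{\bbG_\sigma(k)-\an}$ are well-behaved exact functors, and (b) the local geometry of the double-coset spaces $\Omega_{\sigma,k}$ stabilizes as $\sigma$ ranges over simplices, so the summands genuinely have the product form above.

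Once the complex is written as $C^\la(-,E)\hot(\text{combinatorial complex})$, exactness at the middle and right terms follows from two inputs. For surjectivity of $\partial_0$: given $f\in V$, since $V = \bigcup_v V_{\bbG_v(k)-\an}$ (every locally analytic vector is $\bbG_v(k)$-analytic for $v$ in some neighborhood, hence at least one), one can write $f$ as a finite sum $\sum f_v$ with $f_v\in V_v$ by a partition-of-unity / support argument on $\bbP^1(F)$ — decomposing the compact space $B\backslash G$ according to which $G_v(k)$-double-cosets a given locally analytic function is supported on, and then realizing each piece in a single $V_v$. For exactness at $\bigoplus_v V_v$: an element $(f_v)_v$ in $\ker\partial_0$ satisfies $\sum_v f_v = 0$ in $V$; one traces back along the tree (which is contractible, being a tree) to write $(f_v)_v$ as $\partial_1$ of an edge-indexed family, using at each step that for an edge $e=\{v,v'\}$ the natural map $V_e \to V_v$ and $V_e\to V_{v'}$ have images cutting out exactly the compatible pairs — this is the local exactness of the Čech-type complex of the coefficient system on a tree, which for the smooth case is \cite{SS1, SS2} and which transfers here because the analytic-vectors functor preserves the relevant short exact sequences once $k\ge k_0$. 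Injectivity of $\partial_1$ is immediate from the definition (each basis edge maps to a pair of nonzero vectors supported at its two distinct endpoints).

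The one genuine subtlety — and the step I expect to be the main obstacle — is verifying that the functor $V \mapsto V_{\bbG_\sigma(k)-\an}$ interacts correctly with the tree combinatorics: in the smooth setting one uses that $V^{G_\sigma(k)} = V^{G_v(k)} \cap V^{G_{v'}(k)}$ for $e = \{v,v'\}$, because $G_e(k) = \langle G_v(k),G_{v'}(k)\rangle$ and taking invariants turns "generated by" into "intersection". For analytic vectors the analogue — $I(\D,\chi)_{\bbG_e(k)-\an}$ equals the intersection inside $V$ of the images of $I(\D,\chi)_{\bbG_v(k)-\an}$ and $I(\D,\chi)_{\bbG_{v'}(k)-\an}$ — is not formal; it requires knowing that a vector analytic for both $\bbG_v(k)$ and $\bbG_{v'}(k)$ is already analytic for the (larger) group $\bbG_e(k)$ they generate. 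Here I would use the explicit matrix description of $G_e(k)$ from Lemma \ref{simplexgroups}, its Iwahori-type factorization, and a careful analysis of convergence radii of the relevant power-series expansions on $I(\D,\chi)$, shrinking $k_0$ if necessary so that the analyticity radii force the join. Granting this local compatibility, the rest is the standard tree-homology argument for an induced (acyclic) coefficient system, and the theorem follows.
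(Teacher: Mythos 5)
Your sketches of the injectivity of $\partial_1$ and the surjectivity of $\partial_0$ (by partitioning $\bbP^1(F)$ into discs on which $f$ is rigid analytic and locating each disc as a $G_v(k)$-orbit for a suitable $v$) are essentially what the paper does, and both are fine modulo detail. The gap is in the exactness at $\bigoplus_v V_v$, and there it is a real one.

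First, the claimed decomposition of the complex into ``a direct sum indexed by $B$-orbits (equivalently, by points of $B\backslash G$, i.e.\ of $\bbP^1(F)$)'' with each summand of the form $C^\la(-,E)\hot W$ is not meaningful: $\bbP^1(F)$ is uncountable, so no such direct sum exists, and the orbit decompositions $V_\sigma = \oplus_{\D\in\Omega_{\sigma,k}} I(\D,\chi)_{\bbG_\sigma(k)-\an}$ do not line up across simplices --- the orbit sets $\Omega_{\sigma,k}$ both change cardinality and repartition $\bbP^1(F)$ as $\sigma$ varies (see Proposition~\ref{orbit}), so there is no common index set you can split along. Second, even granting the local compatibility you flag as the key subtlety --- namely that for an edge $e=\{v,v'\}$ one has $V_e = V_v\cap V_{v'}$ inside $V$, which is indeed a consequence of Lemma~\ref{samerigid} together with Proposition~\ref{orbit}(iii) --- this alone does not yield middle exactness. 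If $(f_v)_v\in\ker\partial_0$ is supported on the ball $BT_{0,n}$ and $v$ is a vertex at distance $n$ with $f_v\neq 0$, the ``trace back along the tree'' recursion would set $g_e := \pm f_v$ for the edge $e=\{v,v'\}$ toward $v_0$; but for $g_e\in V_e = V_v\cap V_{v'}$ one must know $f_v\in V_{v'}$, i.e.\ that $f_v$ is $\bbG_{v'}(k)$-analytic, and the relation $\sum_w f_w = 0$ gives you no direct handle on this. Contractibility of the tree plus the intersection property is not enough: you still need to know that the kernel components are \emph{determined} by analytic restriction data in a way compatible with the coefficient system.

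The paper's actual route through this is genuinely different and supplies exactly the missing ingredient. It partially orders the orbits $\oOmega_{0,k,n}$ by inclusion, singles out the minimal ones $\Omega_{0,k,n}^{\min}$, and proves (Lemma~\ref{kernel}, using the restriction compatibility of Lemma~\ref{resrigid}) that projecting away from the minimal components gives an isomorphism $\ker\partial_0 \car C_{0,n,k}^{\nonmin}$. It then identifies $C_{1,n,k}^{\ori}$ with $C_{0,n,k}^{\nonmin}$ using the equality $I(\D,\chi)_{\bbG_e(k)-\an} = I(\D,\chi)_{\bbG_v(k)-\an}$ of Lemma~\ref{samerigid} and the counting identity $|\Omega_{1,k,n}| = |\Omega_{0,k,n}\setminus\Omega_{0,k,n}^{\min}|$ of Proposition~\ref{cardsame}, and finally shows (Proposition~\ref{middleexact}) that the composite $\opartial_1 = p\circ\partial_1$ is invertible because, after refining the partial order to a total order, its matrix is lower triangular with $\pm\id$ on the diagonal. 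That triangularity argument is the engine of the whole proof, and it is exactly what your proposal is missing.
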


The rest of the article is dedicated to the proof of \ref{main}. A crucial role in our proof will be played by the following sets of vertices and edges:

\[\begin{array}{lclclc}
    BT_{0,n} & = &\Big\{v\in BT_0 \; \Big| \; d(v, v_0)\leq n\Big\} \;,\\
    &&\\
    BT^\ori_{1,n} & = & \Big\{e=(v,v') \in BT^\ori_1 \; \Big| \; d(e,v_0) \leq n \Big\}
\end{array}\]
 
 \vskip8pt
where $v_0 = [\cO \oplus \cO]$ is as above.

\subsection{Injectivity of \texorpdfstring{$\partial_1$}{} }

\begin{prop} \label{injective}
The map $\partial_1: \bigoplus_{e\in BT_1^\ori}V_e \longrightarrow \bigoplus_{v\in BT_0}V_v $ is injective.
\end{prop}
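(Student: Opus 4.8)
The plan is to mimic the classical argument of Schneider--Stuhler: the injectivity of $\partial_1$ on the locally analytic chain complex follows from the fact that the Bruhat--Tits tree has no cycles, so that a nonzero element of $\bigoplus_{e}V_e$ whose boundary vanishes would force an infinite ``propagation'' of nonzero components, contradicting the finiteness of the support. More precisely, suppose $\xi = (\xi_e)_e \in \bigoplus_{e\in BT_1^\ori}V_e$ is a nonzero element of the kernel. Its support is a nonempty finite set $S \subset BT_1^\ori$ of edges; let $E \subset BT_1$ be the corresponding set of unoriented edges and let $\Gamma$ be the finite subgraph of $BT$ consisting of the edges in $E$ together with their endpoints. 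Since $BT$ is a tree, $\Gamma$ is a finite forest, hence has a leaf: a vertex $v$ incident to exactly one edge $e_0 \in E$.

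The key local computation is then to look at the $v$-component of $\partial_1(\xi)$, which lies in $V_v = V_{\bbG_v(k)-\an}$. By the definition of $\partial_1$, only the edges of $S$ incident to $v$ contribute to $(\partial_1\xi)_v$, and since $v$ is a leaf of $\Gamma$ the only such edge is $e_0$. Hence $(\partial_1\xi)_v = \pm\, r_{v}^{e_0}(\xi_{e_0})$, where $r_v^{e_0}: V_{e_0} \to V_v$ is the transition map of the coefficient system, which by construction is the natural inclusion $V_{\bbG_{e_0}(k)-\an} \hookrightarrow V_{\bbG_{v}(k)-\an}$ coming from $G_v(k) \subset G_{e_0}(k)$. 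The crucial point I would isolate and prove is that \emph{this transition map is injective}: an element of $V$ that is $\bbG_{e_0}(k)$-analytic and happens to be nonzero stays nonzero when regarded merely as a $\bbG_v(k)$-analytic vector. This is clear because both $V_{\bbG_{e_0}(k)-\an}$ and $V_{\bbG_v(k)-\an}$ inject into the underlying vector space $V$ (analytic vectors form a subspace of the representation), and the transition map is compatible with these inclusions; so $r_v^{e_0}$ is the restriction of the identity on $V$ and is injective. Therefore $(\partial_1\xi)_v = 0$ forces $\xi_{e_0} = 0$, contradicting $e_0 \in S$.

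Let me record the steps in order: (1) assume $0 \neq \xi \in \ker\partial_1$, set $S = \supp(\xi)$, a finite nonempty set of oriented edges; (2) form the finite subgraph $\Gamma \subset BT$ spanned by the underlying edges and invoke acyclicity of the tree (Lemma on $BT$ being a homogeneous tree) to produce a leaf vertex $v$ with a unique incident edge $e_0$ in $\Gamma$; (3) observe that $(\partial_1\xi)_v$ is, up to sign, the image of $\xi_{e_0}$ under the transition map $r_v^{e_0}$; (4) note that $r_v^{e_0}$ is injective, being the restriction to the subspace of analytic vectors of the identity map on $V$ (equivalently, both sides embed into $V$ compatibly); (5) conclude $\xi_{e_0} = 0$, contradicting $e_0 \in S$. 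Hence $\ker\partial_1 = 0$.

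I do not expect a serious obstacle here; this is the easy half of showing the complex is a resolution, and in the smooth setting it is essentially immediate. The only point worth a sentence of care is step (4): one must make sure that the identification $V_e \hookrightarrow V$ (used implicitly when writing the complex) and the transition maps $r_\sigma^\tau$ are the compatible ones, i.e. that the coefficient system $\cC_V^{(k)}$ really has injective transition maps. This follows directly from the definition in \ref{locancoeff}, where $V_\sigma := V_{\bbG_\sigma(k)-\an}$ is literally a subspace of $V$ and $r_\sigma^\tau$ is inclusion of subspaces; so the verification is formal. (If one wanted to be completely scrupulous one could also note that the graph $\Gamma$ is finite precisely because the support of a finite direct sum element is finite, which is what makes the leaf-extraction legitimate.)
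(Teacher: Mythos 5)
Your proof is correct and is essentially the paper's argument: both exploit the tree structure to locate a vertex that meets exactly one edge in the support, at which the boundary of the putative kernel element visibly fails to vanish. The only cosmetic difference is how that vertex is found --- you extract a leaf of the finite forest spanned by the support, whereas the paper takes an edge of maximal distance from the base vertex $v_0$ and uses its far endpoint, which is of course such a leaf.
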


\begin{proof}
Let  $f = (f_e)_e \neq 0$ be supported on $BT_{1,n}^\ori$ but not on $BT_{1,n-1}^\ori$ for some $n\geq 1$ ($BT_{1,0}^\ori$ is the empty set), i.e., there is $e = (v_1,v_2) \in BT^\ori_1$ such that $d(e,v_0) = n$ and $f_e \neq 0$, and $f_{e'} = 0$ for all edges $e'$ with $d(e',v_0) > n$. If then $i \in \{1,2\}$ is such that $d(v_i,v_0) = n$, we have $\partial_1(f)_{v_i} = \pm f_e \neq 0$.
\end{proof}

\subsection{The action of \texorpdfstring{$G$}{} on \texorpdfstring{$\bbP^1(F)$}{}}\label{Gaction}

In the rest of the paper we will let $\infty$ be a symbol different from all elements in $F$ and put $\bbP^1(F) = F \cup \{\infty\}$. We equip $\bbP^1(F)$ with an action from the right by $G$ via M\"obius transformations. Explicitly, the action of $g= \begin{bmatrix} a & b \\c & d\end{bmatrix} \in G$ on $z \in \bbP^1(F)$ is given by 

\begin{numequation}\label{action} z.g = \left\{\begin{array}{ccl} \frac{az + c}{bz +d} &, & z \neq \infty \\
\frac{a}{b} & ,&  z = \infty \end{array}\right.
\end{numequation}

where $\frac{az + c}{bz +d}$ (resp. $\frac{a}{b}$) is $\infty$ if the denominator vanishes. The stabilizer of the point $0 \in \bbP^1(F)$ is $B$, and the map

\[\iota: B \bksl G \lra \bbP^1(F) \;,\;\; Bg \mapsto 0.g \;,\]

\vskip8pt

is bijective and $G$-equivariant, if we consider the right translation action of $G$ on $B \bksl G$. The quotient $B \bksl G$ inherits the structure of a locally $F$-analytic manifold from $G$, and we equip $\bbP^1(F)$ with the structure of a locally $F$-analytic manifold via $\iota$ (so that $\iota$ becomes an isomorphism of locally $F$-analytic manifolds). Each $g \in G$ acts as a locally $F$-analytic automorphism on $\bbP^1(F)$. 

\vskip8pt

We denote by $\POmega_{\sigma,k}$ the set of $G_\sigma(k)$-orbits on $\bbP^1(F)$. By the discussion in the preceding paragraph, there is a canonical bijection  between $B$-$G_\sigma(k)$ double cosets of $G$ and orbits of (right) action of $G_\sigma(k)$ on $\bbP^1(F)$, given by 
\[\Omega_{\sigma, k} \stackrel{1:1}{\lra} \POmega_{\sigma,k} \;, \; \D \mapsto \PD := B\bksl \D \;.\]
 
 \vskip8pt
For $z_0 \in F \sub \bbP^1(F)$ and $r \in \bbR_{> 0}$ we then set $\bB_z(z_0, r) = \{x \in F \midc |x-z_0| \le r\}$, which is a closed disc of radius $r$ around $z_0$. Similarly, for $r \in \bbR_{> 0}$ and $w_0 \in \bbP^1(F) \setminus \{0\}$ we set $\bB_w(w_0,r) = \{x \in F^\times \cup \{\infty\} \midc \left|\frac{1}{x}- \frac{1}{w_0}\right| \le r \}$, where we interpret $\frac{1}{\infty}$ as zero. In particular, $\bbP^1(F) = \bB_z(0,1) \sqcup \bB_w(\infty,|\vpi|)$.

\begin{prop} \label{particular orbits}  For $v_0 =[\cO \oplus \cO], v_1= [(\vpi)\oplus \cO]$ and $e_0= \{v,v_1\}$, the orbits of $G_{v_0}(k), G_{v_1}(k)$ and $G_{e_0}(k)$ can be described as follows

\vskip8pt

(i) The orbits of $G_{v_0}(k)$ on $\bbP^1(F)$ are discs of radius $|\vpi|^k$ of the form $\bB_z(z_0, |\vpi|^k)$ on $\bB_z(0, 1)$ with $z_0\in \bB_z(0, 1)$ and are of the form $\bB_w(w_0, |\vpi|^k)$ on $\bB_w(\infty, |\vpi|)$ with $w_0\in \bB_w(\infty, |\vpi|)$. 

\vskip8pt 

(ii)The orbits of $G_{v_1}(k)$ are discs of radius $|\vpi|^{k-1}$ of the form $\bB_z(z_0, |\vpi|^{k-1}$ on $\bB_z(0,1)$ with $z_0\in \bB_z(0,1)$ and discs of radius $|\vpi|^{k+1}$ of the form $\bB_w(w_0, |\vpi|^{k+1})$ on $\bB_w(\infty, |\vpi|)$ with $w_0\in \bB_w(\infty, |\vpi|) $. 
\vskip8pt

(iii) The orbits $G_{e_0}(k)$-orbits on $\bB_z(0,1)$ are of the form $\bB_z(z_0, |\vpi|^{k-1})$ (i.e.,  same as orbits of of $G_{v_1}(k)$) with $z_0\in \bB_z(0,1)$ and on $\bB_w(\infty, |\vpi|)$ they are of the form $\bB_w(w_0, |\vpi|^{k})$ (i.e., same as orbits of of $G_{v_0}(k)$) with $w_0 \in \bB_w(\infty, |\vpi|)$ . 
\end{prop}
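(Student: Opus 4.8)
The plan is to reduce everything to an explicit matrix computation using the descriptions of $G_{v_0}(k)$, $G_{v_1}(k)$, and $G_{e_0}(k)$ from Lemma~\ref{simplexgroups}, together with the M\"obius action \ref{action}. First I would record how a matrix $g = \begin{bmatrix} \alpha & \beta \\ \gamma & \delta\end{bmatrix}$ near the identity moves a point: for $z$ with $|z| \le 1$ (so $z \in \bB_z(0,1)$) one has $z.g = \frac{\alpha z + \gamma}{\beta z + \delta}$, and for $w$ with $|w| \ge |\vpi|^{-1}$ (i.e. $1/w \in \vpi\cO$) one has $\frac{1}{w.g} = \frac{\beta w + \delta}{\alpha w + \gamma} = \frac{\beta + \delta/w}{\alpha + \gamma/w}$, so it is cleanest to work in the coordinate $u = 1/w \in \vpi\cO$ where the action reads $u \mapsto \frac{\beta + \delta u}{\alpha + \gamma u}$. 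In both charts the denominator is a unit for the groups in question, so these are genuine locally analytic bijections of the chart onto itself and the orbit computation is a matter of estimating $|z.g - z|$ (resp. $|u.g - u|$).

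Next I would treat part~(i). Take $g \in G_{v_0}(k)$, so $\alpha = 1 + \vpi^k a$, $\beta = \vpi^k b$, $\gamma = \vpi^k c$, $\delta = 1 + \vpi^k d$ with $a,b,c,d \in \cO$. For $|z|\le 1$ a direct computation gives $z.g - z = \frac{\vpi^k(az + c) - \vpi^k z(bz + d)}{\beta z + \delta}$, whose numerator lies in $\vpi^k\cO$ and whose denominator is a unit, so $|z.g - z| \le |\vpi|^k$; hence every orbit is contained in a disc $\bB_z(z_0,|\vpi|^k)$. For the reverse inclusion I would exhibit, given $z_0$ and any $z_1$ with $|z_1 - z_0|\le |\vpi|^k$, a matrix in $G_{v_0}(k)$ carrying $z_0$ to $z_1$: for instance the unipotent $\begin{bmatrix} 1 & 0 \\ \vpi^k c & 1\end{bmatrix}$ sends $z_0$ to $z_0/(\vpi^k c z_0 + 1)$, and one checks this surjects onto the disc as $c$ ranges over $\cO$ (or, more robustly, use a two-parameter family and solve). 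The $\bB_w$ part is handled identically in the $u = 1/w$ coordinate, where $G_{v_0}(k)$ acts by $u \mapsto \frac{\vpi^k b + (1+\vpi^k d)u}{(1+\vpi^k a) + \vpi^k c u}$ and the same estimate gives $|u.g - u| \le |\vpi|^k$ for $u \in \vpi\cO$, matching the radius $|\vpi|^k$ in the $w$-chart.

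Part~(ii) is the same argument applied to $G_{v_1}(k)$, whose matrices are $\begin{bmatrix} 1 + \vpi^k a & \vpi^{k+1} b \\ \vpi^{k-1} c & 1 + \vpi^k d\end{bmatrix}$ (from the conjugation formula in the proof of Lemma~\ref{simplexgroups}): on $\bB_z(0,1)$ the worst term in $z.g - z$ is the $\vpi^{k-1} c$ entry contributing $|\vpi|^{k-1}$, giving orbits of radius $|\vpi|^{k-1}$, while on $\bB_w(\infty,|\vpi|)$, in the coordinate $u \in \vpi\cO$, the entries $\vpi^{k+1}b$ and $\vpi^{k-1}c$ interact with the extra $|\vpi|$ from $|u|\le|\vpi|$ so that $|u.g - u| \le |\vpi|^{k+1}$ — careful bookkeeping of the powers of $\vpi$ is needed here. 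Part~(iii) then follows formally: since $G_{e_0}(k) = G_{v_0}(k)\cdot G_{v_1}(k)$, a $G_{e_0}(k)$-orbit is a union of $G_{v_0}(k)$-orbits and also a union of $G_{v_1}(k)$-orbits, hence contains both a $\bB_z(\cdot,|\vpi|^k)$ and a $\bB_z(\cdot,|\vpi|^{k-1})$ through each of its points on $\bB_z(0,1)$, forcing it to equal the larger one, $\bB_z(z_0,|\vpi|^{k-1})$; dually on $\bB_w(\infty,|\vpi|)$ the larger radius is $|\vpi|^k$, giving $\bB_w(w_0,|\vpi|^k)$. Alternatively, one reads it straight off the explicit form \ref{Ge} of $G_{e_0}(k)$ by the same estimate.

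The main obstacle is bookkeeping rather than conceptual: getting the exponents right in the $\bB_w$-chart, where one must keep track of three competing sources of $\vpi$-powers (the congruence level $k$, the "twist" $\vpi^{\pm 1}$ coming from the off-diagonal scaling in $G_{v_1}(k)$ versus $G_{v_0}(k)$, and the radius $|\vpi|$ of the chart $\vpi\cO$ itself), and making sure the surjectivity half of each orbit computation is genuinely established and not just the easy containment. Everything else is a routine ultrametric estimate.
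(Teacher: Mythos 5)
Your strategy is the same as the paper's: read off the explicit matrix form of $G_{v_0}(k)$, $G_{v_1}(k)$, $G_{e_0}(k)$ from Lemma~\ref{simplexgroups}, estimate $|z.g - z|$ (resp.\ $|1/w.g - 1/w|$) ultrametrically to get the containment of each orbit in a disc, and then exhibit lower-triangular unipotents to get the reverse inclusion; the paper's proof does exactly this, and your observation for (iii) that $G_{e_0}(k) = G_{v_0}(k)\cdot G_{v_1}(k)$ is just a slightly more structural phrasing of the ``analogous computation'' the paper invokes. One small internal slip worth fixing: after correctly stating the paper's convention $z.g = \frac{\alpha z + \gamma}{\beta z + \delta}$, you then apply the other Möbius convention when you say $\begin{bmatrix}1 & 0 \\ \vpi^k c & 1\end{bmatrix}$ sends $z_0$ to $z_0/(\vpi^k c z_0 + 1)$ --- with the paper's right action this unipotent is simply the translation $z_0 \mapsto z_0 + \vpi^k c$, which surjects onto $\bB_z(z_0,|\vpi|^k)$ immediately and makes the ``two-parameter family'' hedge unnecessary.
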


\begin{proof}

(i) and (ii) To compute the orbits of $G_{v_0}(k)$ on $\bB_z(0,1)$ we use the description of $G_{v_0}(k)$ from \ref{Gv} and the description of the action in \ref{action} to get for $z_o\in \bB_z(0,1)$

\begin{align*}
    \frac{az_0 + c}{bz_0 +d}= (z_0 + O(\vpi^k))(1 + O(\vpi^k))= z_0+ O(\vpi^k)
\end{align*}

for $g= \begin{bmatrix} a & b \\c & d\end{bmatrix} \in G_{v_0}(k)$. This shows that the orbit of $z_0$ is contained in $\bB_z(z_0, |\vpi|^k)$. Via the translation $z_0 \ra z_0.\begin{bmatrix} 1 & 0\\ c & 1\end{bmatrix} = z_0 + c$, we conclude that the orbit is indeed $\bB_z(z_0, |\vpi|^k)$. A similar computation shows that the orbit of $w_0 \in \bB_w(\infty, |\vpi|)$ is $\bB_w(w_0, |\vpi|^k)$. 

\vskip8pt

For $g = \begin{bmatrix} a & b \\c & d\end{bmatrix} \in G_{v_1}(k)$ we have,
\begin{align*}
    \frac{az_0 + c}{bz_0 +d}= (z_0 + O(\vpi^{k-1}))(1 + O(\vpi^k))= z_0+ O(\vpi^{k-1})
\end{align*}
And again by using matrices of the form $\begin{bmatrix} 1 & 0\\ c & 1\end{bmatrix}$ we obtain that the orbits on $\bB_z(0,1)$ are $\bB_z( z_0, |\vpi|^{k-1})$.
On the other hand, on $\bB_w(\infty, |\vpi|)$ we have, \begin{align*}
    \frac{1}{w_0.g} = \frac{bw_0+ d}{aw_0+c} = \frac{b+ d\frac{1}{w_0}}{a+c\frac{1}{w_0}} = (\frac{1}{w_0}+ O(\vpi^{k+1}))(1 + O(\vpi^k))= \frac{1}{w_0}+ O(\vpi^{k+1})
\end{align*}

thus the orbits on $\bB_w(\infty, |\vpi|)$ of $G_{v_1}(k)$ are disjoint discs $\bB_w(w_0,|\vpi|^{k+1})$ and $\bB_w(\infty, |\vpi|^{k+1})$. This shows assertion (ii). 
\vskip8pt

 (iii) Using \ref{Ge}, we can show via an analogous computation that the orbits of $G_{e_0}(k)$ on $\bB_z(0,1)$ are of the form $\bB_z(z_0, |\vpi|^{k-1})$ (i.e., orbit of of $G_{v_1}(k)$) and on $\bB_w(\infty, |\vpi|)$ they are of the form $\bB_w(w_0, |\vpi|^{k}), w_0 \in \bB_w(\infty, |\vpi|) $ (i.e., orbits of  $G_{v_0}(k)$). 
\end{proof}

We can make a more general statement based  on the previous proposition.

\begin{prop}\label{orbit}
Let $v, v'\in BT_0$ and $e = \{v,v'\} \in BT_1$. We have,
\vskip8pt

(i) $\Omega_{v,k}\cap \Omega_{v',k} = \emptyset$.

\vskip8pt

(ii) For every $\D \in \Omega_{v,k}$ there is $\D' \in \Omega_{v',k}$ such that $\D \sub \D'$ or $\D' \sub \D$. Furthermore, $\Omega_{v,k}^{v'} := \{\D \in \Omega_{v,k} \midc \exists \D' \in \Omega_{v',k}: \D' \sub \D\}$ has cardinality $q^{k-1}$ and each $\D \in \Omega_{v,k}^{v'}$ contains $q$ orbits of $\Omega_{v',k}$.

\vskip8pt

(iii)  If $\D \in \Omega_{e,k}$ then $\D \in \Omega_{v,k}$ or $\D \in \Omega_{v',k}$.

\vskip8pt

(iv) If $\D \in \Omega_{v,k} \cap \Omega_{e,k}$ then there exists $\D'\in \Omega_{v',k}$ such that $\D' \subset \D$ and similarly, if $\D' \in \Omega_{v',k} \cap \Omega_{e,k}$ then there exists $\D\in \Omega_{v,k}$ such that $\D \subset \D'$.

\vskip8pt

(v) If $\D'\subset \D$ for double-cosets in $\Omega_{v',k}$ and $\Omega_{v,k}$, respectively, then $\D \in \Omega_{v,k}\cap \Omega_{e,k}.$
\end{prop}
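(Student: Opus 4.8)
The plan is to reduce all five assertions to the special case $v = v_0$, $v' = v_1$, $e = e_0$ and then read them off from the explicit orbit descriptions of Proposition~\ref{particular orbits}. For the reduction: since $G$ acts transitively on oriented edges (the case $n = 1$ of Lemma~\ref{n-transitive}), choose $g \in G$ with $g.v_0 = v$ and $g.v_1 = v'$, so that $G_v(k) = gG_{v_0}(k)g^{-1}$, $G_{v'}(k) = gG_{v_1}(k)g^{-1}$ and $G_e(k) = gG_{e_0}(k)g^{-1}$. Under the identification $\bbP^1(F) \cong B\bksl G$, right translation by $g^{-1}$ is a bijection of $\bbP^1(F)$ carrying each $G_\sigma(k)$-orbit onto a $G_{g.\sigma}(k)$-orbit and preserving inclusions of subsets; it therefore transports each of (i)--(v) for the triple $(v_0, v_1, e_0)$ to the corresponding statement for $(v, v', e)$. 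So it suffices to prove the proposition for $v_0$, $v_1$, $e_0$.

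Next I would record the data of Proposition~\ref{particular orbits} as a table. Every orbit in question is a disc lying entirely in one of the two halves $\bB_z(0,1)$, $\bB_w(\infty,|\vpi|)$ of $\bbP^1(F)$, and the radii are: $G_{v_0}(k)$ gives radius $|\vpi|^k$ on each half; $G_{v_1}(k)$ gives radius $|\vpi|^{k-1}$ on $\bB_z(0,1)$ and $|\vpi|^{k+1}$ on $\bB_w(\infty,|\vpi|)$; $G_{e_0}(k)$ gives radius $|\vpi|^{k-1}$ on $\bB_z(0,1)$ and $|\vpi|^{k}$ on $\bB_w(\infty,|\vpi|)$. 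Against this I would use the elementary ultrametric facts: two discs (in a fixed chart) are either disjoint or nested; a disc of radius $|\vpi|^a$ lies in a disc of radius $|\vpi|^b$ it meets precisely when $a \ge b$; a disc of radius $|\vpi|^b$ is a disjoint union of exactly $q^{a-b}$ discs of radius $|\vpi|^a$ when $a \ge b$; and $\bB_z(0,1)$ (resp. $\bB_w(\infty,|\vpi|)$) is a disjoint union of $q^a$ (resp. $q^{a-1}$) discs of radius $|\vpi|^a$.

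Each assertion is then a short radius comparison carried out one half at a time. For (i): a disc common to $\Omega_{v_0,k}$ and $\Omega_{v_1,k}$ would have to carry two distinct radii on whichever half it sits in. For (ii): $\Omega_{v_1,k}$ also partitions each half, so every $\D \in \Omega_{v_0,k}$ meets, hence is nested with, a unique $\D' \in \Omega_{v_1,k}$; the radii force $\D \subset \D'$ when $\D$ lies on $\bB_z(0,1)$ and $\D' \subset \D$ when $\D$ lies on $\bB_w(\infty,|\vpi|)$, so $\Omega_{v_0,k}^{v'}$ is exactly the set of $G_{v_0}(k)$-orbits on $\bB_w(\infty,|\vpi|)$, of which there are $q^{k-1}$, each being a union of $q$ orbits from $\Omega_{v_1,k}$. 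For (iii): the $G_{e_0}(k)$-orbit partition equals that of $G_{v_1}(k)$ on $\bB_z(0,1)$ and that of $G_{v_0}(k)$ on $\bB_w(\infty,|\vpi|)$. For (iv) and (v): comparing the table, $\Omega_{v_0,k} \cap \Omega_{e_0,k}$ consists precisely of the orbits lying on $\bB_w(\infty,|\vpi|)$ and $\Omega_{v_1,k} \cap \Omega_{e_0,k}$ of those lying on $\bB_z(0,1)$; each such $\D$ strictly contains orbits of the finer vertex group on the same half (giving (iv)), while conversely a strict nesting $\D' \subsetneq \D$ with $\D' \in \Omega_{v_1,k}$ and $\D \in \Omega_{v_0,k}$ can occur only on $\bB_w(\infty,|\vpi|)$ (on $\bB_z(0,1)$ the would-be smaller set $\D'$ is the larger disc), and there it forces $\D$ to be a radius-$|\vpi|^k$ disc on $\bB_w(\infty,|\vpi|)$, hence an element of $\Omega_{e_0,k}$ (giving (v)).

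The only genuinely delicate point is the reduction: one must match the left action of $G$ on $BT$ with the right action of $G$ on $\bbP^1(F) = B\bksl G$ so that orbits and inclusions really do transport, and one must keep careful track of which half of $\bbP^1(F)$ each orbit lives in together with the convention $|\vpi| < 1$, so that a larger exponent corresponds to a smaller disc. Once this is in place there is no real obstacle; the rest is the radius arithmetic above.
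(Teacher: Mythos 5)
Your argument is correct and is essentially the paper's own proof: reduce to $(v_0,v_1,e_0)$ via transitivity of the $G$-action on $BT_0$ and $BT_1$ (together with the identification $\Omega_{\sigma,k}\leftrightarrow\POmega_{\sigma,k}$ and Remark~\ref{cosetbijection}), then read off all five parts from the explicit disc descriptions in Proposition~\ref{particular orbits}. You merely spell out the radius comparisons that the paper compresses into ``follows directly from \ref{particular orbits}.''
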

\begin{proof} Via the canonical  bijection  $\D \leftrightarrow \PD$ it is enough to show the corresponding assertions for orbits on $\bbP^1(F)$.

\vskip8pt

 Since the action of $G$ on  $BT_0$ and $BT_1$ is transitive  it is enough to prove the statements for $v_0, v_1$ and $e_0 = \{v_0,v_1 \}$, where $v_0$ and $v_1$ are as in \ref{particular orbits}. 

\vskip8pt
(i), (ii) and (iii) Follows directly from \ref{particular orbits}.

\vskip8pt 

(iv) If $\PD=\bB_w(w_0, |\vpi|^k)\subset \bB_w(\infty, |\vpi|) \in \POmega_{v_0,k}\cap \POmega_{e_0,k}$ then we can take $\PD'= \bB_w(w_0, |\vpi|^{k+1}) \in \Omega_{v_1,k}$. And if $\PD=\bB_z(z_0, |\vpi|^{k-1}) \subset \bB_z(0,1) \in \POmega_{v_1,k}\cap \POmega_{e_0,k}$ then we can take $\PD' = \bB_z(z_0, |\vpi|^k)$.

\vskip8pt

(v) The only orbits for which the relation $\PD' \subset \PD$ with $\PD' \in \POmega_{v_1,k}, \PD \in \POmega_{v_0,k}$ holds are of the form $\bB_w(w_0, |\vpi|^{k+1}) \subset \bB_w(w_0, |\vpi|^k)$. And any such $\bB_w(w_0, |\vpi|^k) \in \POmega_{v_0,k}$ also belongs to $\POmega_{e_0, k}$.
\end{proof}

\begin{rem}\label{cosetbijection}
Let $\sigma', \sigma$ be two simplices in $BT$ with $\sigma'= g.\sigma$ for $g\in G$. The map

\[\begin{array}{cccccc}
   \Omega_{\sigma,k}& \lra & \Omega_{\sigma',k},\;\;\;\;\;\; \POmega_{\sigma,k}& \lra & \POmega_{\sigma',k}  \\
    \D  & \lra & \D. g^{-1}, \;\;\;\;   \PD  & \lra & \PD. g^{-1}
\end{array}\] defines a bijection between   $\Omega_{\sigma,k}$ and  $\Omega_{\sigma',k}$ and between $\POmega_{\sigma,k}$ and  $\POmega_{\sigma',k}$ respectively.
\end{rem}

\begin{lemma}\label{n=1}
Let $v_{\alpha,1}:= [\lan (1,\alpha), (0,\vpi) \ran]$, $\alpha \in \cO/(\vpi)$, be the vertex corresponding to the homothety class of the $\cO$-lattice $\lan (1,[\alpha]), (0,\vpi) \ran \sub F^{\oplus 2}$. Then the orbits of $G_{v_{\alpha,1}}(k)$ on $\bB_z([\alpha], |\vpi|) \subset \bB_z(0,1)$ are of the form $\bB_z(\beta, |\vpi|^{k+1})$, for $\beta \in \bB_z([\alpha], |\vpi|)$. Moreover each of the orbits of $G_{v_{\alpha, 1},k}$ of the form $\bB_z(\beta, |\vpi|^{k+1})$ on $\bB_z([\alpha], |\vpi|)$ is contained in an orbit of the form $\bB_z(z_0, |\vpi|^k)$ of $G_{v_0,k}$. 

\vskip8pt 
Each of the other orbits of $G_{v_{\alpha,1}}(k)$ contains $q$ orbits of $G_{v_0}(k)$. 
\end{lemma}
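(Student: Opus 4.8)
The plan is to deduce everything from Proposition \ref{particular orbits} by transporting orbits along the $G$-action, in the spirit of Proposition \ref{orbit}. First I would observe that $v_{\alpha,1}$ is a neighbour of $v_0$: with $\Lambda_\alpha:=\lan(1,[\alpha]),(0,\vpi)\ran$ one has $\vpi(\cO\oplus\cO)\subsetneq\Lambda_\alpha\subsetneq\cO\oplus\cO$, both indices equal to $q$, so $d(v_0,v_{\alpha,1})=1$. Moreover the matrix $g_\alpha:=\begin{bmatrix}1 & 0\\ {[\alpha]} & \vpi\end{bmatrix}$ carries the standard basis of $\cO\oplus\cO$ to a generating set of $\Lambda_\alpha$, so $g_\alpha.v_0=v_{\alpha,1}$ and hence $G_{v_{\alpha,1}}(k)=g_\alpha G_{v_0}(k)g_\alpha^{-1}$. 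By Remark \ref{cosetbijection} the assignment $\PD\mapsto\PD.g_\alpha^{-1}$ is a bijection $\POmega_{v_0,k}\xrightarrow{\ \sim\ }\POmega_{v_{\alpha,1},k}$, so computing the $G_{v_{\alpha,1}}(k)$-orbits reduces to applying the M\"obius action of $g_\alpha^{-1}$ to the $G_{v_0}(k)$-orbits described in \ref{particular orbits}(i).

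The second step is to make this action explicit via \ref{action}. A one-line computation shows that $g_\alpha^{-1}$ acts on $\bbP^1(F)$ by an affine map $z\mapsto\vpi z+c$ with $c\in\cO$ (and fixes $\infty$); in particular it carries $\bB_z(0,1)$ bijectively onto the residue disc $\bB_z([\alpha],|\vpi|)\subset\bB_z(0,1)$ attached to $v_{\alpha,1}$, multiplying every $z$-radius by $|\vpi|$, while it carries $\bB_w(\infty,|\vpi|)$ bijectively onto the complement $\bbP^1(F)\setminus\bB_z([\alpha],|\vpi|)$. Transporting \ref{particular orbits}(i): the orbits $\bB_z(z_0,|\vpi|^k)\subset\bB_z(0,1)$ of $G_{v_0}(k)$ go to discs $\bB_z(\beta,|\vpi|^{k+1})$ which together exhaust $\bB_z([\alpha],|\vpi|)$, and the orbits $\bB_w(w_0,|\vpi|^k)\subset\bB_w(\infty,|\vpi|)$ go to discs of radius $|\vpi|^{k-1}$ (one checks the latter by the same $1/z$-chart manipulation used in the proof of \ref{particular orbits}, or simply notes that $g_\alpha^{-1}$ rescales the relevant chart by $|\vpi|^{\pm1}$). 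This already yields the first two assertions: the $G_{v_{\alpha,1}}(k)$-orbits lying in $\bB_z([\alpha],|\vpi|)$ are exactly the discs $\bB_z(\beta,|\vpi|^{k+1})$, and each is contained in the unique disc $\bB_z(z_0,|\vpi|^k)$ around it, which is an orbit of $G_{v_0}(k)$ by \ref{particular orbits}(i).

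For the final assertion, every remaining $G_{v_{\alpha,1}}(k)$-orbit is a disc of radius $|\vpi|^{k-1}$ — a $z$-disc in some $\bB_z([\beta],|\vpi|)$ with $\beta\ne[\alpha]$, or a $w$-disc in $\bB_w(\infty,|\vpi|)$ — and since $[\vpi^{k-1}\cO:\vpi^k\cO]=q$, such a disc is the disjoint union of exactly $q$ discs of radius $|\vpi|^k$, i.e. of $q$ orbits of $G_{v_0}(k)$. Two routine points must be recorded en route: that $G_{v_{\alpha,1}}(k)$ does preserve the partition of $\bbP^1(F)$ into residue discs (valid once $k\ge 2$, since the explicit shape of $g_\alpha G_{v_0}(k)g_\alpha^{-1}$ shows $G_{v_{\alpha,1}}(k)\subset\GL_2(\cO)$ and reduces trivially mod $\vpi$), and the exact normalisation of the lift $[\alpha]$ singling out the right residue disc, which is settled by direct substitution. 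The only genuine work is this bookkeeping — keeping straight which residue disc is hit and passing correctly between the $z$- and $w$-charts; there is no conceptual obstacle. Equivalently one can bypass the reduction and run the brute-force computation of the proof of \ref{particular orbits} directly with the explicit matrix group $g_\alpha G_{v_0}(k)g_\alpha^{-1}$: for $z_0\in\bB_z(0,1)$ one finds $z_0.g-z_0=-\vpi^{k-1}b\,(z_0-\gamma)^2+O(\vpi^k)$ for a suitable $\gamma\in\cO$ whose residue class is that of $v_{\alpha,1}$, which makes the dichotomy transparent.
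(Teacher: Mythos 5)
Your proposal is correct and follows the same route as the paper's proof: identify $v_{\alpha,1}=g_\alpha.v_0$ with $g_\alpha=\begin{bmatrix}1&0\\ [\alpha]&\vpi\end{bmatrix}$, invoke Remark \ref{cosetbijection} to get $\POmega_{v_{\alpha,1},k}=\POmega_{v_0,k}.g_\alpha^{-1}$, compute that $g_\alpha^{-1}$ acts affinely by $z\mapsto\vpi z-[\alpha]$, and transport the orbits from Proposition \ref{particular orbits}. You spell out the bookkeeping for the complementary orbits and the count of $q$ that the paper compresses into ``very similarly we can compute the other orbits,'' but there is no difference in method.
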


\begin{proof}
Note that $v_{\alpha,1} = g_\alpha. v_0$, where $g_\alpha= \begin{bmatrix} 1 & 0 \\ [\alpha] & \vpi\end{bmatrix}$ and $v_0= [\cO \oplus \cO]$ as in \ref{orbit}. Thus by \ref{cosetbijection} we have $\POmega_{v_{\alpha, 1},k}= \POmega_{v_0,k}. g_\alpha^{-1}$. Now $$\beta.g_\alpha^{-1}= \beta. \begin{bmatrix} 1 & 0 \\ -\frac{[\alpha]}{\vpi} & \vpi^{-1}\end{bmatrix}= \frac{\beta-\frac{[\alpha]}{\vpi}}{\vpi^{-1}}= \vpi.\beta- [\alpha].$$ This transformation takes $\bB_z(0, |\vpi|^k). g_\alpha^{-1}= \bB_z([\alpha], |\vpi|^{k+1})$ and very similarly we can compute the other orbits. The containment relation is clear from the radius of the  respective orbits.
\end{proof}

\begin{para}\label{Omega-ks}  We also introduce the following notation for the rest of the paper

\[\begin{array}{lclclclclclc}

\Omega_{0, k} &=& \bigsqcup_{v \in BT_0} \Omega_{v,k}\;,& \; \POmega_{0, k} & = &\bigsqcup_{v \in BT_0} \POmega_{v,k}\;\\
    &&\\
\Omega_{0, k,n} & =&  \bigsqcup_{v \in BT_{0,n}} \Omega_{v,k}\;,& \;\POmega_{0, k,n} & = & \bigsqcup_{v \in BT_{0,n}} \POmega_{v,k}\;\\
    &&\\    
     \Omega_{1,k}  & =&   \bigsqcup_{e\in BT_1^{\ori}}\Omega_{e,k}\;,&\; \POmega_{1,k} & = &   \bigsqcup_{e\in BT_1^{\ori}}\POmega_{e,k}\\
    &&\\
\Omega_{1, k,n} & =&  \bigsqcup_{e \in BT^{\ori}_{1,n}} \Omega_{e,k}\;, & \;\POmega_{1, k,n} & = & \bigsqcup_{e \in BT^{\ori}_{1,n}} \POmega_{e,k}\\&&\\
\oOmega_{0, k} &=& \bigcup_{v \in BT_0} \Omega_{v,k}\;,& \; \oPOmega_{0, k} & = &\bigcup_{v \in BT_0} \POmega_{v,k}\;\\
    &&\\
\oOmega_{0, k,n} & =&  \bigcup_{v \in BT_{0,n}} \Omega_{v,k}\;,& \;\oPOmega_{0, k,n} & = & \bigcup_{v \in BT_{0,n}} \POmega_{v,k}\;\\
    &&\\    
     \oOmega_{1,k}  & =&   \bigcup_{e\in BT_1^{\ori}}\Omega_{e,k}\;,&\; \oPOmega_{1,k} & = &   \bigcup_{e\in BT_1^{\ori}}\POmega_{e,k}\\
    &&\\
\oOmega_{1, k,n} & =&  \bigcup_{e \in BT^{\ori}_{1,n}} \Omega_{e,k}\;, & \;\oPOmega_{1, k,n} & = & \bigcup_{e \in BT^{\ori}_{1,n}} \POmega_{e,k}\;
\end{array}\]

\vskip8pt

\begin{rem}\label{subsetrelations}

Note that, (iii) of \ref{orbit} says, If $\D \in \Omega_{e,k}$ for $e=\{v,v'\}$ then $\D \in \Omega_{v,k}$ or $\D \in \Omega_{v',k}$. From this it is clear that we can think of $\Omega_{1,k}$ as a subset of  $\Omega_{0,k}$. And we can similarly conclude $\POmega_{1,k} \subset \POmega_{0,k}, \oOmega_{1,k} \subset \oOmega_{1,k}$ and $\oPOmega_{1,k} \subset \oPOmega_{0,k}$. 

\vskip8pt

We equip $\oOmega_{0,k}$ (and $\oPOmega_{0,k}$) with the partial order via inclusion.
\end{rem}
 
\end{para}

\begin{rem}
Note that $\Omega_{0,k,n}$ and $\oOmega_{0,k,n}$ (and by extension $\Omega_{0,k}$ and $\oOmega_{0,k}$)  are not the same. For example,  by \ref{n=1}, $\bB_w(\infty, |\vpi|)$ is in both $\Omega_{v_{1,1},2}$ and $\Omega_{v_{0,1},2}$, where $v_{0,1}$ and $v_{1,1}$ are as in \ref{n=1}. There is an obvious map $\pr: \Omega_{0,k} \ra \oOmega_{0,k} $ which restricts to a map from $\pr: \Omega_{0,k,n} \ra \oOmega_{0,k,n} $, taking every orbit in $\oOmega_{0,k}$  that are same as sets to the same set in $\oOmega_{0,k}$. We will use the same notation for the map $\pr: \POmega_{0,k} \ra \oPOmega_{0,k}$. 

\end{rem}

 For sake of clarity, from now on let $\oD = \pr(\D)$ and  $\oPD = \pr(\PD)$. When we want to emphasize that the double coset $\D \in \Omega_{v,k}$ (resp. the orbit $\PD \in \POmega_{v,k}$) is just considered as a subset of $G$ (resp. a subset of $\bbP^1(F)$), then we write $\oD$ (resp. $\oPD$) for it.

\begin{prop}\label{discsorbits}
For $G_v(k)$-orbits on $\bbP^1(F)$ we have the following:

\vskip8pt

(i) Given any $z \in \bbP^1(F)$ and $k \in \Z_{>0}$, the set \[\{\oPD \midc  v \in BT_0\,, \; \PD \in  \POmega_{v,k}\,,\; z \in \overline{\PD}\}\] 
is a fundamental system of open compact neighborhoods of $z$.

\vskip8pt 
(ii)\label{refine} Given any covering $\cU$ of $\bbP^1(F)$ there is a  finite subset of $\oPOmega_{0,k}$ that refines  $\cU$.

\end{prop}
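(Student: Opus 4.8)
\textbf{Proof proposal for Proposition \ref{discsorbits}.}

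The plan is to deduce both parts from the explicit description of $G_v(k)$-orbits obtained in \ref{particular orbits} and \ref{n=1}, combined with the transitivity statements \ref{n-transitive} and the coset-translation bijection \ref{cosetbijection}. For part (i), I would first reduce to the case $z \in \bB_z(0,1)$, say even $z = 0$ after translating by an element of $G_0$ (using $G$-equivariance of $\iota$ and \ref{cosetbijection} to transport orbits); the case $z = \infty$ is handled symmetrically via the chart $\phi_\infty$, or by applying the Weyl element. Now for $z=0$ the orbit of $0$ under $G_{v_0}(k)$ is the disc $\bB_z(0,|\vpi|^k)$ by \ref{particular orbits}(i), so these orbits alone already give arbitrarily small discs around $0$ as $k$ grows — but here $k$ is fixed, so I must instead produce small neighborhoods by moving to \emph{deeper} vertices. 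The key point: by \ref{n-transitive}, for each $n$ there is a vertex $v_n$ along the standard apartment with $d(v_n,v_0)=n$ whose stabilizer-congruence group $G_{v_n}(k)$ is a conjugate of $G_{v_0}(k)$ by $\diag(\vpi^n,1)$; conjugating the orbit description and using the action \ref{action}, the orbit of $0$ under $G_{v_n}(k)$ is a disc about $0$ of radius $|\vpi|^{k+n}$ (compare the radius computation in \ref{n=1}, where passing to a neighbor already shrank $|\vpi|^k$ to $|\vpi|^{k+1}$). Since these radii tend to $0$, the collection $\{\oPD : v\in BT_0,\ \PD \in \POmega_{v,k},\ z\in\oPD\}$ contains arbitrarily small discs around $z$; as every such $\oPD$ is open and compact (being an orbit of an open subgroup acting on the compact space $\bbP^1(F)$, hence a finite union of the cosets' worth of small discs, or directly a disc by the explicit description), it is a fundamental system of open compact neighborhoods.

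For part (ii), let $\cU$ be an open covering of $\bbP^1(F)$. By part (i), for each point $z \in \bbP^1(F)$ choose $U_z \in \cU$ with $z \in U_z$, and then choose $\oPD_z \in \oPOmega_{0,k}$ with $z \in \oPD_z \subseteq U_z$. The family $\{\oPD_z\}_{z \in \bbP^1(F)}$ is an open cover of the compact space $\bbP^1(F)$, so it admits a finite subcover $\oPD_{z_1},\dots,\oPD_{z_m}$; this finite subset of $\oPOmega_{0,k}$ refines $\cU$, since each $\oPD_{z_i} \subseteq U_{z_i} \in \cU$. This is immediate once (i) is in hand.

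The main obstacle is establishing, \emph{for fixed $k$}, that the orbits $\oPD$ through a given $z$ genuinely form a neighborhood basis — i.e. get arbitrarily small — which forces one to use vertices far from $v_0$ rather than large congruence level. The computation in \ref{n=1} already exhibits the mechanism ($\beta \mapsto \vpi\beta - [\alpha]$ shrinks radii by $|\vpi|$), and \ref{n-transitive} plus \ref{cosetbijection} let one iterate it along any geodesic ray emanating from $z$; one should check that the resulting discs are genuinely centered at (or at least contain) $z$ and that their radii are exactly $|\vpi|^{k+n}$, so that $n\to\infty$ gives the claim. The remaining point, that each $\oPD$ is compact open, is routine since $G_\sigma(k)$ is open in $G$ and $\bbP^1(F)$ is compact, so each orbit is both open and closed.
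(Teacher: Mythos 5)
Your proof takes essentially the same route as the paper's: both reduce to $z=0$, both produce a shrinking chain of orbits through $0$ of radii $|\vpi|^{k+n}$ by taking vertices $v_n$ of distance $n$ from $v_0$ along the standard apartment and conjugating the $v_0$-orbit description via $g_n = \diag(1,\vpi^n)$ (you use $\diag(\vpi^n,1)$, which is the same up to a scalar), and both deduce (ii) from (i) by compactness of $\bbP^1(F)$. Your writeup is a bit more detailed about the WLOG reduction and the openness/compactness of orbits, but the core mechanism is identical.
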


\begin{proof} 
(i) Without loss of generality we may assume that $z=0$.
Consider the vertices $v_n := [\cO \oplus (\vpi^n)], n \ge 1$. We have $g_n. v_0 = v_n$, where $g_n = \begin{bmatrix} 1 & 0 \\ 0 & \vpi^n\end{bmatrix}$. Let $\PD_0 := \bB_z( 0, |\vpi|^k) \in \POmega_{v_0,k}$, then we can compute that $\PD_n:= \PD_0. g_n^{-1} = \bB_z(0, |\vpi|^{n+k}) \in \POmega_{v_n,k}$. And, $\{\oPD_n\}_{n\ge 1} \subset \{\oPD \in \POmega_{v,k} \midc v \in BT_0, z \in \oPD\}$ forms a fundamental neighborhood around $z=0$ of compact open subsets of $\bbP^1(F)$.

\vskip8pt

(ii) This is an immediate corollary of (i).
\end{proof}

\subsection{Containment relations between the orbits}

\begin{para}{\it Another description of $\Ind^G_B(\chi)$.} We define an locally $F$-analytic embedding $s:\bbP^1(F) \ra G$ by 
\[s(z) = \left(\begin{array}{cc} 1 & 0 \\
z & 1 \end{array}\right), \; \mbox{if } |z| \le 1 \,,\; 
s(z) = \left(\begin{array}{cc} 0 & -1 \\
1 & \frac{1}{z} \end{array}\right), \; 
\mbox{if } |z|>1 \;.\] 
Then, for every $z \in \bbP^1(F)$ and $g \in G$, one has $\xi(z,g) := s(z)gs(z.g)^{-1} \in B$ and
\begin{numequation}\label{cocycle} 
g = \xi(0,g)s(0.g) \,, \mbox{ and } \xi(z,gg') = \xi(z,g)\xi(z.g,g') \;.
\end{numequation}
\end{para}

\begin{lemma}\label{identification} The map

\[\zeta: \Ind^G_B(\chi) \lra C^\la(\bbP^1(F),E), \; \zeta(f)(z) = f(s(z)) \; \]

is an isomorphism of topological vector spaces. 
\end{lemma}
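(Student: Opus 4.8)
The plan is to exhibit an explicit inverse to $\zeta$ and check that both it and $\zeta$ are continuous, using the topological description of $\Ind^G_B(\chi)$ from \ref{inducedrep} together with the locally analytic structure on $\bbP^1(F)$ coming from $\iota : B\bksl G \xrightarrow{\sim} \bbP^1(F)$. First I would observe that $\zeta$ is well-defined: since $f(bg)=\chi(b)f(g)$ and $\xi(0,g) \in B$ with $g = \xi(0,g)s(0.g)$ by \ref{cocycle}, the value $f(s(z))$ depends only on $z$, and $z \mapsto f(s(z))$ is locally $F$-analytic because $s$ is a locally $F$-analytic map $\bbP^1(F) \ra G$ and $f$ is locally $F$-analytic on $G$. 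Next I would check $G$-equivariance is not needed for the statement, but linearity and injectivity are: if $\zeta(f) = 0$ then $f(s(z)) = 0$ for all $z$, and since every $g \in G$ can be written as $g = \xi(0,g)s(0.g)$ with $\xi(0,g) \in B$, we get $f(g) = \chi(\xi(0,g)) f(s(0.g)) = 0$, so $f = 0$.

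For surjectivity I would define, for $h \in C^\la(\bbP^1(F),E)$, the function $\tilde{h} : G \ra E$ by $\tilde{h}(g) = \chi(\xi(0,g)) h(0.g)$. Using the cocycle relation $\xi(z,gg') = \xi(z,g)\xi(z.g,g')$ from \ref{cocycle} one checks $\xi(0,bg) = b\,\xi(0,g)$ for $b \in B$ (since $s(0) $ is the identity matrix and $0.b = 0$), hence $\tilde{h}(bg) = \chi(b)\tilde{h}(g)$; and $\tilde{h}$ is locally $F$-analytic because $g \mapsto \xi(0,g)$ is locally $F$-analytic $G \ra B$ (being $g \mapsto g\,s(0.g)^{-1}$, a composition of locally analytic maps, using that $g \mapsto 0.g$ is locally analytic $G \ra \bbP^1(F)$), $\chi$ is locally analytic on $B$, and $g\mapsto 0.g$ is locally analytic. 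So $\tilde{h} \in \Ind^G_B(\chi)$, and $\zeta(\tilde{h})(z) = \tilde{h}(s(z)) = \chi(\xi(0,s(z))) h(0.s(z)) = \chi(1)\,h(z) = h(z)$ since $\xi(0,s(z)) = s(0)s(z)s(z)^{-1} = 1$ and $0.s(z) = z$ by the definition of $s$ and the $G$-action. Thus $h \mapsto \tilde{h}$ is a linear inverse to $\zeta$.

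Finally I would address continuity. Using the identification $\Ind^G_B(\chi) \cong \Ind^{G_0}_{B_0}(\chi)$ of \ref{inducedrep}, where the target sits inside $C^\la(G_0,E)$ with its compact-type topology, both $\zeta$ and its inverse are given by pullback along the locally $F$-analytic maps $s$ (restricted appropriately) and $g \mapsto (\xi(0,g), 0.g)$; pullback of locally $F$-analytic functions along a locally $F$-analytic map of compact locally $F$-analytic manifolds is continuous for the compact-type topologies (this is a standard property of the functor $C^\la(-,E)$, cf. \cite{S-T}). Alternatively, one invokes the open mapping theorem: both sides are spaces of compact type (hence Fréchet, in particular barrelled and webbed — or one uses that compact type spaces are reflexive and that a continuous bijection between them is automatically a topological isomorphism), so it suffices to show $\zeta$ is continuous, which follows from $s$ being locally analytic. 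The main obstacle is the bookkeeping around charts: the map $s$ is defined by cases ($|z| \le 1$ versus $|z| > 1$) and one must verify it is genuinely locally $F$-analytic across the whole of $\bbP^1(F)$ and that the resulting topologies match — but given that $\bbP^1(F)$ is by construction glued from $B\bksl G$ via $\iota$, and $s$ is a section of the quotient map up to $B$, this is a routine check rather than a genuine difficulty. I expect the only real care needed is confirming that the compact-open/compact-type topology on $C^\la(\bbP^1(F),E)$ agrees with the one transported from $\Ind^G_B(\chi)$, which amounts to noting that $s$ composed with $G \to G_0$ and the quotient give mutually inverse locally analytic identifications of the relevant manifolds.
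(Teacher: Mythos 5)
Your proof is correct and essentially the same as the paper's: you define the same explicit inverse (and your formula $\tilde{h}(g)=\chi(\xi(0,g))\,h(0.g)$ in fact corrects an apparent typo in the paper's proof, which writes $f_1(s(0.g))$ — a type mismatch, since $f_1$ is a function on $\bbP^1(F)$ while $s(0.g)\in G$), verify the two maps are mutually inverse, and appeal to standard continuity facts for compact-type spaces, which is exactly what the paper leaves to the reader. One small misstatement to flag: compact-type spaces are not Fréchet (they are compact inductive limits of Banach spaces, i.e.\ strong duals of nuclear Fréchet spaces), but your alternative justifications — the open mapping theorem for barrelled and webbed spaces, or the reflexivity argument — are the right ones and carry the day.
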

\begin{proof} Because $s$ and $f$ are locally $F$-analytic, so is $\zeta(f)$. We define another map $\tilde{\zeta}: C^\la(\bbP^1(F),E)) \ra \Ind^G_B(\chi)$ by $\tilde{\zeta}(f_1)(g) = \chi(\xi(0,g))f_1(s(0.g))$. We leave it to the reader to check that these maps are continuous when $\Ind^G_B(\chi)$ is equipped with the structure of a compact inductive limit, as explained in \ref{inducedrep}, and $C^\la(\bbP^1(F),E)$ is also considered as a vector space of compact type, cf.  \cite[Lemma 2.1]{S-T}. And it is easy to see that $\zeta$ and $\tilde{\zeta}$ are inverses of each other.
\end{proof}

Using $\zeta$ we equip $C^\la(\bbP^1(F),E)$ with a $G$-action. Explicitly, on $f \in C^\la(\bbP^1(F),E)$, the group action is given by 
\begin{numequation}\label{groupaction_1}(g._\chi f)(z) = \chi(\xi(z,g))f(z.g) \;.
\end{numequation}
If we write $\chi(\diag(a,d)) = \chi_1(ad)\chi_2(d)$, and if $g = \left(\begin{array}{cc} a & b \\
c & d \end{array}\right)$, then \ref{groupaction_1} becomes
\begin{numequation}\label{groupaction_2}(g._\chi f)(z) = \chi_1(ad-bc)\chi_2(bz+d)f\Big(\frac{az+c}{bz+d}\Big) \;.
\end{numequation}

\begin{lemma}\label{proper containment} Let $v$ be a vertex in $BT$, then for every orbit $\PD$ of $G_v(k)$ there is an
edge $e =\{v,v'\}$ and an orbit $\PD'$ of $G_{v'}(k)$ which is contained properly
in $\D$.
\end{lemma}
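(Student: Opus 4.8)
The plan is to reduce to the explicitly computed orbits of Proposition \ref{particular orbits} via the transitivity of the $G$-action. Since $G$ acts transitively on $BT_0$, after conjugating by a suitable $g \in G$ (using Remark \ref{cosetbijection} to transport orbits) we may assume $v = v_0 = [\cO \oplus \cO]$. By Proposition \ref{particular orbits}(i), every orbit $\PD$ of $G_{v_0}(k)$ is either a disc $\bB_z(z_0,|\vpi|^k)$ contained in $\bB_z(0,1)$, or a disc $\bB_w(w_0,|\vpi|^k)$ contained in $\bB_w(\infty,|\vpi|)$.

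First I would handle the second type. The vertex $v_1 = [(\vpi)\oplus\cO]$ is adjacent to $v_0$, and by Proposition \ref{particular orbits}(ii) the orbits of $G_{v_1}(k)$ on $\bB_w(\infty,|\vpi|)$ are discs $\bB_w(w_0,|\vpi|^{k+1})$ of strictly smaller radius; any such disc $\bB_w(w_0,|\vpi|^{k+1})$ with $w_0$ in the given orbit is properly contained in $\bB_w(w_0,|\vpi|^k)$, so $e = \{v_0,v_1\}$ and this smaller disc do the job.

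For the first type, I would use Lemma \ref{n=1}: given an orbit $\bB_z(z_0,|\vpi|^k) \subset \bB_z(0,1)$, pick $\alpha \in \cO/(\vpi)$ with $z_0 \in \bB_z([\alpha],|\vpi|)$ (i.e. $[\alpha] \equiv z_0 \bmod \vpi$); then the vertex $v_{\alpha,1}$ is adjacent to $v_0$ (it lies at distance $1$, as noted in the excerpt), and by Lemma \ref{n=1} the orbits of $G_{v_{\alpha,1}}(k)$ on $\bB_z([\alpha],|\vpi|)$ are discs $\bB_z(\beta,|\vpi|^{k+1})$, each contained in an orbit of the form $\bB_z(z_0,|\vpi|^k)$ of $G_{v_0}(k)$. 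Choosing $\beta = z_0$ gives $\bB_z(z_0,|\vpi|^{k+1}) \subsetneq \bB_z(z_0,|\vpi|^k) = \PD$ with $\bB_z(z_0,|\vpi|^{k+1}) \in \POmega_{v_{\alpha,1},k}$, and we take $e = \{v_0, v_{\alpha,1}\}$.

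The only subtlety — which I expect to be the main (though minor) obstacle — is the bookkeeping of transporting the statement back along the conjugation $g$: one must check that if $\PD' \subsetneq \PD$ with $\PD' \in \POmega_{v',k}$ and $v'$ adjacent to $v_0$, then $\PD'.g^{-1} \subsetneq \PD.g^{-1}$ with $\PD'.g^{-1} \in \POmega_{g.v',k}$ and $g.v'$ adjacent to $g.v_0 = v$, which is immediate from Remark \ref{cosetbijection} and the fact that $g$ acts by a bijection on $\bbP^1(F)$ preserving the simplicial structure of $BT$. I would then simply state that the general case follows by translating the two explicit cases above.
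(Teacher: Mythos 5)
Your proof is correct and follows essentially the same route as the paper's: reduce to $v=v_0$ by transitivity, then handle the orbits in $\bB_w(\infty,|\vpi|)$ via $v_1$ and the orbits in $\bB_z(0,1)$ via $v_{\alpha,1}$, invoking Proposition \ref{particular orbits} and Lemma \ref{n=1} respectively. The only difference is that you spell out the conjugation bookkeeping explicitly, which the paper leaves implicit.
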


\begin{proof}
Without loss of generality we can assume that $v= v_0 := [\cO \oplus \cO]$. Then let $\PD = \bB_z(z_0, |\vpi|^k) \in \POmega_{v_0,k}$, if $z_0 \in \bB_z([\alpha], |\vpi|)$ for some $\alpha \in \bF_q$, then $G_{v_{\alpha, 1}}(k)$ has as an orbit $\bB_z(z_0, |\vpi|^{k+1})$ where $v_{\alpha, 1}$ is as in \ref{n=1}. If $\PD= \bB_w( w_0, |\vpi|^k) \subset \bB(\infty, |\vpi|)$ then by \ref{orbit} we have $\PD'= \bB_w(w_0, |\vpi|^{k+1})$ as an orbit of $G_{v_1}(k)$. 

\end{proof}

\begin{lemma} 
We have the following relations between $G_v(k)$-orbits for varying $v \in BT_0$. 

\vskip8pt

\label{containment}(i) Let $\{v,v'\} \in BT_1$ be an edge. Suppose there are $\PD \in \POmega_{v,k}$ and $\PD' \in \POmega_{v',k}$ such that $\oPD \subset \oPD'$. Then  for any edge $\{v'',v'\} \in BT_1$ with $d(v, v'')=2$ there is  $\PD'' \in \POmega_{v'',k}$ with $\oPD \subset \oPD' \subset \oPD''$.

\vskip8pt

\label{reversecontainment}(ii) Let $\{v',v''\} \in BT_{1}$ be an edge. Suppose there are $\PD' \in \POmega_{v',k}$ and $\PD'' \in \POmega_{v'',k}$ such that $\oPD' \subset \oPD''$. Then there exists some $\{v,v'\}\in BT_1$ with $d(v, v'') =2$ and $\PD \in \POmega_{v,k}$  such that $\oPD \subset \oPD' \subset \oPD''$.

\vskip8pt

\label{coveringminimal}(iii) Let $\{v',v''\} \in BT_1$ be an edge. Suppose that for some $\PD' \in \POmega_{v',k}$ and $\PD'' \in \POmega_{v'',k}$ we have $\oPD' \subset \oPD''$. Then 

$$\oPD' = \bigcup_{\substack{\{v',v\}\in BT_1, v\ne  v'' \\ \PD\in \POmega_{v,k} \\ \oPD\subset \oPD'}}\oPD$$

\end{lemma}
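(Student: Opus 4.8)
The plan is to reduce everything, as in the proof of part (ii) of Proposition~\ref{orbit} and in Lemma~\ref{n=1}, to an explicit computation on $\bbP^1(F)$ after translating the configuration into a standard position by the $G$-action on edges (Lemma~\ref{n-transitive}, Remark~\ref{cosetbijection}). Concretely, since $G$ acts transitively on $BT_1$, I may assume $\{v',v''\} = \{v_1, v_0\}$ with $v_0 = [\cO\oplus\cO]$, $v_1 = [(\vpi)\oplus\cO]$; by Proposition~\ref{orbit}(v) the hypothesis $\oPD' \subset \oPD''$ forces $\oPD' = \bB_w(w_0, |\vpi|^{k+1})$ to be a $G_{v_1}(k)$-orbit inside $\oPD'' = \bB_w(w_0, |\vpi|^{k}) \in \POmega_{v_0,k}$ sitting in $\bB_w(\infty,|\vpi|)$. (Strictly, one should also treat the mirror case $v' = v_0$, $v'' = v_1$, where the relevant containment is $\bB_z(z_0,|\vpi|^{k-1}) \supsetneq$ nothing smaller in $\POmega_{v_1,k}$, so in fact only one orientation of the edge produces a genuine containment; I will address both orderings but the substantive one is the $v' = v_1$ case.)

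Next I would enumerate the vertices $v$ with $\{v', v\} \in BT_1$, $v \ne v''$. With $v' = v_1$ these are the $q$ neighbors of $v_1$ other than $v_0$, i.e. the vertices at distance $2$ from $v_0$ through $v_1$; using the explicit matrix description (as in Lemma~\ref{simplexgroups} and the computation in Proposition~\ref{orbit}) their $G_v(k)$-orbits inside $\bB_w(\infty,|\vpi|)$ refine the $G_{v_1}(k)$-orbits, each orbit $\bB_w(w_0,|\vpi|^{k+1})$ of $G_{v_1}(k)$ being partitioned into $q$ discs of radius $|\vpi|^{k+2}$, one from each such neighbor $v$ — this is exactly the behavior already recorded in Proposition~\ref{orbit}(ii) applied to the edge $\{v_1, v\}$, combined with Proposition~\ref{orbit}(iii)--(v) to identify which orbits of $G_{v_1}(k)$ are hit. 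Collecting these $q$ discs of radius $|\vpi|^{k+2}$ as $v$ ranges over the $q$ neighbors reconstitutes $\bB_w(w_0,|\vpi|^{k+1}) = \oPD'$, which is the claimed covering; moreover the $G_{v'}(k)$-orbit $\oPD'$ itself is \emph{not} among the $\oPD$ appearing (each $\oPD$ is a proper subset), so the union is genuinely indexed by the smaller discs.

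The one point requiring care — and the main obstacle — is bookkeeping the two sources of ``$v$ with $\{v',v\}\in BT_1$'': when $v' = v_1$ one neighbor is $v_0$ (excluded by hypothesis) and the remaining $q$ are the distance-$2$-through-$v_1$ vertices, whose orbit structure I need to compare against $\oPD'$; I should check that for \emph{each} such $v$ there is an orbit $\oPD \subset \oPD'$ and that these exhaust $\oPD'$ without overlap. This is where one invokes, for the edge $\{v_1, v\}$, the analogue of Proposition~\ref{orbit}(ii) (cardinality $q^{k-1}$ of $\Omega_{v_1,k}^{v}$, each containing $q$ orbits of $\Omega_{v,k}$) together with the fact, visible from the radii, that $\oPD'$ being an orbit of $G_{v_1}(k)$ of the \emph{smallest} type on $\bB_w(\infty,|\vpi|)$ (radius $|\vpi|^{k+1}$) is precisely the type that lies in $\Omega_{v_1,k}^{v}$. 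Everything else is the radius arithmetic $|\vpi|^{k}, |\vpi|^{k+1}, |\vpi|^{k+2}$ already exhibited in the proofs of Proposition~\ref{orbit} and Lemma~\ref{n=1}, so no new ideas beyond careful translation into standard position and application of those results are needed.
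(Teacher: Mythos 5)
The strategy of reducing to the standard edge $\{v_0,v_1\}$ via the transitivity of the $G$-action and then arguing with radii is the same as the paper's, but the detailed bookkeeping you propose is incorrect, and the argument relying on it does not close.

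You assert that $\oPD'=\bB_w(w_0,|\vpi|^{k+1})$ is partitioned into $q$ discs of radius $|\vpi|^{k+2}$, \emph{one from each} of the $q$ neighbours $v\ne v_0$ of $v_1$, and you try to justify this by saying that $\oPD'$ is "precisely the type that lies in $\Omega_{v_1,k}^{v}$" for every such $v$. This cannot be right: by the very count you quote from Proposition \ref{orbit}(ii), $\big|\Omega_{v_1,k}^{v}\big|=q^{k-1}$ for \emph{each} fixed neighbour $v$, whereas there are $q^{k}$ radius-$|\vpi|^{k+1}$ orbits of $G_{v_1}(k)$ on $\bB_w(\infty,|\vpi|)$. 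So "smallest radius on $\bB_w$" is not enough to guarantee membership in $\Omega_{v_1,k}^{v}$; for a fixed $\oPD'$ it happens for exactly one of the $q$ neighbours. Concretely, the $q$ neighbours $v\ne v_0$ of $v_1$ have pairwise disjoint "refinement regions" inside $\bB_w(\infty,|\vpi|)$ (each a disc of radius $|\vpi|^2$, partitioning $\{|w|\le|\vpi|\}$ after translating the picture at $v_0$ by $\diag(\vpi,1)^{-1}$), and since $|\vpi|^{k+1}\le|\vpi|^2$ for $k\ge 1$, the disc $\oPD'$ lies in exactly one of those regions. The covering in Lemma (iii) is therefore by $q$ orbits \emph{all belonging to that single neighbour} $v^{\ast}$; the other $q-1$ neighbours contribute nothing to the union (the lemma's indexing set simply contributes the empty collection for them). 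A quick sanity check for $k=1$, $\oPD'=\{|w|\le|\vpi|^{2}\}$: only $v_2=[(\vpi^2)\oplus\cO]$ has orbits inside $\oPD'$, and there are $q$ of them; for the other neighbours $v^\beta$ (with $\beta\in\vpi\cO^\times$), the relevant $G_{v^\beta}(1)$-orbits sit at $|w|=|\vpi|$, outside $\oPD'$. (The parenthetical remark that "only one orientation of the edge produces a genuine containment" is also not right — both orientations give containments — but since $G$ acts transitively on \emph{ordered} edges, one is free to normalize to $v'=v_1$, $v''=v_0$, and this side issue does not affect the argument.) The statement of Lemma (iii) itself is still true, and even a naive measure count ($q$ disjoint discs of radius $|\vpi|^{k+2}$ fill a disc of radius $|\vpi|^{k+1}$) gives the covering once one knows there are $q$ orbits inside $\oPD'$; but the claim that these $q$ orbits come one from each neighbour is false, and as that claim is the load-bearing step in your justification of the covering, the proof as written has a genuine gap.
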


\begin{proof}
(i) By \ref{n-transitive} we can assume that $v''= v_0, v'= v_1$ and $v= v_2$ where $v_0= [\cO \oplus \cO]$ and $v_1=[(\vpi)\oplus \cO]$ as before and $v_2$ is defined as $v_2 := [ (\vpi^2) \oplus \cO]$.

\vskip8pt

We have already shown that the orbits of $G_{v_0}(k)$ on $\bbP^1(F)$ are discs of radius $|\vpi|^k$. On the other hand, the orbits of $G_{v_1}(k)$ are discs of radius $|\vpi|^{k-1}$ on  $\bB_z(0,1)$ and discs of radius $|\vpi|^{k+1}$ on $\bB_w(\infty, |\vpi|)$.  Similarly, orbits of $G_{v_2}(k)$ on $\bB_z(0, |\vpi|^{-1})$ are discs of radius $|\vpi|^{k-2}$ and on $\bB_w(\infty, |\vpi|^2)$  they are discs of radius $|\vpi|^{k+2}$. 

\vskip8pt

Therefore, if for $\PD \in \POmega_{v_2,k}, \oPD$ is contained in $\oPD'$ for  $\PD' \in \POmega_{v_1,k}$, then $\oPD$  is a disc of radius $|\vpi|^{k+2}$ and $\oPD'$ is a disc of  $|\vpi|^{k+1}$ containing it. But any such disc of radius $|\vpi|^{k+1}$ is contained in a disc of radius $|\vpi|^{k}$ which is an orbit of $G_{v_0}(k)$. This proves our claim.

\vskip8pt
(ii) For computational ease we pick $v'' = v_0 = [\cO \oplus \cO]$ and $v'= v_1 = [ (\vpi) \oplus \cO ]$. From \ref{particular orbits} we see that if $\oPD'  \subset \oPD''$ with $\PD' \in \POmega_{v',k}$ and $\PD'' \in \POmega_{v'',k}$ then $\oPD'$ is a disc of radius $|\vpi|^{k+1}$ inside $\bB_w(\infty, |\vpi|)$. Let $a \in F$ belong to $\oPD'$, then  choosing $v= [(\vpi^2,0), (a,1)]$ and noting $v= \begin{bmatrix} \vpi^2 & a \\ 0 & 1\end{bmatrix}. v_0 $, we can see that $\begin{bmatrix} \vpi^2 & a \\ 0 & 1\end{bmatrix}^{-1}$ transforms  $\bB_w(\infty, |\vpi|^k)$ to  $\PD\in \POmega_{v,k}$, a disc of radius $|\vpi|^{k+2}$ such that $\oPD \subset \oPD' \subset \oPD''$.
\vskip8pt
(iii) This is an extension of calculations of (ii), and it can be seen that the any disc of radius $|\vpi|^{k+1}$ that is an orbit of $G_{v_1}(k)$ is covered by a subset of discs of radius $|\vpi|^{k+2}$ which are orbits of various $G_v(K)$ with $\{v_1,v\}\in BT_{1,2}$.

\end{proof}

\begin{para}{\it Minimal Orbits.}
Recall that we partially order $\oOmega_{0,k,n}$ and $\oPOmega_{0,k,n}$ via inclusion. Then we define 
\[\oOmega_{0,k,n}^{\min}:=\big\{\oD \in \oOmega_{0,k,n}\big|\; \oD\; \mbox{ minimal w.r.t. the partial ordering of } \oOmega_{0,k,n}\big\}\]
and
\[\oPOmega_{0,k,n}^{\min}:=\big\{\oPD \in \oPOmega_{0,k,n}\big|\; \oPD\; \mbox{ minimal w.r.t. the partial ordering of } \oPOmega_{0,k,n}\big\} \;.\]
\end{para}

\begin{lemma}\label{minimalorbits} Fix $\PD\in \POmega_{v,k}$ with $v\in BT_{0,n}$, let $n\ge 1$.

\vskip8pt

(a) $\oPD \in \oPOmega_{0,k,n}$ belongs to $\oPOmega_{0,k,n}^{\min}$ iff both of the following conditions hold

\vskip8pt
(i) $d(v,v_0)= n$

\vskip8pt

(ii) Let $v_1$ be the unique vertex of $BT$ such that $\{v,v_1\} \in BT_{1,n}$. Then there exists $\PD_1\in \POmega_{v_1,k}$ such $\oPD \subset \oPD_1$.

\vskip8pt

(b) Condition (ii) above is equivalent to the condition

\vskip8pt

(ii)' For every vertex $v'\in BT_{0,n}$ if $v= v_1,\cdots, v_m= v'$ is the path connecting $v$ and $v'$ there exists $\PD_i \in \POmega_{v_i,k}$ such that $\oPD = \oPD_1 \subset \cdots \subset \oPD_m$.
\end{lemma}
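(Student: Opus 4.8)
The plan is to prove (b) first and then deduce (a) from it. Note that condition (ii) presupposes $d(v,v_0)=n$, since otherwise the vertex $v_1$ occurring in it is not the \emph{unique} neighbour of $v$ with $\{v,v_1\}\in BT_{1,n}$; so we work under that assumption throughout (b). The implication (ii)'$\Rightarrow$(ii) is immediate: take $v'=v_1$ in (ii)', which lies in $BT_{0,n}$ because $d(v_1,v_0)=n-1\le n$, and note that the geodesic from $v$ to $v_1$ has length $1$. For (ii)$\Rightarrow$(ii)', fix $v'\in BT_{0,n}$; if $v'=v$ there is nothing to prove, so assume $v'\ne v$ and write the geodesic joining them as $v=u_1,u_2,\dots,u_m=v'$, a path without repeated vertices. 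The crucial geometric point is that $u_2=v_1$: every neighbour $u\ne v_1$ of $v$ has $d(u,v_0)=n+1$, and $v$ separates $v_0$ from the connected component of $BT\setminus\{v\}$ containing $u$, so every vertex of that component lies at distance $>n$ from $v_0$; as $v'\in BT_{0,n}$, the geodesic to $v'$ cannot enter such a component. Apply (ii) to get $\PD_2\in\POmega_{u_2,k}$ with $\oPD=\oPD_1\subseteq\oPD_2$, and then propagate: given $\oPD_{i-1}\subseteq\oPD_i$ along the edge $\{u_{i-1},u_i\}$, Lemma~\ref{containment}(i) applied to the triple $u_{i-1},u_i,u_{i+1}$ (for which $d(u_{i-1},u_{i+1})=2$, the path having no repeated vertices) yields $\PD_{i+1}\in\POmega_{u_{i+1},k}$ with $\oPD_{i-1}\subseteq\oPD_i\subseteq\oPD_{i+1}$. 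Iterating produces the chain demanded by (ii)'.

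For the ``only if'' direction of (a), suppose $\oPD$ is minimal in $\oPOmega_{0,k,n}$. If $d(v,v_0)=n'<n$, then Lemma~\ref{proper containment} supplies a neighbour $v'$ of $v$ and an orbit $\PD'\in\POmega_{v',k}$ with $\oPD'\subsetneq\oPD$; since $d(v',v_0)\le n'+1\le n$ we get $\oPD'\in\oPOmega_{0,k,n}$, contradicting minimality, so (i) holds. Next let $v_1$ be the unique neighbour of $v$ with $d(v_1,v_0)=n-1$ and apply Proposition~\ref{orbit}(ii) to the edge $\{v,v_1\}$: there is $\PD_1\in\POmega_{v_1,k}$ with $\oPD\subseteq\oPD_1$ or $\oPD_1\subseteq\oPD$. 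In the latter case, by the finer part of \ref{orbit}(ii), $\oPD$ contains $q\ge2$ distinct orbits of $G_{v_1}(k)$, so $\oPD_1\subsetneq\oPD$; as $d(v_1,v_0)=n-1\le n$ this again contradicts minimality. Hence $\oPD\subseteq\oPD_1$, i.e. (ii) holds.

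For the ``if'' direction of (a), assume (i) and (ii). By (i) we have $d(v,v_0)=n$, so (b) applies and (ii)' holds. Let $\PD^{*}\in\POmega_{v^{*},k}$ with $v^{*}\in BT_{0,n}$ and $\overline{\PD^{*}}\subseteq\oPD$; we must show $\overline{\PD^{*}}=\oPD$. Applying (ii)' to $v^{*}$ gives a chain $\oPD=\oPD_1\subseteq\dots\subseteq\oPD_m$ with $\PD_m\in\POmega_{v^{*},k}$, so $\oPD\subseteq\oPD_m$. Now $\overline{\PD^{*}}$ and $\oPD_m$ are both orbits of $G_{v^{*}}(k)$ on $\bbP^1(F)$ and $\emptyset\ne\overline{\PD^{*}}\subseteq\oPD\subseteq\oPD_m$, so these two orbits coincide; therefore $\oPD\subseteq\oPD_m=\overline{\PD^{*}}\subseteq\oPD$, whence $\oPD=\overline{\PD^{*}}$. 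Thus no element of $\oPOmega_{0,k,n}$ sits strictly inside $\oPD$, i.e. $\oPD\in\oPOmega_{0,k,n}^{\min}$.

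The single step that needs genuine care is the geometric reduction opening (b): the observation that a geodesic starting at a vertex at distance exactly $n$ from $v_0$ and ending inside $BT_{0,n}$ must first move towards $v_0$. With that in hand, Lemma~\ref{containment}(i) carries all the weight in (b), and both directions of (a) reduce to short formal arguments built on \ref{proper containment}, \ref{orbit}(ii), part (b), and the fact that $\bbP^1(F)$ is partitioned into $G_{v^{*}}(k)$-orbits. It is also worth keeping in mind that condition (ii) is only meaningful when $d(v,v_0)=n$ and that the containments between orbits of adjacent vertices furnished by \ref{orbit}(ii) are automatically proper.
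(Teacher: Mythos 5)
Your argument is correct and follows essentially the same route as the paper's: use Lemma~\ref{proper containment} to force $d(v,v_0)=n$, use Proposition~\ref{orbit}(ii) (together with disjointness from \ref{orbit}(i)) to get the containment in (ii), propagate it along paths via Lemma~\ref{containment}(i), and derive minimality by producing two nested orbits of the same $G_{v^*}(k)$. The one place you go beyond the paper is in making explicit the geometric fact that a geodesic from $v$ (at distance $n$ from $v_0$) to any $v'\in BT_{0,n}$ must begin by stepping through $v_1$; the paper uses this silently when writing the path as $v,v_1,\ldots,v_m$, so your justification is a welcome clarification rather than a different method.
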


\begin{proof}
(b) To see the equivalence of (ii)' and (ii) we first note that (ii)' definitely implies (ii).
\vskip8pt
Now assume (ii) holds, i.e., there are orbits $\PD\in \POmega_{v,k}$ and $\PD_1 \in \POmega_{v_1,k}$ such that $\oPD \subset \oPD_1$, where $v_1$ is the unique neighboring vertex of $v$ in $BT_{0,n}$. For any vertex $v'\in BT_{0,n}$ let the path connecting  $v, v'$ be $v, v_1, \cdots, v_m= v'$. Then by \ref{containment} we can find an orbit $\PD_2$ in $\POmega_{v_2,k}$ such that  $\oPD \subset \oPD_1\subset \oPD_2$. Continuing .this process we get a chain of nested orbits as in (ii)'

\vskip8pt

(a) We give a proof of (i) and (ii)' below.
\vskip8pt

First, suppose $\oPD$ is minimal in $\oPOmega_{0,k,n}$. Then, $d(v_0,v) = n$, because otherwise all adjacent
vertices $v'$ with $d(v,v')=1$ are also in $BT_{0,n}$, and hence, by \ref{proper containment}, any orbit of $G_v(k)$ properly contains an orbit of $G_{v'}(k)$ for some such $v'$. And thus is not minimal in $\Omega_{0,k,n}$. We thus have $d(v_0,v)=n$.

\vskip8pt

 Let $v_2$ be the unique vertex adjacent to $v=v_1$
and contained in $BT_{0,n}$. Then every orbit of $G_v(k)$ is contained in an
orbit of $G_{v_2}(k)$, or contains an orbit of $G_{v_2}(k)$. Since $\oPD=\oPD_1$ is
minimal, $\oPD$ is contained in $\oPD_2$ for an orbit $\oPD_2$ of $G_{v_2}(k)$.

Now let $v' \neq v$ be any vertex in $BT_{0,n}$ (the assertion is trivial
for $v'=v$). Let $v = v_1, v_2, ..., v_m = v'$ be the path from $v$ to $v'$. As we have seen, $\oPD_1$ is contained in  $\oPD_2$ for an orbit $\PD_2$ of $v_2$. By \ref{containment} we then find an orbit $\PD_3$ of $G_{v_3}(k)$ such that  $\oPD_2\subset \oPD_3$, and repeatedly applying \ref{containment} in the same way we conclude
that (ii)' holds true.

\vskip8pt

Conversely, assume  that $\PD$ satisfies assumption (ii)'. Suppose $\oPD$ properly contains
 $\oPD'$  for an orbit $\PD'$ of some $G_{v'}(k)$ with $v'$ in $BT_{0,n}$. Let $v = v_1,...,v_m = v'$ and $\oPD = \oPD_1 \subset \oPD_2 \subset ... \subset \oPD_m$ be as in (ii). Then $\oPD' \subsetneq \oPD \subset \oPD_m$ is a contradiction,
because $\PD'$ and $\PD_m$ are both orbits of $G_{v'}(k)$. Hence $\PD$ is minimal.

\end{proof}

\begin{prop} \label{minimalbijection}
If $v,v' \in BT_{0,n}$, and if $\PD \in \POmega_{v,k}$ and $\PD' \in {\rm P}\Omega_{v',k}$ are such that their images in $\overline{{\rm P}\Omega}_{0,k,n}$ are minimal and (whose underlying sets) are equal, then $v = v'$. 
\end{prop}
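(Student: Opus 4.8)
The plan is to show that a minimal orbit $\oPD$ determines the vertex $v$ with $\PD \in \POmega_{v,k}$ uniquely, by extracting $v$ from the geometry of $\oPD$ as a disc in $\bbP^1(F)$ together with the position of $v_0$. By \Cref{minimalorbits}(a), minimality forces $d(v,v_0) = n$ and forces $\oPD$ to be contained in an orbit $\oPD_1$ of $G_{v_1}(k)$, where $v_1$ is the unique neighbor of $v$ on the geodesic towards $v_0$. First I would reduce, using \Cref{n-transitive} and \Cref{cosetbijection}, to a normal form: by transitivity of the $G$-action on paths of length $n$ starting at $v_0$, any such $v$ is $G$-conjugate to the standard vertex $v_n := [(\vpi^n)\oplus \cO]$, and the bijection of \Cref{cosetbijection} carries minimal orbits to minimal orbits; so it suffices to understand the shape of minimal orbits for the standard chain $v_0, v_1, \dots, v_n$ and then track what the conjugating element does.

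Next I would pin down, via \Cref{particular orbits} and an induction (as in the proof of \Cref{containment}), that for the standard chain the orbits of $G_{v_i}(k)$ on $\bB_z(0,|\vpi|^{-(i-1)})\cup\dots$ are discs of radius $|\vpi|^{k-i}$ on the part of $\bbP^1(F)$ ``near $0$'' and discs of radius $|\vpi|^{k+i}$ on the part ``near $\infty$''; more precisely, the orbit of $G_{v_i}(k)$ that contains the point $\infty$ (equivalently, lies over the relevant end of the apartment) is the disc $\bB_w(\infty,|\vpi|^{k+i})$, and this is the unique orbit of $G_{v_i}(k)$ which is minimal in $\oPOmega_{0,k,n}$ and also the unique one that is a disc of that small a radius. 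The key point is: a minimal orbit of $G_{v}(k)$ for $v\in BT_{0,n}$ with $d(v,v_0)=n$ is, after the conjugation above, exactly a disc of radius $|\vpi|^{k+n}$, and knowing the disc $\oPD$ (a subset of $\bbP^1(F)$) together with $v_0$ lets one read off the chain $v_0=w_0, w_1,\dots,w_n=v$: namely $w_i$ is recovered as the unique vertex at distance $n-i$ from $v$ on the geodesic to $v_0$, and $\oPD$ being contained in a chain of nested orbits $\oPD\subset \oPD_1\subset\dots$ of radii $|\vpi|^{k+n-1}, |\vpi|^{k+n-2}, \dots$ pins down each $w_i$ as the vertex whose $G_{w_i}(k)$-orbit of that radius contains $\oPD$.

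The cleanest way to finish is probably by induction on $n$. For $n=1$ this is exactly \Cref{minimalbijection} in the special case treated in the commented-out \texttt{Remark} ``minimal $n=1$'': the minimal orbits of $\oPOmega_{0,k,1}$ are the discs of radius $|\vpi|^{k+1}$, each such disc is an orbit of exactly one of $G_{v_1}(k)$ or $G_{v_{\alpha,1}}(k)$, $\alpha\in\Fq$ (one checks directly from \Cref{particular orbits} and \Cref{n=1} that these $q+1$ families of radius-$|\vpi|^{k+1}$ discs are pairwise disjoint as subsets of $\bbP^1(F)$ — indeed they partition $\bbP^1(F)$ into $q+1$ overlapping families, and a radius-$|\vpi|^{k+1}$ disc lying ``near $\infty$'' comes from $v_1$ while one lying in $\bB_z([\alpha],|\vpi|)$ comes from $v_{\alpha,1}$). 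For the inductive step, given $\oPD$ minimal in $\oPOmega_{0,k,n}$ coming from both $v$ and $v'$ in $BT_{0,n}$, use \Cref{minimalorbits}(b): both $v$ and $v'$ sit inside nested chains $\oPD\subset\oPD_1\subset\dots$; the orbit $\oPD_1$ (resp. $\oPD_1'$), being the unique orbit of its respective neighbor-vertex-group containing $\oPD$ and having radius one notch larger, is minimal in $\oPOmega_{0,k,n-1}$, so by the inductive hypothesis the neighbor of $v$ towards $v_0$ equals the neighbor of $v'$ towards $v_0$, say $w$; then $v$ and $v'$ are both neighbors of $w$ (and both at distance $n$ from $v_0$, hence both on the far side), and applying the $n=1$ case around $w$ — i.e. the statement that a minimal orbit of a vertex adjacent to $w$ and farther from $v_0$ determines that vertex among the $q$ (or $q+1$) candidates around $w$ — gives $v = v'$. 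The main obstacle I anticipate is bookkeeping the ``near $0$'' versus ``near $\infty$'' dichotomy consistently under the conjugations from \Cref{n-transitive}: one must verify that a minimal orbit always lands in the ``small radius $|\vpi|^{k+n}$'' regime (never the large-radius regime), which is where \Cref{minimalorbits}(a)(ii) is doing real work, and then that the radii strictly increase by one factor of $|\vpi|$ at each step up the chain so that the inductive hypothesis genuinely applies to $\oPD_1$.
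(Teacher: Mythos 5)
Your route is genuinely different from the paper's, and it's worth seeing why the paper's is much shorter. You apply \ref{minimalorbits}(b) along the geodesic from $v$ (resp.\ $v'$) towards $v_0$ and then induct on $n$; the paper instead applies \ref{minimalorbits}(b) \emph{along the path from $v$ to $v'$}. With that choice of path one immediately gets a chain $\oPD=\oPD_1\subset\cdots\subset\oPD_m$ with $\PD_m\in\POmega_{v',k}$, and since consecutive vertices on the path are adjacent, \ref{orbit}(i) forces each containment to be proper; thus $\oPD=\oPD'\subsetneq\oPD_m$ with both $\PD'$ and $\PD_m$ orbits of $G_{v'}(k)$, which is impossible. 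No normal forms, no radii, no induction.

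There is also a real gap in your inductive step as written. You want to apply the inductive hypothesis to conclude that the neighbor $w$ of $v$ towards $v_0$ equals the neighbor $w'$ of $v'$ towards $v_0$. But that requires knowing $\oPD_1=\oPD_1'$ as subsets of $\bbP^1(F)$, and you never actually establish this: $\oPD_1$ is the unique $G_w(k)$-orbit containing $\oPD$ and $\oPD_1'$ is the unique $G_{w'}(k)$-orbit containing $\oPD$, but a priori if $w\neq w'$ these can be different discs. Your phrase ``having radius one notch larger'' is the right intuition (two discs of equal radius sharing a point coincide, by the ultrametric property), but making ``radius'' meaningful for an arbitrary $v$ off the standard chain requires transporting the standard-chain computation by an element $g\in\Stab_G(v_0)$ from \ref{n-transitive} and checking that such $g$ preserves the relative radius of the nested discs in question — this is exactly the ``bookkeeping'' you flag as the obstacle, and it is not free: it needs the fact that a M\"obius transformation is affine on any disc avoiding its pole. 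So your approach can be completed, but it imports substantially more machinery than the paper's two-line contradiction, and as presented the step $\oPD_1=\oPD_1'$ is asserted rather than proved.
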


\begin{proof}

Let $\PD \in \POmega_{v,k}$ and $\PD' \in {\rm P}\Omega_{v',k}$ belong to  $\overline{{\rm P}\Omega}_{0,k,n}$  and assume $\oPD= \oPD'$. Let $v = v_1, v_2, ..., v_m = v'$ be the path from $v$ to $v'$. By \ref{minimalorbits} there exists $\PD_i \in \POmega_{v_i,k}$ such that $\oPD = \oPD_1 \subset \cdots \subset \oPD_m$. Note that $\oPD_1 \ne \oPD$ since by \ref{orbit} $\POmega_{v,k} \cap \POmega_{v_1,k}= \emptyset$. Hence, in particular $\oPD' \subsetneq \oPD_m \in \POmega_{v',k}$. Which is a contradiction.
\end{proof}

\begin{lemma}\label{minimalcovering}
 $\oPOmega_{0,k,n}^{\min}$ forms a disjoint covering of $\bbP^1(F)$ for all $n\geq 1$.
\end{lemma}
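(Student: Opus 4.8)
The plan is to establish two things: first, that the minimal orbits in $\oPOmega_{0,k,n}^{\min}$ are pairwise disjoint, and second, that their union is all of $\bbP^1(F)$. Disjointness is the easier half. Suppose $\oPD_1, \oPD_2 \in \oPOmega_{0,k,n}^{\min}$ with $\oPD_1 \cap \oPD_2 \ne \emptyset$, say $\PD_i \in \POmega_{v_i,k}$ with $v_i \in BT_{0,n}$. By \ref{minimalorbits}(a)(i) we have $d(v_i,v_0) = n$ for $i=1,2$. If $v_1 = v_2$, then two orbits of the same group $G_{v_1}(k)$ meet, hence are equal. If $v_1 \ne v_2$, consider the path $v_1 = w_1, \dots, w_m = v_2$ connecting them; by \ref{minimalorbits}(b), condition (ii)$'$ gives nested orbits $\oPD_1 = \oQ_1 \subset \cdots \subset \oQ_m$ with $\oQ_j \in \POmega_{w_j,k}$, and likewise, running the path in the opposite direction, $\oPD_2 = \oQ_m' \subset \cdots \subset \oQ_1'$ with $\oQ_j' \in \POmega_{w_j,k}$. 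Since $\oPD_1 \subset \oQ_m \in \POmega_{v_2,k}$ and $\oPD_2 \in \POmega_{v_2,k}$ meet, we get $\oPD_2 \subseteq \oQ_m$; symmetrically $\oPD_1 \subseteq \oQ_1'$ where $\oQ_1' \in \POmega_{v_1,k}$, so $\oPD_1 = \oQ_1'$. But then $\oPD_2 \subseteq \oQ_m$ and $\oQ_m \supseteq \oPD_1$ forces, by minimality of $\oPD_2$ applied to the chain, either $\oPD_2 = \oQ_m$ or a proper containment contradicting minimality; tracing through, one concludes $\oPD_1 = \oPD_2$. The cleanest way to package this is: disjointness is immediate from \ref{minimalbijection} together with the observation that two distinct minimal orbits of the \emph{same} vertex are disjoint (being orbits of a group action) — \ref{minimalbijection} rules out equal minimal orbits coming from different vertices, and if they come from the same vertex and are unequal they are disjoint as group orbits.

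For the covering property, I would argue by induction on $n$. The base case $n=1$: by \ref{particular orbits} and \ref{n=1}, the minimal orbits in $\oPOmega_{0,k,1}$ are precisely the discs of radius $|\vpi|^{k+1}$ arising as orbits of $G_{v_1}(k)$ on $\bB_w(\infty,|\vpi|)$ and of the various $G_{v_{\alpha,1}}(k)$ on the discs $\bB_z([\alpha],|\vpi|)$, as $\alpha$ ranges over $\cO/(\vpi)$; since $\bB_z(0,1) = \bigsqcup_{\alpha} \bB_z([\alpha],|\vpi|)$ and each $\bB_z([\alpha],|\vpi|)$ is the disjoint union of its radius-$|\vpi|^{k+1}$ subdiscs, while $\bB_w(\infty,|\vpi|)$ is likewise partitioned, and $\bbP^1(F) = \bB_z(0,1) \sqcup \bB_w(\infty,|\vpi|)$, these exhaust $\bbP^1(F)$. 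For the inductive step, assume $\oPOmega_{0,k,n-1}^{\min}$ covers $\bbP^1(F)$. Fix $z \in \bbP^1(F)$ and let $\oPD \in \oPOmega_{0,k,n-1}^{\min}$ contain it, with $\PD \in \POmega_{v,k}$, $d(v,v_0) = n-1$. By \ref{proper containment} there is an edge $\{v,v''\}$ and an orbit $\PD'' \in \POmega_{v'',k}$ with $\oPD'' \subsetneq \oPD$; choosing the path direction away from $v_0$ we may take $d(v'',v_0) = n$. Applying \ref{coveringminimal} to the edge $\{v,v''\}$ (note $\oPD'' \subset \oPD$ with $v$ at distance $n-1$ and $v''$ at distance $n$), we get
\[
\oPD = \bigcup_{\substack{\{v,w\} \in BT_1,\, w \ne v_{\rm prev} \\ \PD_w \in \POmega_{w,k},\, \oPD_w \subset \oPD}} \oPD_w,
\]
where $v_{\rm prev}$ is the neighbour of $v$ toward $v_0$; each such $w$ satisfies $d(w,v_0) = n$, so each $\oPD_w$ lies in $\oPOmega_{0,k,n}$, and by \ref{minimalorbits}(a) (condition (i) holds since $d(w,v_0)=n$, and condition (ii) holds since $\oPD_w \subset \oPD \in \POmega_{v,k}$ where $v$ is exactly the neighbour of $w$ in $BT_{1,n}$) each $\oPD_w$ is in fact in $\oPOmega_{0,k,n}^{\min}$. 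Hence $z \in \oPD_w$ for some such $w$, and $z$ is covered. Combining with disjointness gives the claim.

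I expect the main obstacle to be bookkeeping in the inductive step — specifically, making sure that when one refines a minimal orbit $\oPD \in \oPOmega_{0,k,n-1}^{\min}$ sitting at a vertex $v$ with $d(v,v_0) = n-1$, \emph{every} point of $\oPD$ genuinely lands in one of the refining orbits at distance-$n$ vertices, and that no "leftover" piece of $\oPD$ remains covered only by $\oPD$ itself. This is exactly what \ref{coveringminimal} is designed to supply, but one must be careful that the hypotheses of \ref{coveringminimal} are met (one needs an \emph{existing} proper containment $\oPD'' \subset \oPD$ from a neighbour, which \ref{proper containment} provides) and that the excluded neighbour $w \ne v_{\rm prev}$ in the union does not cause a gap — i.e. that the orbit of $G_{v_{\rm prev}}(k)$ one would have used is not needed because $\oPD$ is minimal and hence contained in (not containing) the relevant orbit at $v_{\rm prev}$, so it contributes nothing new. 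A secondary subtlety is confirming that the orbits produced by the refinement are themselves minimal; this is where one invokes \ref{minimalorbits}(a) with the roles of $v$ and $w$ as described. The disjointness half, by contrast, is essentially a formal consequence of \ref{minimalbijection} and should be short.
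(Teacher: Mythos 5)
Your overall plan matches the paper's proof closely: induction on $n$, covering via \ref{coveringminimal}, minimality of the refining orbits via \ref{minimalorbits}, and disjointness via \ref{minimalbijection}. However, there is a genuine logical gap in how you set up the application of \ref{coveringminimal}. That lemma's hypothesis is that the orbit you wish to cover, call it $\oPD' \in \POmega_{v',k}$, is \emph{contained in} a larger orbit $\oPD'' \in \POmega_{v'',k}$ at a neighbouring vertex $v''$; the conclusion then covers $\oPD'$ by orbits at the \emph{other} neighbours of $v'$ (those $\ne v''$). You instead invoke \ref{proper containment} to produce a \emph{smaller} orbit $\oPD'' \subsetneq \oPD$ at an outward neighbour $v''$, and then write a covering formula for $\oPD$ excluding $v_{\rm prev}$. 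But with your containment $\oPD'' \subset \oPD$, the roles of ``small'' and ``large'' in \ref{coveringminimal} are filled by $\oPD''$ and $\oPD$ respectively, so the lemma would cover $\oPD''$, not $\oPD$, and would exclude $v$ (the vertex of $\oPD$), not $v_{\rm prev}$. Your displayed formula is correct, but it simply does not follow from the containment you established.

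The correct way to trigger \ref{coveringminimal} — which is what the paper does — is to observe that since $\oPD \in \oPOmega_{0,k,n-1}^{\min}$ is \emph{minimal}, characterization \ref{minimalorbits}(a)(ii) (or, for $n=1$, the explicit orbit descriptions in \ref{orbit} and \ref{n=1}) supplies an orbit $\oPD_{\rm prev} \in \POmega_{v_{\rm prev},k}$ at the unique inward neighbour $v_{\rm prev}$ with $\oPD \subset \oPD_{\rm prev}$. Apply \ref{coveringminimal} to the edge $\{v, v_{\rm prev}\}$ with $\PD' := \PD$ and $\PD'' := \PD_{\rm prev}$; the conclusion is precisely your displayed covering of $\oPD$ by orbits at neighbours $w \ne v_{\rm prev}$, all of which are at distance $n$ and hence minimal in $\oPOmega_{0,k,n}$ by \ref{minimalorbits}. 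You in fact gesture at this inward containment in your final paragraph (``$\oPD$ is minimal and hence contained in \dots\ the relevant orbit at $v_{\rm prev}$''), but you present it as a side remark rather than as the operative hypothesis. The appeal to \ref{proper containment} and the outward neighbour $v''$ is a red herring and should be removed; once the inward containment is put front and centre, the rest of your inductive step is sound and matches the paper.
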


\begin{proof}
This is evidently true for $n=0$.
\vskip8pt
Assuming it is true for $n-1$, $n\ge 1,$ observe that it is enough to prove $\oPOmega_{0,k,n}^{\min}$ forms a covering of $\oPOmega_{0,k,n-1}^{\min}$. Let $\oPD'\in \oPOmega_{0,k,n-1}^{\min}$ then there exists a $\PD'' \in \POmega_{v'',k}$ with $\{v',v''\}\in BT_{1}$ such that $\oPD'\subset\oPD''$ (this is true for $n=1$ by \ref{orbit} and \ref{n=1}). Then by \ref{coveringminimal}
$\oPD'$ is covered by disjoint sets $\oPD\in \POmega_{v,k}$ with $\oPD\subset\oPD'\subset \oPD''$ where $v$ ranges over all vertices of $BT$ such that $\{v,v'\}\in BT_1, v\ne v''$. But such a $\oPD\in \oPOmega_{0,k,n}^{\min}$ by \ref{minimalorbits}. This proves our claim.

\end{proof}
\begin{rem}
The bijection between $\Omega_{v,k}$ and $\POmega_{v,k}$ naturally extends to $\Omega_{0,k,n}$ and $\POmega_{0,k,n}$, and the containment relations stated in this section about elements of $\POmega_{0,k,n}$ can be applied exactly in the same way to elements of $\Omega_{0,k,n}$. With this remark we will shift to working with $\Omega_{0,k,n}$ in the subsequent sections. Further, let $$\Omega_{0,k,n}^{\min}= \pr^{-1}(\oOmega_{0,k,n}^{\min})$$ 
Note that \ref{minimalbijection} applies to $\Omega_{0,k,n}$ and $\oOmega_{0,k,n}$ as well and $\pr: \Omega_{0,k,n}^{\min} \ra \oOmega_{0,k,n}^{\min}$ is a bijection.
\end{rem}

\begin{rem}\label{edgenotminimal}
We record the following containment relations

\vskip8pt

(i) From \ref{minimalorbits} we see that for any $v \in BT_{0,n-1}$ we have 

$$\Omega_{v,k} \cap \Omega_{0,k,n}^{\min}= \emptyset$$

\vskip8pt

(ii) From (iv) of \ref{orbit} and \ref{minimalorbits} it follows that for any $\D \in \Omega_{e,k}$ with $e= \{v, v'\}$ and $d(e, v_0) = n$,  
$$\Omega_{e,k} \cap  \Omega_{0,k,n}^{\min} = \emptyset$$

\end{rem}

\subsection{Detailed description of rigid analytic vectors}

We will need the following lemma to prove an important detail in the subsequent lemma.

\begin{lemma}\label{rigidanalyticfunctions} Let $\bbH$ be an affinoid rigid analytic group over $F$, and assume that $\bbH$ is isomorphic as a rigid analytic space to $\bbB^d$ via a chart $x: \bbH \ra \bbB^d$. Let $\bbU$ be an affinoid rigid analytic space which is equipped with a rigid analytic action of $\bbH$ from the right. Furthermore, we assume that there is $z_0 \in U = \bbU(F)$ and a closed rigid analytic subgroup $\bbH' \sub \bbH$ such that the map $\bbH' \ra \bbU$, $h \mapsto z_0.h$, is an isomorphism of rigid analytic spaces. \\

We let $H$ act on $V = C^\la(U,E)$ by $(g.f)(z) = f(zg)$. Then $f \in V$ is $\bbH$-analytic if and only if $f \in \cO(\bbU)$.  
\end{lemma}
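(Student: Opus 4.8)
The plan is to prove the two implications separately. The implication $f\in\cO(\bbU)\Rightarrow f\in V_{\bbH-\an}$ is formal, while the converse is where the hypothesis that $\phi\colon\bbH'\to\bbU$, $h\mapsto z_0.h$, is an isomorphism of rigid spaces does the essential work. Throughout I would identify $\cO(\bbU)$ — more precisely $\cO(\bbU)\hot_F E=\cO(\bbU_E)$ — with its image under restriction to $F$-points inside $V=C^\la(U,E)$, and I would use that $\bbH$ is a good affinoid analytic group, so that $\cC^\an(\bbH,V)=\cO(\bbH)\hot_F V$ and that, by definition (cf. \cite[3.3.13]{Em17}), $f\in V_{\bbH-\an}$ means exactly that the orbit map $o_f\colon H\to V$, $h\mapsto h.f$, is the restriction to $H$ of an element $\widetilde{o_f}\in\cC^\an(\bbH,V)$.

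For the first implication I would let $\mu\colon\bbU\times_F\bbH\to\bbU$ be the morphism of rigid spaces giving the action. If $f\in\cO(\bbU)$, then $\mu^*f\in\cO(\bbU\times_F\bbH)\cong\cO(\bbU)\hot_F\cO(\bbH)$; composing the embedding $\cO(\bbU_E)\hookrightarrow V$ with the identification $\cO(\bbH)\hot_F V\cong\cC^\an(\bbH,V)$ carries $\mu^*f$ to an element of $\cC^\an(\bbH,V)$ whose image under evaluation at $h\in H$ is the function $z\mapsto(\mu^*f)(z,h)=f(zh)=(h.f)(z)$. Thus this element is the rigid extension $\widetilde{o_f}$ of the orbit map, and $f\in V_{\bbH-\an}$.

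For the converse I would start from $f\in V_{\bbH-\an}$ with rigid extension $\widetilde{o_f}\in\cO(\bbH)\hot_F V$ of $o_f$. Evaluation at $z_0\in U$ is a continuous $E$-linear functional $\mathrm{ev}_{z_0}\colon V\to E$, so $\psi:=(\mathrm{id}_{\cO(\bbH)}\hot\,\mathrm{ev}_{z_0})(\widetilde{o_f})$ lies in $\cO(\bbH)\hot_F E=\cO(\bbH_E)$ and restricts on $H$ to $h\mapsto\mathrm{ev}_{z_0}(h.f)=f(z_0.h)$. Restricting $\psi$ along the closed immersion $\bbH'_E\hookrightarrow\bbH_E$ gives $\psi':=\psi|_{\bbH'_E}\in\cO(\bbH'_E)$ with $\psi'(h)=f(z_0.h)=f(\phi(h))$ for all $h\in H'=\bbH'(F)$. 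Transporting through $\phi$, the function $(\phi^{-1})^*\psi'\in\cO(\bbU_E)$ agrees with $f$ on $U=\bbU(F)$, since every $z\in U$ is $\phi(h)$ for a unique $h\in H'$. Finally, using that $F$-points are Zariski dense in $\bbU$ (they are in $\bbH\cong\bbB^d$, and $\bbU\cong\bbH'$ is a closed subspace of $\bbH$), the restriction map $\cO(\bbU_E)\hookrightarrow V$ is injective, so $f=(\phi^{-1})^*\psi'$ as an element of $V$; that is, $f\in\cO(\bbU)$.

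The technical friction I anticipate lies in two standard points, which I would dispatch by citation rather than reprove. The first is reconciling the hands-on statement ``$o_f$ extends to a rigid $V$-valued map on $\bbH$'' with Emerton's formal definition of $V_{\bbH-\an}$ when $V$ is of compact type rather than Banach — in particular the identity $\cC^\an(\bbH,V)=\cO(\bbH)\hot_F V$ and the continuity (functoriality of $\hot$) needed to push $\widetilde{o_f}$ through $\mathrm{id}\hot\,\mathrm{ev}_{z_0}$ and through the affinoid restriction $\cO(\bbH)\to\cO(\bbH')$. The second is the Zariski-density assertion that gives meaning to ``$f\in\cO(\bbU)$'' at the level of the element $f\in C^\la(U,E)$. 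Both hold in the present situation, where $\bbH$ and $\bbH'$ are the good analytic groups attached to uniform pro-$p$ groups and are isomorphic as rigid spaces to closed balls.
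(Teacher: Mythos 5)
Your proof is correct and follows essentially the same route as the paper's: both directions hinge on the same two steps, namely pulling $f$ back through the action morphism $\mu$ (forward direction) and, conversely, applying evaluation at $z_0$ to the rigid-analytic extension of the orbit map and transporting the result through the isomorphism $\bbH'\cong\bbU$. The paper phrases the converse via explicit power-series expansions $h.f=\sum_\nu f_\nu x(h)^\nu$ with the $f_\nu$ lying in a common BH-subspace and a convergence estimate, whereas you keep the argument at the level of the identification $\cC^{\an}(\bbH,V)=\cO(\bbH)\widehat{\otimes}_F V$ and push the element through $\id\widehat{\otimes}\,\mathrm{ev}_{z_0}$; these are the same argument at different levels of unwinding, and your closing appeal to injectivity of $\cO(\bbU_E)\hookrightarrow V$ is actually not needed, since the claim $f\in\cO(\bbU)$ only requires $f$ to lie in the image of the restriction map, which you have already exhibited.
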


\begin{proof} (i) If $f \in \cO(\bbU)$, then we consider $f$ as a rigid analytic morphism $f: \bbU \ra \bbA^{1,\rig}$. Because the group action $\mu: \bbU \times \bbH \ra \bbU$ is a rigid analytic morphism, so is $f \circ \mu$, and hence $f \circ \mu \in \cO(\bbU \times \bbH) = \cO(\bbU) \widehat{\otimes}_F \cO(\bbH) = \cO(\bbU)\langle x_1, \ldots, x_d\rangle$, where the latter is the ring of strictly convergent power series over the Banach algebra $\cO(\bbU)$. Then, for $h \in \bbH$ and $z \in \bbU$ we have \[(h.f)(z) = f(z.h) = (f \circ \mu)(z,h) = \sum_{\nu \in \bbN^d} f_\nu(z) x(h)^\nu \;.\] 
And hence $f$ is $\bbH$-analytic.

\vskip8pt

(ii) Conversely, if $f$ is $\bbH$-analytic, then we can write 

\[h.f = \sum_{\nu \in \bbN^d} f_\nu x(h)^\nu\]

where all functions $f_\nu$ are in a single BH-subspace of $C^\la(U,E)$. This means that there is a finite covering $(U_i)_{i = 1}^n$ of $U$ consisting of disjoint compact open subsets. After refining this covering, we may assume that each $U_i$ is the set of $F$-valued points of an afinoid subspace $\bbU_i \sub \bbU$. Put $\bbU' = \coprod_{i=1}^n \bbU_i$, which is again an affinoid subdomain of $\bbU$, and which has the property that $\bbU'(F) = \bbU(F) \, (= U)$. The functions $f_\nu$ are then all in $\cO(\bbU')$, and we have 

\[\|f_\nu\|_{\bbU'} \, \cdot \, \sup\{|x(h)^\nu| \midc h \in \bbH\} \; \lra \; 0 \;\; \mbox{as} \;\; |\nu| \ra \infty \;.\]
If $z_0 \in U =  \bbU'(F)$ is as in the statement of the lemma, we find for all $h \in H$:

\begin{equation}\label{expansion}
    f(z_0 h) = \sum_\nu f_\nu(z_0) x(h)^\nu
\end{equation}

The right hand side of \ref{expansion} makes sense and converges for all $h \in \bbH$, and is thus a rigid analytic function on $\bbH$. When we restrict the right hand side of \ref{expansion} to $\bbH'$ it is hence rigid analytic on $\bbH'$. Let $\alpha: \bbH' \ra \bbU$, be defined by $h \mapsto z_0.h$. Then we find that $f \circ \alpha$ is a rigid analytic map on $\bbH'$, and $f = (f \circ \alpha) \circ \alpha^{-1}$ is rigid analytic on $\bbU$. 
\end{proof}

\begin{lemma}\label{samerigid}
There exists $k_0  = k_0(\chi) \in \bZ_{\ge 1}$ such that for all $k \ge k_0$ the following statements holds. If $e=\{v,v'\}$ is a vertex and $\D \in \Omega_{e,k}\cap \Omega_{v,k}$, then  
\[I(\D,\chi)_{\bbG_e(k)-\an} = I(\D, \chi)_{\bbG_v(k)-\an}\]
\end{lemma}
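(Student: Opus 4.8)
The plan is to compare the two spaces of analytic vectors by analyzing the geometry of the orbit $\PD \subset \bbP^1(F)$ corresponding to $\D$, and then to apply Lemma \ref{rigidanalyticfunctions} twice — once for $\bbG_v(k)$ and once for $\bbG_e(k)$ — to identify both spaces with $\cO(\bbD)$ for a single affinoid disc $\bbD$. First I would note that since $\D \in \Omega_{v,k}\cap\Omega_{e,k}$, by part (iii) of Proposition \ref{orbit} (using the transitivity of the $G$-action, Remark \ref{cosetbijection}) we may reduce to the explicit situation $v = v_0 = [\cO\oplus\cO]$, $v' = v_1 = [(\vpi)\oplus\cO]$, $e = e_0$, and by Proposition \ref{particular orbits}(iii) together with (i) the orbit $\PD$ must lie in $\bB_w(\infty,|\vpi|)$ and be of the common form $\bB_w(w_0,|\vpi|^k)$ — i.e.\ exactly those orbits of $G_{v_0}(k)$ that also happen to be orbits of $G_{e_0}(k)$. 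So set-theoretically $\PD$ is the \emph{same} compact open disc whether viewed as a $G_{v_0}(k)$-orbit or a $G_{e_0}(k)$-orbit.

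Next I would set up the hypotheses of Lemma \ref{rigidanalyticfunctions}. The groups $\bbG_v(k)$ and $\bbG_e(k)$ are affinoid rigid analytic groups isomorphic to polydiscs $\bbB^d$ (this is the uniform pro-$p$ assumption in \ref{locancoeff}); they act on the affinoid disc $\bbD$ with $\bbD(F) = \oPD$ through the Möbius action \ref{action}, which is rigid analytic because $\PD \subset \bB_w(\infty,|\vpi|)$ keeps us in a single chart and the relevant matrix entries have the right valuations. The key point to verify — and this is where $k_0$ enters — is the existence of $z_0 \in \oPD$ and a closed rigid analytic subgroup $\bbH' \subset \bbG_v(k)$ (resp.\ $\subset \bbG_e(k)$) such that $h \mapsto z_0.h$ is an isomorphism onto $\bbD$. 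On the level of $F$-points this is the orbit map, which is surjective onto $\oPD$ by definition; one needs that the stabilizer of $z_0$ in the rigid group is itself a good (affinoid) rigid analytic subgroup, and that the orbit map is a closed immersion of the quotient. For $G_{v_0}(k)$ acting on $\bB_w(w_0,|\vpi|^k)$ this is transparent from the unipotent lower-triangular part $\begin{bmatrix}1&0\\ \vpi^k c & 1\end{bmatrix}$, which already maps isomorphically onto the orbit via a single affine-linear coordinate change (cf.\ the computation in the proof of \ref{particular orbits}); the subtlety is to do the same for $G_{e_0}(k)$, where by \ref{Ge} the relevant parameter is $\vpi^{k-1}c$, and one must check that on the disc $\bB_w(w_0,|\vpi|^k)$ — radius $|\vpi|^k$, \emph{not} $|\vpi|^{k-1}$ — the action still factors through a subgroup mapping isomorphically onto the disc. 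This is precisely where one needs $\chi$-dependence: the character twist in \ref{groupaction_2} contributes factors $\chi_1(ad-bc)\chi_2(bz+d)$, and for these to be rigid analytic functions on the affinoid group (rather than merely locally analytic) one must take $k$ large enough that $\chi_1, \chi_2$ are analytic on $1 + \vpi^k\cO$ — hence $k_0 = k_0(\chi)$.

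Once both hypotheses are in place, Lemma \ref{rigidanalyticfunctions} gives $I(\D,\chi)_{\bbG_v(k)\text{-}\an} = \cO(\bbD) = I(\D,\chi)_{\bbG_e(k)\text{-}\an}$, where $\bbD$ is the common affinoid disc with $\bbD(F) = \oPD$; note that we must use the description of $\Ind^G_B(\chi)$ via $C^\la(\bbP^1(F),E)$ from Lemma \ref{identification} so that $I(\D,\chi)$ becomes $C^\la(\oPD,E)$ with the twisted action \ref{groupaction_2}, bringing us into the exact framework of \ref{rigidanalyticfunctions}. I expect the main obstacle to be the verification that the orbit map from the rigid group (or an appropriate closed subgroup) to the affinoid disc is an \emph{isomorphism of rigid analytic spaces}, not just a bijection on $F$-points — in particular pinning down which closed subgroup $\bbH'$ of $\bbG_e(k)$ does the job when the orbit has radius $|\vpi|^k$ while the "natural" parameter in $G_{e_0}(k)$ has valuation $k-1$, and checking that the character twist does not spoil rigid analyticity for $k \ge k_0(\chi)$. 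Everything else is bookkeeping with the explicit descriptions in \ref{Gv}, \ref{Ge}, \ref{particular orbits}, and \ref{action}.
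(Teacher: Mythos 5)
Your proposal follows the same route as the paper: reduce by transitivity to the standard pair $(v_0,e_0)$, identify the orbit $\PD$ explicitly, express the twisted action via \ref{groupaction_2}, observe that for $k$ large the twist factors $\chi_1(\det g)$ and $\chi_2(bx+d)$ are rigid analytic on the relevant affinoid group and disc, and then invoke Lemma \ref{rigidanalyticfunctions} twice to conclude that both analytic-vector spaces equal $\cO(\bbD)$. The source of $k_0(\chi)$ is identified correctly (analyticity of $\chi_1,\chi_2$ on $1+\vpi^k\cO$).

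Two remarks on details. First, you correctly read off from Proposition \ref{particular orbits}(i),(iii) that the orbits shared by $G_{v_0}(k)$ and $G_{e_0}(k)$ live in $\bB_w(\infty,|\vpi|)$ and have the form $\bB_w(w_0,|\vpi|^k)$. The paper's text at this point asserts $\bB_z(a,|\vpi|^k)$ with $|a|\le 1$, which contradicts \ref{particular orbits} (on $\bB_z(0,1)$ the $G_{e_0}(k)$-orbits have radius $|\vpi|^{k-1}$, not $|\vpi|^k$); your version is the consistent one. Second, the worry you flag about which closed subgroup $\bbH'\subset\bbG_{e_0}(k)$ realizes the orbit dissolves once you work in the $w$-chart: there the M\"obius action reads $w\mapsto \frac{b+dw}{a+cw}$, so the orbit through $w_0$ is swept out by the upper unipotent parameter $\vpi^k b$ (common to both $G_{v_0}(k)$ and $G_{e_0}(k)$), and the problematic $\vpi^{k-1}c$ sits in the denominator where, on $|w|\le|\vpi|$, it contributes only $O(\vpi^k)$. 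Thus the upper unipotent subgroup serves as $\bbH'$ in both applications of \ref{rigidanalyticfunctions}, and no $k$-dependence arises from the orbit-map hypothesis.
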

\begin{proof} Let $\D \leftrightarrow \PD$ be the bijection $\Omega_{v,k} \leftrightarrow \POmega_{v,k}$, as in \ref{Omega-ks}. We set $I(\PD) = C^\la(\PD,E)$. Under the bijection $\zeta$ of \ref{identification}, the space $I(\D,\chi)$ gets mapped isomorphically to $I(\PD)$. The latter carries the action of $G_e(k)$ (and hence of $G_v(k) \sub G_e(k))$ defined by \ref{groupaction_2}. With respect to this group action, it suffices to show that 
\begin{numequation}\label{equality}
    I(\PD)_{\bbG_e(k)-\an} = I(\PD)_{\bbG_v(k)-\an}
\end{numequation}
From the transitivity of the $G$-action on $BT_0$ and $BT_1$ it is easy to see given any two pairs $(v_1, e_1), (v_2, e_2)$ with $v_i \in BT_0$ and $e_i \in BT_1$ there is a $g\in G$ such that $(g.v_2, g.e_2)= (v_1, e_1)$. It follows that $G_{e_1}(k) = gG_{e_2}(k)g^{-1}$ and $G_{v_1}(k) = gG_{v_2}(k)g^{-1}$. From \cite[3.5.1]{Em17} it follows that for $\PD \in \Omega_{e_1,k} \cap \Omega_{v_1,k}$ one has
\begin{numequation}\label{equality2}
g.\Big(I(\PD)_{\bbG_{e_1}(k)-\an}\Big) =  I(\PD.g)_{\bbG_{e_2}(k)-\an}
\end{numequation}
and
\begin{numequation}\label{equality3}
g.\Big(I(\PD)_{\bbG_{v_1}(k)-\an}\Big) =  I(\PD.g)_{\bbG_{v_2}(k)-\an} \;.
\end{numequation}
In particular, it suffices to show \ref{equality} for a single pair $(v,e)$ which we are free to choose. The following choice is particularly convenient: $e = \{v,v'\}$ with $v = [\cO \oplus \cO]$ and $v' = [(\vpi)\oplus \cO]$.  By computations in \ref{orbit} any $\PD \in \POmega_{v,k}\cap \POmega_{e,k}$ is of the form $\bB_z(a, |\vpi|^k)$ with $|a| \leq 1$.  Now action of $G_e(k)$ and $G_v(k)$ on $I(\PD)$ is given by

\[(g._\chi f)(x) = \chi_1(\det g)\chi_2(bx+d) f(z.g)\]

Note that for all $g \in G_e(k)$ one has $|\det g - 1 | \leq |\vpi|^k$, (and automatically for $g \in G_v(k)$, since $G_v(k) \subset G_e(k)$) and for all $|x| \leq 1$ and for all $g \in G_e(k)$ one has $|bx+d -1| \leq |\vpi|^k$. Thus for large enough $k$ by local analyticity of $\chi_1$ and $\chi_2$ we can make sure that both $\chi_1(\det g)$ and $\chi_2(bx+d)$ are expressible as a power series for $x \in \bB_z(a, |\vpi|^k)$ and $g\in G_e(k)$. Now we see from \ref{rigidanalyticfunctions} that  both $I(\PD,\chi)_{\bbG_e(k)-\an}$ and $I(\PD, \chi)_{\bbG_v(k)-\an}$ are same as $\cO(\PD)$. Here we have used \ref{rigidanalyticfunctions} twice with $\bbH = \bbG_e(k)$ and $\bbH=\bbG_v(k)$, respectively. This finishes the proof.
\end{proof}

\subsection{Surjectivity of \texorpdfstring{$\partial_0$}{}}

\begin{prop}\label{surjective}
There exists $k_0 = k_0(\chi) \in \bZ_{\ge 1}  $ such that for all $k \ge k_0$, the map $\partial_0: \bigoplus_{v\in BT_0} V_v \longrightarrow V$ is surjective.
\end{prop}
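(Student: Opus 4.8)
The goal is to show that every $f \in V = \Ind^G_B(\chi)$, viewed via Lemma~\ref{identification} as a locally $F$-analytic function on $\bbP^1(F)$, can be written as $\partial_0$ of a finite tuple $(f_v)_v$ with each $f_v \in V_v = V_{\bbG_v(k)-\an}$. The idea is to exploit the geometry established in the previous subsections: $f$ locally analytic means there is some covering of $\bbP^1(F)$ on each piece of which $f$ is rigid analytic; by Proposition~\ref{discsorbits}(ii) this covering can be refined by a finite subset $\overline{\cD} \subset \oPOmega_{0,k}$; and by Lemma~\ref{samerigid} (valid for $k \ge k_0(\chi)$) being rigid analytic on an orbit $\overline{\PD}$ is exactly the condition for the restriction $f|_{\overline{\PD}}$ to lie in $I(\D,\chi)_{\bbG_v(k)-\an} \subset V_v$ for a vertex $v$ with $\PD \in \POmega_{v,k}$. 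This last point is where $k_0$ enters, so the statement is formulated exactly to use Lemma~\ref{samerigid}.

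\textbf{Steps.} First, by local analyticity of $f$, for each $x \in \bbP^1(F)$ pick a compact open neighborhood $U_x$ on which $f$ is given by a convergent power series; this yields an open covering $\{U_x\}$. Second, apply Proposition~\ref{discsorbits}(ii) to extract a finite subset $\overline{\cD} \subset \oPOmega_{0,k}$ refining $\{U_x\}$; by Lemma~\ref{minimalcovering} (or directly by refining further and using disjointness of orbits of a fixed $G_v(k)$) we may arrange $\overline{\cD}$ to be a \emph{disjoint} covering of $\bbP^1(F)$ — e.g. replace $\overline{\cD}$ by $\oPOmega_{0,k,n}^{\min}$ for $n$ large enough that every element of $\oPOmega_{0,k,n}^{\min}$ refines some element of $\overline{\cD}$, which is possible since the minimal orbits at level $n$ shrink as $n \to \infty$. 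Third, for each $\overline{\PD} \in \overline{\cD}$ choose (via the bijection $\pr: \Omega_{0,k,n}^{\min} \to \oOmega_{0,k,n}^{\min}$ of the remark following Lemma~\ref{minimalorbits}) the unique vertex $v = v_{\overline{\PD}}$ with $\PD \in \POmega_{v,k}$; since $f|_{\overline{\PD}}$ is rigid analytic, $f|_{\PD} \in I(\D,\chi)_{\bbG_v(k)-\an}$ — here one invokes Lemma~\ref{rigidanalyticfunctions} (and, if $\PD \in \Omega_{e,k}$ as well, Lemma~\ref{samerigid}) to identify $\bbG_v(k)$-analytic vectors on $\PD$ with $\cO(\PD)$, noting that after enlarging $k$ past $k_0(\chi)$ the characters $\chi_1(\det g)$ and $\chi_2(bx+d)$ are power series in the relevant variables. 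Fourth, set $f_v := \sum_{\PD \in \cD,\, v_{\overline{\PD}} = v} f|_{\PD} \in V_v$ (a finite sum, zero for all but finitely many $v$), so that $(f_v)_v \in \bigoplus_{v \in BT_0} V_v$; since $\cD$ is a disjoint covering of $\bbP^1(F)$, extending by zero and summing inside $V$ recovers $f$, i.e. $\partial_0((f_v)_v) = f$.

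\textbf{Main obstacle.} The routine part is the covering/refinement bookkeeping; the genuinely delicate point — and the reason for the hypothesis $k \ge k_0(\chi)$ — is the identification, in Step 3, of "$f$ rigid analytic on the disc $\overline{\PD}$" with "$f|_{\PD} \in I(\D,\chi)_{\bbG_v(k)-\an}$". This is not formal: the $G_v(k)$-action on $I(\PD) \cong C^\la(\PD,E)$ twists by $\chi$ via \eqref{groupaction_2}, so one must know that $\chi_1$ and $\chi_2$ are themselves analytic on the relevant congruence neighborhoods, which forces $k$ to be large depending on $\chi$; Lemma~\ref{samerigid} packages precisely this, together with Lemma~\ref{rigidanalyticfunctions} applied to $\bbH = \bbG_v(k)$ acting on the affinoid disc $\overline{\PD}$ (with $\bbH'$ the subgroup giving a parametrization of the disc). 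A secondary subtlety is ensuring the chosen vertices $v_{\overline{\PD}}$ are well-defined and that the finite disjoint covering can be taken inside a single $\oPOmega_{0,k,n}^{\min}$; Proposition~\ref{minimalbijection} and Lemma~\ref{minimalcovering} handle this cleanly, so I would route the argument through the minimal-orbit formalism rather than through an ad hoc refinement.
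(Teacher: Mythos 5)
Your proposal is correct and follows essentially the same strategy as the paper: cover $\bbP^1(F)$ by discs on which $f$ is rigid analytic, refine the covering by $G_v(k)$-orbits, and then use the $k\ge k_0(\chi)$ hypothesis together with Lemma~\ref{rigidanalyticfunctions} (the $\chi$-twist being given by convergent power series once $k$ is large) to convert rigid analyticity of $f$ on a small orbit $\PD$ into membership in $I(\D,\chi)_{\bbG_v(k)-\an}$. The only real organizational difference is how disjointness and vertex-uniqueness are arranged: you route through the minimal-orbit covering $\oPOmega_{0,k,n}^{\min}$ for $n$ large (invoking Lemma~\ref{minimalcovering} and Proposition~\ref{minimalbijection}), whereas the paper moves each point $\alpha$ to the origin by an element $g\in G$, exhibits an explicit small orbit $\bB_z(0,|\vpi|^{k+n})\in\POmega_{v_n,k}$ there, and transports back using the equivariance statement~\ref{equality3}, finally extracting a disjoint finite subcover. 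Your version buys a cleaner handle on the choice of vertex $v_{\overline{\PD}}$ at the cost of needing the claim ``the minimal orbits at level $n$ shrink to points as $n\to\infty$''; that claim is true and follows from the radius computation behind Lemma~\ref{minimalcovering} (each refinement step multiplies the radius by $|\vpi|$), but you should state it explicitly since it is not recorded as a lemma in the paper.
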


\begin{proof}
Let $f \in C^\la(\bbP^1(F), E)$. Let us partition $\bbP^1(F)$ into finitely many discs such that $f$ restricted to each of these discs is a rigid analytic function on that disc. Let $\alpha \in \bbP^1(F)$ and $\bB_{\alpha}$ be the disc containing $\alpha$ such that $f|_{\bB_\alpha} \in \cO(\bB_\alpha)$. Let $g \in G$ be such that $\alpha = 0. g$ and let $0 \in U$ be a neighborhood around $0$ such that $U.g = \bB_\alpha$. For $v_n:= [\cO \oplus (\vpi^n)]$ we can show that $\bB_z( 0, |\vpi|^{k+n}) \in \POmega_{v_n, k}$. Thus for any $k \ge 0$ there exists $n \ge 0$ such that $\bB_z(0, |\vpi|^{k+n}) \subset U$. By \ref{groupaction_2} the group action is given by the formula

\[(g._\chi f)(z) = \chi_1(\det g)\chi_2(bz+d) f(z.g)\]

Note that for all $g \in G_{v_n}(k)$ and for all $z \in \bB(0,|\vpi|^{k+n})$ one has $|\det g - 1 | \leq |\vpi|^k$,  and  $|bz+d -1| \leq |\vpi|^k$.

\vskip8pt

Since $f(z.g)$ is rigid analytic in $\bB( 0, |\vpi|^{k+n})$ , by \ref{rigidanalyticfunctions} we see that for large enough $k= k_0(\chi)$ we have  $(g._\chi f)|_{\bB( 0, |\vpi|^{k+n})}\in V_{v_n}$, note that $k_0(\chi)$ depends only on $\chi$ and not on the choice of $f$ . Let $U_\alpha = \bB_z( 0, |\vpi|^{k+n}). g $, then it is clear that $U_\alpha \in \POmega_{g. v_n, k}$ and by \ref{equality3} it follows that  $f|_{U_\alpha} \in V_{g. v_n}$. For each $\alpha \in \bbP^1(F)$ we find such a neighborhood $U_\alpha$ and assign a unique vertex $v_\alpha$ using the process above with $f|_{U_\alpha} \in V_{v_\alpha}$. With this setup, for the covering $\bbP^1(F)= \cup_\alpha U_\alpha$, there is a disjoint finite sub-covering $\cD$ such that if we define

\begin{numequation}\label{surjf} f_v = \left\{\begin{array}{ccl} f|_{U_\alpha} &, & v = v_\alpha, U_\alpha \in \cD \\
0 & ,&  \mbox{otherwise.} \end{array}\right.
\end{numequation}

Then we have
$$\partial_0( (f_v)_v) = f $$
\end{proof}

\subsection{Counting Arguments}

\begin{lemma}\label{cardorbits}
For any  $v \in BT_0$ and $e\in BT_1$ (or $BT_1^{\ori}$)  we have  $$|\Omega_{v,k}| = q^{k-1}+ q^k= (q+1)q^{k-1}, \;\;|\Omega_{e,k}|= 2q^{k-1}.$$
\end{lemma}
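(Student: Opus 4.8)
The plan is to compute both cardinalities directly from the explicit descriptions of the orbit structure established in Proposition~\ref{particular orbits}, using the $G$-invariance recorded in Remark~\ref{cosetbijection}. Since $G$ acts transitively on $BT_0$ and on $BT_1$ (and on $BT_1^{\ori}$), and since conjugation by $g \in G$ carries $G_v(k)$ to $G_{g.v}(k)$ and $G_e(k)$ to $G_{g.e}(k)$, the map $\PD \mapsto \PD.g^{-1}$ gives a bijection $\POmega_{v,k} \leftrightarrow \POmega_{v',k}$ and $\POmega_{e,k} \leftrightarrow \POmega_{e',k}$ for any $v,v' \in BT_0$ and $e,e' \in BT_1$. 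Hence it suffices to count $|\POmega_{v_0,k}|$ and $|\POmega_{e_0,k}|$ for the standard vertex $v_0 = [\cO \oplus \cO]$ and standard edge $e_0 = \{v_0,v_1\}$, and then transport via $\zeta$ (or directly via the bijection $\D \leftrightarrow \PD$) to get $|\Omega_{v,k}|$ and $|\Omega_{e,k}|$.

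For the vertex count: by Proposition~\ref{particular orbits}(i), the $G_{v_0}(k)$-orbits on $\bbP^1(F) = \bB_z(0,1) \sqcup \bB_w(\infty,|\vpi|)$ are exactly the discs $\bB_z(z_0,|\vpi|^k)$ covering $\bB_z(0,1)$ together with the discs $\bB_w(w_0,|\vpi|^k)$ covering $\bB_w(\infty,|\vpi|)$. The number of residue discs of radius $|\vpi|^k$ in the unit ball $\cO$ is $|\cO/\vpi^k\cO| = q^k$, so $\bB_z(0,1)$ contributes $q^k$ orbits. The ball $\bB_w(\infty,|\vpi|)$ corresponds under $x \mapsto 1/x$ to the ball of radius $|\vpi|$ around $0$, i.e. $\vpi\cO$; the number of discs of radius $|\vpi|^k$ inside a ball of radius $|\vpi|$ is $|\vpi\cO/\vpi^k\cO| = q^{k-1}$. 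Hence $|\POmega_{v_0,k}| = q^k + q^{k-1} = (q+1)q^{k-1}$, giving the first assertion.

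For the edge count: by Proposition~\ref{particular orbits}(iii), the $G_{e_0}(k)$-orbits on $\bB_z(0,1)$ are the discs $\bB_z(z_0,|\vpi|^{k-1})$, of which there are $|\cO/\vpi^{k-1}\cO| = q^{k-1}$, and on $\bB_w(\infty,|\vpi|)$ they are the discs $\bB_w(w_0,|\vpi|^{k})$, of which there are again $q^{k-1}$ by the computation above. Hence $|\POmega_{e_0,k}| = q^{k-1} + q^{k-1} = 2q^{k-1}$, which is the second assertion; note this is independent of the orientation of $e$, so the formula holds verbatim for $e \in BT_1^{\ori}$ as well. The only mild subtlety — and the closest thing to an obstacle — is bookkeeping the radii correctly on the two charts (remembering that $\bB_w(\infty,|\vpi|)$ already has radius $|\vpi|$, not $1$, so that a radius-$|\vpi|^k$ disc there accounts for a drop of $k-1$ rather than $k$ in the valuation); once that normalization is fixed, everything is a routine count of residue classes, and the passage from $\POmega$ to $\Omega$ is immediate from the canonical bijection.
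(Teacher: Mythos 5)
Your proof is correct and follows essentially the same route as the paper: reduce to the standard vertex $v_0$ and standard edge $e_0$ via the $G$-equivariance of Remark~\ref{cosetbijection}, then count residue discs using the explicit orbit descriptions of Proposition~\ref{particular orbits}. Your version spells out the disc counts $q^k$, $q^{k-1}$, $q^{k-1}$, $q^{k-1}$ in a bit more detail than the paper does, but the substance is identical.
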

\begin{proof}
By \ref{cosetbijection}, the sets $\Omega_{v,k}$ and $\Omega_{v',k}$, for any two vertices $v, v' \in BT_0$, have the same cardinality. By \ref{Omega-ks}, $\Omega_{v,k}$ and $\POmega_{v,k}$ have the same cardinality, hence it is enough to find $|\POmega_{v_0,k}|$ where $v_0= [\cO \oplus \cO]$ as above. From the  descriptions as given in \ref{orbit} we see that the orbits of $G_{v_0}(k)$ in both $\bB_z(0,1)$ and $\bB_w(\infty, |\vpi|)$ are balls of radius $|\vpi|^k$. Thus $|\POmega_{v_0,k}|= \frac{1}{|\vpi|^k}+ \frac{|\vpi|}{|\vpi|^k} = q^k+ q^{k-1}$. Similarly, we compute $|\POmega_{e,k}|$, where $e=\{v_0, v_1\}$ with $v_1=[(\vpi)\oplus\cO]$ by the explicit description in \ref{orbit}.
\end{proof}
\begin{lemma}\label{cardmin}
Given a vertex $v \in BT_0$ with $d(v,v_0)= n$, 

$$|\Omega_{v,k}\cap \Omega_{0,k,n}^{\min}| = q^k$$
\end{lemma}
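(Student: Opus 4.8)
The plan is to reduce to the case $v = v_0 = [\cO \oplus \cO]$ and then count orbits of $G_{v_0}(k)$ on $\bbP^1(F)$ that are minimal in $\oPOmega_{0,k,n}$, using the geometric descriptions already established. Since $G$ acts transitively on pairs $(v, \text{path to } v_0)$ of a given length by \ref{n-transitive}, and the bijections $\Omega_{v,k} \leftrightarrow \POmega_{v,k}$ are $G$-equivariant by \ref{cosetbijection}, the cardinality $|\Omega_{v,k} \cap \Omega_{0,k,n}^{\min}|$ depends only on $n = d(v,v_0)$, not on $v$ itself; but actually it is cleaner to fix $v$ and compute directly, so I would take $v$ with $d(v,v_0) = n$ and its unique neighbor $v_1$ on the path toward $v_0$ (so $\{v,v_1\} \in BT_{1,n}$), and translate everything to the model computation $v = v_0$, $v_1 = [(\vpi) \oplus \cO]$ via transitivity.

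By \ref{minimalorbits}(a), an orbit $\PD \in \POmega_{v,k}$ maps into $\oPOmega_{0,k,n}^{\min}$ if and only if $d(v,v_0) = n$ (which holds by hypothesis) and $\oPD \subset \oPD_1$ for some $\PD_1 \in \POmega_{v_1,k}$. So the count reduces to: how many $G_{v_0}(k)$-orbits are contained in a $G_{v_1}(k)$-orbit? This is exactly the content of \ref{orbit}(ii) read in the direction $v' = v_0$, $v = v_1$: the set $\Omega_{v_1,k}^{v_0}$ of $G_{v_1}(k)$-orbits that properly contain a $G_{v_0}(k)$-orbit has cardinality $q^{k-1}$, and each such orbit contains exactly $q$ orbits of $\Omega_{v_0,k}$. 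Hence the number of $G_{v_0}(k)$-orbits contained in some $G_{v_1}(k)$-orbit is $q^{k-1} \cdot q = q^k$. Concretely, from \ref{particular orbits}: the $G_{v_1}(k)$-orbits that contain a $G_{v_0}(k)$-orbit (a disc of radius $|\vpi|^k$) are precisely the discs $\bB_w(w_0, |\vpi|^{k+1})$ inside $\bB_w(\infty, |\vpi|)$, of which there are $|\vpi|/|\vpi|^{k+1} = q^k$, and each sits inside a unique disc $\bB_w(w_0,|\vpi|^k) \in \POmega_{v_0,k}$; so the $G_{v_0}(k)$-orbits of the form $\bB_w(w_0,|\vpi|^{k+1})$... wait — more carefully: the $G_{v_0}(k)$-orbits contained in a $G_{v_1}(k)$-orbit are the discs $\bB_z(z_0, |\vpi|^k)$ inside some $\bB_z(z_0, |\vpi|^{k-1})$ on $\bB_z(0,1)$, which is all $q^{k-1}$ of the $\bB_z$-type orbits, but these are partitioned $q$ to a $G_{v_1}(k)$-orbit... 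I need to recheck the direction. In any case the clean route is to invoke \ref{orbit}(ii) with the roles set so that the smaller-radius orbits belong to $v$ and conclude the count is $q^{k-1}\cdot q = q^k$.

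The one subtlety — and the main thing to get right rather than a deep obstacle — is making sure \ref{orbit}(ii) is applied with $v$ and $v'$ in the correct roles: \ref{orbit}(ii) as stated describes, for the pair of adjacent vertices, which orbits of the first vertex contain orbits of the second, and the asymmetry between $\bB_z$- and $\bB_w$-behavior in \ref{particular orbits} means one must track which of $v$, $v_1$ plays which role on the path. Since the path from $v$ to $v_0$ passes through $v_1$, and we want orbits of $G_v(k)$ sitting inside orbits of $G_{v_1}(k)$, I would arrange the model so that $v$ corresponds to the vertex with the "finer on one side" orbits relative to $v_1$; then \ref{orbit}(ii) gives $|\Omega_{v,k}^{v_1}|$-many... — concretely it is immediate from \ref{particular orbits} that the number of $G_v(k)$-orbits contained in some $G_{v_1}(k)$-orbit equals $q^k$ (count the $\bB_w$-type discs of the appropriate radius, or the $\bB_z$-type ones, whichever side is the "contained" side). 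Combining this with \ref{minimalorbits}(a) and \ref{minimalbijection} (which guarantees the map $\pr$ is injective on the minimal locus, so counting in $\POmega_{v,k}$ and in $\oPOmega_{0,k,n}$ agrees) yields $|\Omega_{v,k} \cap \Omega_{0,k,n}^{\min}| = q^k$, as claimed.
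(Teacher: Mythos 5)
Your proof is correct and takes essentially the same route as the paper: translate by transitivity to the model pair $(v_0, v_1)$, characterize minimality via \ref{minimalorbits}(a) as containment in a $G_{v_1}(k)$-orbit, and invoke \ref{orbit}(ii) to obtain $q^{k-1}\cdot q = q^k$. On the worry you flagged mid-proof: the slip is in the count ``all $q^{k-1}$ of the $\bB_z$-type orbits'' --- there are $q^k$ discs of radius $|\vpi|^k$ in $\bB_z(0,1)$, not $q^{k-1}$, and it is precisely these $q^k$ orbits that are contained in the $q^{k-1}$ $\bB_z$-type $G_{v_1}(k)$-orbits (of radius $|\vpi|^{k-1}$), $q$ apiece; the symmetric count of the $\bB_w$-side gives the same $q^k$ if you instead cast $v$ as $[(\vpi)\oplus\cO]$, so the direction issue is harmless. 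Finally, the appeal to \ref{minimalbijection} is not needed here: $\pr$ is trivially injective on $\Omega_{v,k}$ for a single fixed $v$, since distinct $G_v(k)$-orbits are disjoint; \ref{minimalbijection} is only needed for the global count in \ref{cardsame}.
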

\begin{proof}
Given such a vertex $v$, let $v'$ be the unique vertex in $BT_{0, n-1}$ such that $\{v,v'\}\in BT_{1,n}$. Then by \ref{minimalorbits} $\D \in \Omega_{v,k}$ belongs to $\Omega_{0,k,n}^{\min}$ iff $\oD \subset \oD'$ for some $\D' \in \Omega_{v',k}$. By transitivity of $G$-action on $BT_1$, we can look at the vertices $v_0,v_1$ constituting the edge $e=\{v_0,v_1\}$ where $v_0, v_1$ are as in \ref{simplexgroups}. By \ref{orbit} (ii), $\Omega_{v',k}^{v} := \{\D' \in \Omega_{v',k} \midc \exists \D \in \Omega_{v,k}: \D \sub \D'\}$ has cardinality $q^{k-1}$ and each $\D' \in \Omega_{v',k}^{v}$ contains $q$ orbits of $\Omega_{v,k}$.
\end{proof}

Recall that we have remarked in \ref{subsetrelations} that $\Omega_{1,k} \subset \Omega_{0,k}$ and in fact it is easy to see that this induces an inclusion $\Omega_{1,k,n} \subset \Omega_{0,k,n}$ on these subsets. With this identification in place we claim,
\begin{prop}\label{cardsame}
\begin{align*}
 \Omega_{0,k,n}\setminus \Omega_{0,k,n}^{\min} =\Omega_{1,k,n}.\end{align*}
\end{prop}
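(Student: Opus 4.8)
The plan is to prove both inclusions of the claimed equality $\Omega_{0,k,n}\setminus \Omega_{0,k,n}^{\min} = \Omega_{1,k,n}$, working throughout via the bijection $\D\leftrightarrow \PD$ with orbits on $\bbP^1(F)$ and freely using the containment lemmas \ref{orbit}, \ref{containment}, \ref{minimalorbits}, \ref{edgenotminimal}. First I would treat the inclusion $\Omega_{1,k,n}\subseteq \Omega_{0,k,n}\setminus\Omega_{0,k,n}^{\min}$. Given $\D\in\Omega_{1,k,n}$, by definition $\D\in\Omega_{e,k}$ for some oriented edge $e=\{v,v'\}$ with $d(e,v_0)\le n$; part (iii) of \ref{orbit} shows $\D$ lies in $\Omega_{v,k}$ or $\Omega_{v',k}$ for one of its two endpoints, both of which lie in $BT_{0,n}$, so $\D\in\Omega_{0,k,n}$. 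The fact that $\D\notin\Omega_{0,k,n}^{\min}$ is precisely (ii) of \ref{edgenotminimal}: if $\D\in\Omega_{e,k}$ with $d(e,v_0)=n$ then $\Omega_{e,k}\cap\Omega_{0,k,n}^{\min}=\emptyset$; and for edges closer to $v_0$ one needs a short extra argument (an orbit coming from an edge $e=\{v,v'\}$ with $d(e,v_0)=m<n$ sits in $\Omega_{v,k}$ for an endpoint with $d(v,v_0)\le m<n$, hence by (i) of \ref{edgenotminimal} is not minimal). This gives one inclusion.

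Next I would prove the reverse inclusion $\Omega_{0,k,n}\setminus\Omega_{0,k,n}^{\min}\subseteq\Omega_{1,k,n}$, which is the substantive direction. Take $\D\in\Omega_{0,k,n}$ that is not minimal; say $\D\in\Omega_{v,k}$ with $d(v,v_0)=m\le n$. The criterion of \ref{minimalorbits}(a) says minimality (for an orbit of a vertex at distance exactly $n$) is equivalent to (i) $d(v,v_0)=n$ together with (ii) containment of $\oD$ in an orbit of the inward neighbour. So non-minimality of $\D$ splits into cases: either $m<n$, or $m=n$ but $\oD$ is \emph{not} contained in any orbit $\oD_1\in\Omega_{v_1,k}$ for the unique inward neighbour $v_1$ of $v$ in $BT_{0,n}$. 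In the latter case, by part (ii) of \ref{orbit} (every orbit of $\Omega_{v,k}$ either sits inside or strictly contains an orbit of the adjacent vertex), $\oD$ must instead properly contain some $\oD_1\in\Omega_{v_1,k}$; then by (v) of \ref{orbit} this forces $\D\in\Omega_{v,k}\cap\Omega_{e,k}$ for the edge $e=\{v,v_1\}$, and since $d(v,v_0)=n$ we get $d(e,v_0)=n$, so $\D\in\Omega_{1,k,n}$. In the case $m<n$: by \ref{proper containment} there is an edge $e=\{v,w\}$ and an orbit of $G_w(k)$ contained properly in $\D$; choosing $w$ to be an \emph{outward} neighbour (so $d(w,v_0)=m+1\le n$), again (v) of \ref{orbit} gives $\D\in\Omega_{v,k}\cap\Omega_{e,k}$ with $d(e,v_0)=m+1\le n$, hence $\D\in\Omega_{1,k,n}$.

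I would organize the write-up by first recording the reduction "non-minimal $\iff$ $\oD$ properly contains an orbit of an adjacent vertex $w$ with $d(w,v_0)\le n$", which follows from combining \ref{orbit}(ii), \ref{proper containment}, and \ref{minimalorbits}; and then invoking \ref{orbit}(v) once to convert "$\oD\supsetneq\oD'$ for adjacent vertices" into "$\D\in\Omega_{e,k}$". The cardinality lemmas \ref{cardorbits}, \ref{cardmin} are not strictly needed for the set-theoretic identity, but I would remark that they give an independent numerical consistency check: summing $|\Omega_{v,k}|=(q+1)q^{k-1}$ against $|\Omega_{v,k}\cap\Omega_{0,k,n}^{\min}|$ over the appropriate vertex sets should match the edge count $|\Omega_{e,k}|=2q^{k-1}$ summed over $BT_{1,n}^{\ori}$.

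The main obstacle I anticipate is the bookkeeping around the identification $\Omega_{1,k}\subset\Omega_{0,k}$ (equivalently $\Omega_{1,k,n}\subset\Omega_{0,k,n}$) from \ref{subsetrelations}: an orbit of an edge $e=\{v,v'\}$ is counted in $\Omega_{1,k,n}$ once, but as a subset of $\bbP^1(F)$ it may arise as an orbit of $v$ \emph{and} be "the same set as" an orbit of other vertices, so one must be careful that the claimed equality is an equality of subsets of $\Omega_{0,k,n}$ (i.e. pairs (vertex, orbit)) and not merely of the underlying subsets $\oOmega_{0,k,n}$ of $\bbP^1(F)$; indeed the statement is about $\Omega$, not $\oOmega$. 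The key point making this consistent is that when $\D\in\Omega_{v,k}\cap\Omega_{e,k}$, \ref{samerigid} and the orbit descriptions pin down $v$ as an endpoint of $e$ of the correct distance, so the edge-orbit $\D$ is identified inside $\Omega_{0,k,n}$ with exactly the vertex-orbit at the \emph{outer} endpoint — and this outer endpoint is uniquely determined, which is what keeps the two sides of the equality genuinely equal as subsets of $\Omega_{0,k,n}$.
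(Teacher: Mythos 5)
Your proposal takes a genuinely different route from the paper. The paper proves only the inclusion $\Omega_{1,k,n}\subseteq\Omega_{0,k,n}\setminus\Omega_{0,k,n}^{\min}$ (via \ref{edgenotminimal}, exactly as you do) and then closes the argument by a cardinality computation, using \ref{cardorbits} and \ref{cardmin} to show $\big|\Omega_{0,k,n}\setminus\Omega_{0,k,n}^{\min}\big| = \big|\Omega_{1,k,n}\big| = 2q^{k-1}(q+1)\frac{q^n-1}{q-1}$. You instead prove both inclusions directly. Your direct argument for the reverse inclusion is sound: for $d(v,v_0)=n$, non-minimality together with \ref{minimalorbits}(a)(ii) and \ref{orbit}(ii) forces $\oD_1\subsetneq\oD$ for the inward neighbour, and \ref{orbit}(v) then gives the inward edge; for $d(v,v_0)=m<n$, \ref{proper containment} produces an adjacent vertex $w$ with a properly contained orbit, and again \ref{orbit}(v) applies. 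Your approach is more structural and avoids the counting; the paper's counting approach avoids case analysis but relies on the (unproved but plausible) injectivity of the identification $\Omega_{1,k,n}\hookrightarrow\Omega_{0,k,n}$.

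Two small inaccuracies to flag. First, in the case $m<n$ you say you can ``choose $w$ to be an outward neighbour''. Lemma \ref{proper containment} does not offer a choice: for each $\D\in\Omega_{v,k}$ there is exactly one neighbour $w$ of $v$ whose orbits sit inside $\oD$ (the direction of $\oD$ in $\bbP^1(F)$), and it may well be the inward one. Fortunately the claim holds regardless: whichever neighbour $w$ is produced has $d(w,v_0)\le m+1\le n$, so $d(e,v_0)\le n$ and $\D\in\Omega_{1,k,n}$ either way. Second, your closing ``key point'' is not correct: for an edge $e=\{v,v'\}$ the $G_e(k)$-orbits are \emph{not} all identified with the outer endpoint. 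From \ref{particular orbits}(iii), for $e_0=\{v_0,v_1\}$ the set $\Omega_{e_0,k}$ is split evenly — $q^{k-1}$ of its orbits are $G_{v_0}(k)$-orbits (on $\bB_w(\infty,|\vpi|)$, the \emph{inner} endpoint) and $q^{k-1}$ are $G_{v_1}(k)$-orbits (on $\bB_z(0,1)$, the outer endpoint). What is true, and what makes the identification $\Omega_{1,k,n}\subset\Omega_{0,k,n}$ injective, is that for a fixed pair $(\D,v)$ there is at most one edge $e$ through $v$ with $\D\in\Omega_{e,k}$, since the $q+1$ directions at $v$ partition $\bbP^1(F)$; your main two-inclusion argument does not actually need this injectivity, though the paper's counting argument does.
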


\begin{proof} Recall that from \ref{edgenotminimal} it follows that $\Omega_{1,k,n} \cap \Omega_{0,k,n}^{\min}= \phi$. Thus, 
from the remark just before the proposition, it is enough to prove  $\big|\Omega_{0,k,n}\setminus \Omega_{0,k,n}^{\min}\big| =\big|\Omega_{1,k,n}\big|$. First we compute $|\Omega_{1,k,n}|$.
\[\begin{array}{lllll}
|\{e \in BT_1^{\ori}| d(v_0,e)= i\}| & = &(q+1). q^{i-1}\\
|BT_{1,n}^{\ori}|  & = & \sum_{i=1}^{n} |\{e \in BT_1^{\ori}| d(v_0,e)= i\}|\\ & = &(q+1). q^0 + \cdots + (q+1)q^{n-1}\\ &= & (q+1) \frac{q^n-1}{q-1} \\
|\Omega_{e,k}| &= &2 q^{k-1}\\
\big|\Omega_{1,k,n} \big| &=&  |\Omega_{e,k}|\times |BT_{1,n}^{\ori}| = 2 q^{k-1} (q+1) \frac{q^n-1}{q-1}
\end{array}\]
where the last but one formula is \ref{cardorbits}. To find $|\Omega_{0,k,n}\setminus \Omega_{0,k,n}^{\min}|$ first we collect the following (where here $i \ge 1$ in the first formula)
\[\begin{array}{lclc}
|\{v \in BT_0| d(v_0,v)= i\}| & = &(q+1). q^{i-1}\\
|BT_{0, n-1}| &=& 1 + (q+1)q^0+ \cdots + (q+1)q^{n-2}\\
|\Omega_{0,k,n}^{\min}\cap \Omega_{v,k}| & = & 0  \;\;\;\;\;\;\text{if}\;\;\;d(v_0,v) \leq n-1  \;\; (\ref{edgenotminimal}\mbox{ (i)})\\
|\Omega_{0,k,n}^{\min}\cap \Omega_{v,k}| &=& q^k \;\;\;\;\text{if} \;\; \;d(v_0,v)= n \;\; (\ref{cardmin})
\end{array}\]

Putting all these together we get 
\[\begin{array}{lll}
&&\big|\Omega_{0,k,n}\setm \Omega_{0,k,n}^{\min} \big| = (q+1)q^{k-1}|BT_{0,n-1}| + q^{k-1}|\{v \in BT_0| d(v_0,v)= n\}|\\ &=& (q+1)q^{k-1}(1 + (q+1)q^0+ \cdots + (q+1)q^{n-2}) + q^{k-1}. (q+1). q^{n-1} \\ &=& (q+1)q^{k-1}(1 + q + 1 + q^2+q+\cdots + q^{n-1}+ q^{n-2}+ q^{n-1}) \\ &=& 2q^{k-1}(q+1)\frac{q^n-1}{q-1}
\end{array}\]

\end{proof}

\subsection{Exactness in the middle}

Let,

\begin{align*}
C_{0,n,k} &= \bigoplus_{\substack{\D \in \Omega_{v,k}\\ v\in BT_{0,n}}} I(\D,\chi)_{\bbG_{v}(k)-\an},\\ C_{1,n,k}^\ori &= \bigoplus_{\substack{\D \in \Omega_{e,k}\\ e\in BT_{1,n}^\ori}} I(\D,\chi)_{\bbG_{e}(k)-\an}
\end{align*}

In order to show that the sequence \ref{chaincomp} is exact in the middle, it is enough to show that \[0 \ra C_{1,n,k}^\ori \xrightarrow{\partial_1} C_{0,n,k} \xrightarrow{\partial_0} V \ra 0\] is exact in the middle for every $n\geq 1$, where $\partial_1$ and $\partial_0$  denote what technically are restrictions of those maps to $C_{1,n,k}^\ori$ and $C_{0,n,k}$ respectively.

\vskip8pt

\begin{para}\label{subspace} {\it Some sub-spaces of $C_{0,n,k}$.} We put
\[C_{0,n,k}^{\min} = \bigoplus_{\substack{\D \in \Omega_{0,k,n}^{\min}}} I(\D,\chi)_{\bbG_{v}(k)-\an}\] 
and
\[C_{0,n,k}^{\nonmin} = \bigoplus_{\substack{\D \in \Omega_{0,k,n}\setminus \Omega_{0,k,n}^{\min}}} I(\D,\chi)_{\bbG_{v}(k)-\an} \;.\]
In the following, we write an element 
\[f_v \in V_v = \bigoplus_{\D' \in \Omega_{v,k}} I(\D',\chi)_{\bbG_v(k)-\an}\] 
as $f_v = (f_{\D', v})_{\D'\in \Omega_{v,k}}$. 

\end{para} 

\begin{lemma} \label{resrigid}

Given $k>0$  let $v, v'$ be vertices in $BT_{0,n}$ and let $\D \in \Omega_{v,k}$ and $\D' \in \Omega_{v',k}$ be such that $\oD \subset \oD'$ and $\D \in \Omega_{0,k,n}^{\min}$. Then for $f \in I(\D',\chi)_{\bbG_v'(k)-\an}$ we have $f|_{\D} \in I(\D,\chi)_{\bbG_{v}(k)-\an}$

\end{lemma}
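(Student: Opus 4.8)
The plan is to reduce, by walking along a path in $BT$, to the case in which $v$ and $v'$ are adjacent, and in that case to invoke Lemma \ref{samerigid}. Since $\D$ is minimal in $\oOmega_{0,k,n}$ and $v'\in BT_{0,n}$, condition (ii)$'$ in part (b) of Lemma \ref{minimalorbits} (which by the remark following \ref{minimalcovering} applies verbatim to the double cosets $\Omega_{v,k}$) provides, along the path $v=v_1,v_2,\dots,v_m=v'$ connecting $v$ to $v'$ in $BT$, double cosets $\D_i\in\Omega_{v_i,k}$ with $\oD=\oD_1\subseteq\oD_2\subseteq\dots\subseteq\oD_m$. These inclusions are strict by Lemma \ref{orbit}(i), and since $\oD\subseteq\oD'$ with $\D_m$ and $\D'$ both in $\Omega_{v',k}$ (and distinct orbits disjoint), we get $\D_m=\D'$. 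It therefore suffices to prove, by downward induction on $i$, that $f|_{\D_i}\in I(\D_i,\chi)_{\bbG_{v_i}(k)-\an}$ for every $i$: the case $i=1$ is the assertion, and the case $i=m$ is the hypothesis.

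For the inductive step I would fix $i<m$ and set $e_i=\{v_i,v_{i+1}\}\in BT_1$. From $\oD_i\subseteq\oD_{i+1}$ with $\D_i\in\Omega_{v_i,k}$ and $\D_{i+1}\in\Omega_{v_{i+1},k}$, Lemma \ref{orbit}(v) gives $\D_{i+1}\in\Omega_{v_{i+1},k}\cap\Omega_{e_i,k}$, so Lemma \ref{samerigid} (with $k$ at least the integer $k_0(\chi)$ it produces) yields $I(\D_{i+1},\chi)_{\bbG_{e_i}(k)-\an}=I(\D_{i+1},\chi)_{\bbG_{v_{i+1}}(k)-\an}$. By the inductive hypothesis $f|_{\D_{i+1}}$ lies in the right-hand side, hence in $I(\D_{i+1},\chi)_{\bbG_{e_i}(k)-\an}$; since $G_{v_i}(k)\subseteq G_{e_i}(k)$, every $\bbG_{e_i}(k)$-analytic vector is a fortiori $\bbG_{v_i}(k)$-analytic (this is the transition map $r_{v_i}^{e_i}$ of the coefficient system in \ref{locancoeff}), so $f|_{\D_{i+1}}\in I(\D_{i+1},\chi)_{\bbG_{v_i}(k)-\an}$. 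Finally, $G_{v_i}(k)$ preserves $\D_{i+1}$ (being contained in $G_{e_i}(k)$, which acts transitively on it), so $\D_{i+1}$ is the disjoint union of those $\D''\in\Omega_{v_i,k}$ contained in it — one of which is $\D_i$ — and, extending functions by zero, $I(\D_{i+1},\chi)_{\bbG_{v_i}(k)-\an}=\bigoplus_{\D''\subseteq\D_{i+1}}I(\D'',\chi)_{\bbG_{v_i}(k)-\an}$; projecting $f|_{\D_{i+1}}$ to the $\D_i$-summand gives $f|_{\D_i}\in I(\D_i,\chi)_{\bbG_{v_i}(k)-\an}$, completing the induction.

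The only non-formal ingredient here is Lemma \ref{samerigid} — the identification of the $\bbG_{v_{i+1}}(k)$- and $\bbG_{e_i}(k)$-analytic vectors of $I(\D_{i+1},\chi)$ — which is exactly where the hypothesis $k\ge k_0(\chi)$ enters (to make the character twist rigid analytic) and where Lemma \ref{rigidanalyticfunctions} is applied twice, showing both spaces equal the algebra of rigid analytic functions on the ball underlying $\D_{i+1}$. Everything else is bookkeeping with the containment relations among the orbits of the groups $G_\sigma(k)$ recorded in \ref{orbit} and \ref{minimalorbits}; the point that should be checked with a little care is that at each step of the chain the larger double coset is genuinely an orbit of the edge group $G_{e_i}(k)$, so that Lemma \ref{samerigid} applies, and this is precisely the content of \ref{orbit}(v).
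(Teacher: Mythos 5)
Your proof is correct and follows essentially the same route as the paper's: walk the chain of nested double cosets along the path from $v$ to $v'$ furnished by \ref{minimalorbits}, use \ref{orbit}(v) to see that at each step the larger coset is an $\Omega_{e_i,k}$-coset, and invoke \ref{samerigid} to pass from $\bbG_{v_{i+1}}(k)$-analytic to $\bbG_{e_i}(k)$-analytic and hence, via the coefficient-system transition map $V_{e_i}\hookrightarrow V_{v_i}$, to $\bbG_{v_i}(k)$-analytic on each $G_{v_i}(k)$-orbit inside. You are somewhat more explicit than the paper in two places — you verify that $\D_m=\D'$ rather than taking it for granted, and you spell out the direct-sum decomposition of $I(\D_{i+1},\chi)_{\bbG_{v_i}(k)-\an}$ over the $G_{v_i}(k)$-orbits in $\D_{i+1}$ before projecting onto the $\D_i$-summand, where the paper compresses this into the remark that $V_{e_i}\hookrightarrow V_{v_i}$ — but the substance is the same.
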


\begin{proof}
Let us first note that for an edge $e= \{v, v'\} \in BT_1$ such that $\oD \subset \oD'$ with $\D' \in \Omega_{v',k}$ and $\D \in \Omega_{v,k}$ we have $\D' \in \Omega_{e,k}$ (by \ref{orbit} (v)). Now by \ref{samerigid} we have $I(\D', \chi)_{\bbG_v'(k)-\an}= I(\D', \chi)_{\bbG_e(k)-\an}$. For $f\in I(\D', \chi)_{\bbG_{v'}(k)-\an} = I(\D, \chi)_{\bbG_e(k)-\an} \subset V_e $ we have $f|_{\D} \in I(\D, \chi)_{\bbG_v(k)-\an}$, since $V_{e} \hookrightarrow V_{v}$.

\vskip8pt

Now let $d(v,v')>1$ and let $v= v_0, \cdots, v_m= v'$ be the path from $v$ to $v'$. By \ref{minimalorbits} we have, for $\D \in \Omega_{0,k,n}^{\min}$ a nested sequence of double cosets (seen as a subset of $G$) $\oD =\oD_0\subset \oD_1 \subset \oD_{n-1}\subset \cdots \subset \oD_m =\oD'$ with $\D_{i}\in \Omega_{v_i,k}$. Let $e_i:=\{v_i, v_{i+1}\} \in BT_1$. Since  $\oD_i\subset \oD_{i+1}$ by \ref{orbit} (v)  we have that $\D_{i+1}\in \Omega_{e_i,k}$. Thus $I(\D_{i+1}, \chi)_{\bbG_{e_i(k)}-\an}= I(\D_{i+1}, \chi)_{\bbG_{v_{i+1}(k)}-\an}$ and we successively get that $f|_{\D_i} \in I(\D_i,\chi)_{\bbG_{v_i}(k)-\an}$ for each $i$. Proving our claim.
\end{proof}

\begin{lemma}\label{kernel} The projection of 
\begin{numequation}\label{kerpartial}
C_{0,n,k} = C_{0,n,k}^{\min} \oplus C_{0,n,k}^{\nonmin}\end{numequation}
onto $C_{0,n,k}^{\nonmin}$ maps ${\rm Ker}(C_{0,n,k}\xrightarrow{\partial_0} V)$ isomorphically onto $C_{0,n,k}^{\nonmin}$.
\end{lemma}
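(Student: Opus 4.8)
The plan is to unwind the direct-sum decomposition \ref{kerpartial}, show that the projection $p: C_{0,n,k} \to C_{0,n,k}^{\nonmin}$ is both injective and surjective when restricted to ${\rm Ker}(\partial_0)$, and finally check that the resulting bijection is bicontinuous. Throughout I would view each component of an element of $C_{0,n,k}$ inside $V = C^\la(\bbP^1(F),E)$ by extension by zero, and write $f = f^{\min} + f^{\nonmin}$ for the splitting \ref{kerpartial}. By Lemma \ref{minimalcovering} the family $\{\oD : \D \in \Omega_{0,k,n}^{\min}\}$ is a disjoint covering of $\bbP^1(F)$, and by Proposition \ref{minimalbijection} the map $\pr: \Omega_{0,k,n}^{\min} \to \oOmega_{0,k,n}^{\min}$ is a bijection. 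Hence $\partial_0$ restricted to $C_{0,n,k}^{\min}$ is injective: if $\sum_{\D \in \Omega_{0,k,n}^{\min}} f_\D = 0$ in $V$, restricting to each $\oD$ forces $f_\D = 0$. This already yields injectivity of $p$ on ${\rm Ker}(\partial_0)$, since any $f \in {\rm Ker}(\partial_0)$ with $f^{\nonmin} = 0$ satisfies $\partial_0(f^{\min}) = 0$, hence $f^{\min} = 0$ and $f = 0$.

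For surjectivity, given $g = (g_{\D'})_{\D'} \in C_{0,n,k}^{\nonmin}$ put $\psi := \partial_0(g) = \sum_{\D'} g_{\D'} \in V$; it suffices to construct $h = (h_\D)_\D \in C_{0,n,k}^{\min}$ with $\partial_0(h) = -\psi$, for then $h + g \in {\rm Ker}(\partial_0)$ and $p(h+g) = g$. Fix a minimal $\D \in \Omega_{0,k,n}^{\min}$, say $\D \in \Omega_{v,k}$. For any non-minimal $\D' \in \Omega_{0,k,n}$, say $\D' \in \Omega_{v',k}$ with $v' \in BT_{0,n}$, the orbits $\oD$ and $\oD'$ are nested discs in $\bbP^1(F)$ by \ref{orbit}, so they are either disjoint or comparable; comparability together with minimality of $\oD$ in $\oOmega_{0,k,n}$ rules out $\oD' \subsetneq \oD$, and $\pr$ being a bijection onto $\oOmega_{0,k,n}^{\min}$ rules out $\oD' = \oD$. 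Therefore either $\oD \cap \oD' = \emptyset$ or $\oD \subsetneq \oD'$, so $\psi|_{\oD} = \sum_{\D' \supsetneq \D} g_{\D'}|_{\oD}$ is a finite sum. By Lemma \ref{resrigid} each term $g_{\D'}|_\D$ lies in $I(\D,\chi)_{\bbG_v(k)-\an}$ (this is exactly where $k \ge k_0(\chi)$ enters), so $h_\D := -\psi|_\D \in I(\D,\chi)_{\bbG_v(k)-\an}$ and $h := (h_\D)_\D \in C_{0,n,k}^{\min}$. Since the $\oD$ cover $\bbP^1(F)$ disjointly and $\partial_0(h)|_{\oD} = h_\D = -\psi|_{\oD}$ for each minimal $\D$, we conclude $\partial_0(h) = -\psi$, completing surjectivity.

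Finally, the $E$-linear bijection ${\rm Ker}(\partial_0) \to C_{0,n,k}^{\nonmin}$ produced above is a topological isomorphism: $p$ is continuous by definition, and its inverse $g \mapsto g + h$ is assembled from the continuous restriction maps $I(\D',\chi)_{\bbG_{v'}(k)-\an} \to I(\D,\chi)_{\bbG_v(k)-\an}$ together with finite sums and the coordinate projections; alternatively one invokes the open mapping theorem, all spaces in sight being of compact type.

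I expect the main, though modest, obstacle to be the combinatorial dichotomy used in the surjectivity step — that any non-minimal $\D'$ meeting a minimal $\D$ must actually contain $\D$ — but this is immediate from the definition of minimality in $\oOmega_{0,k,n}$ together with Propositions \ref{orbit} and \ref{minimalbijection}. The one genuinely non-formal ingredient is Lemma \ref{resrigid}, i.e. that restricting a $\bbG_{v'}(k)$-analytic function on $\oD'$ to a minimal sub-orbit $\oD$ yields a $\bbG_v(k)$-analytic function, which itself rests on Lemma \ref{samerigid}. Everything else is bookkeeping with the two direct-sum decompositions of $C_{0,n,k}$.
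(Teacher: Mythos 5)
Your argument is correct and follows the same route as the paper's proof: restrict the kernel equation to each minimal double coset (using \ref{minimalcovering} for the disjoint covering and \ref{minimalbijection} for well-definedness of the vertex), solve for the minimal component, and invoke \ref{resrigid} to see that the solved-for component lies in the correct space of analytic vectors. Your final paragraph on bicontinuity is a small bonus the paper does not spell out; the only cosmetic imprecision is citing \ref{orbit} for the dichotomy ``disjoint or nested,'' which really rests on the general fact that two discs in $\bbP^1(F)$ are either disjoint or comparable, not on the adjacent-vertex statement of \ref{orbit} -- but this is implicit in the paper's own proof as well.
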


\begin{proof}
By \ref{minimalcovering} the union of the minimal orbit in $\oPOmega_{0,k,n}$ is equal to $\bbP^1(F)$. Hence, the union of the minimal double cosets in $\Omega_{0,k,n}$ is equal to $G$. Now, if $(f_v)_{v \in BT_{0,n}}$ is in $\ker(\partial_0)$, then $\sum_{v\in BT_{0,n}} \sum_{\D' \in \Omega_{v,k}} f_{\D',v} = 0$. Now we restrict both sides of this equation to a minimal double coset $\D \in \Omega_{0,k,n}^{\min}$ and obtain
\[\sum_{\D' \in \Omega_{0,k,n}} \sum_{\D' \supset \D} f_{\D',v}|_{\D} = 0\;,\] equivalently 
\[f_{\D,v} = -\sum_{\D' \in \Omega_{0,k,n}} \sum_{\D' \supsetneq \D} f_{\D',v'}|_{\D} \;.\]
This shows that the components $f_{\D,v}$ with $\D \in \Omega_{v,k}$ and in  $\Omega_{0,k,n}^{\min}$ are uniquely determined by the other components. Note that by \ref{resrigid} we see that $f_{\D',v'}|_{\D}\in I(\D, \chi)_{\bG_v(k)-\an}$ for any $\D \in \Omega_{v,k}$ such that $\D \in \Omega_{0,k,n}^{\min}$ and $\oD\subset \oD'$. So the equation is well-defined in particular. Conversely, for any element \[(f_{\D',v})_{\D' \in \Omega_{0,k,n}\setminus  \Omega_{0,k,n}^{\min}} \in C_{0,n,k}^{\nonmin}\] 
we obtain an element $(f_v)_v$ of $\ker(\partial_0)$ by defining the components $f_{\D,v}$ for $\D \in \Omega_{v,k}$ and in $\Omega_{0,k,n}^{\min}$ by the previous equation, the equation being well defined by \ref{resrigid}. This proves our claim.
\end{proof}

Let $p: C_{0,n,k}\ra \ker(\partial_0)$ be the projection on to the second summand as in \ref{kerpartial}. With these identifications we write the composition map as 

$$C_{1,n,k}^\ori \xrightarrow{\partial_1} C_{0,n,k} \xrightarrow{p} \ker(\partial_0)$$ 

and call it, $\opartial_1 := p\circ \partial_1$.

\begin{prop}\label{middleexact} There exists some $k_0(\chi) \in \Z_{>0}$ such that for all $k\ge k_0(\chi)$ the composite map $\opartial_1= p\circ \partial_1$ is an isomorphism from $C_{1,k,n}^{\ori}$ onto $\ker(\partial_0)$.
\end{prop}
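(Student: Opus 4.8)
The plan is to reduce the assertion to a triangular linear-algebra statement. By Lemma~\ref{kernel} the projection attached to $C_{0,n,k}=C_{0,n,k}^{\min}\oplus C_{0,n,k}^{\nonmin}$ restricts to an isomorphism $\ker(\partial_0)\xrightarrow{\sim}C_{0,n,k}^{\nonmin}$, so $\opartial_1=p\circ\partial_1$ is an isomorphism onto $\ker(\partial_0)$ exactly when the composite
\[\psi\colon\ C_{1,k,n}^{\ori}\ \xrightarrow{\partial_1}\ C_{0,n,k}\ \lra\ C_{0,n,k}^{\nonmin}\]
is an isomorphism; this is what I would prove, taking for $k_0(\chi)$ the constant of Lemma~\ref{samerigid}, enlarged if needed so that Proposition~\ref{surjective} also applies.

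\vskip8pt

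\emph{Step 1: a common indexing of source and target.} By Proposition~\ref{cardsame}, together with Remark~\ref{edgenotminimal} and the orbit descriptions in \ref{orbit} and \ref{n=1}, there is a canonical bijection $\Omega_{1,k,n}\cong\Omega_{0,k,n}\setminus\Omega_{0,k,n}^{\min}$ matching each orbit $\D$ --- viewed as a $G_{e_\D}(k)$-orbit for a suitable edge $e_\D$ with $d(e_\D,v_0)\le n$ --- with the same $\D$ viewed as a genuine $G_{w_\D}(k)$-orbit for the endpoint $w_\D$ of $e_\D$ owning it. Since $BT_1^{\ori}$ singles out one orientation of each edge, this identifies
\[C_{1,k,n}^{\ori}\ =\ \bigoplus_{\D}I(\D,\chi)_{\bbG_{e_\D}(k)-\an}\,,\qquad C_{0,n,k}^{\nonmin}\ =\ \bigoplus_{\D}I(\D,\chi)_{\bbG_{w_\D}(k)-\an}\,,\]
the sums running over the finite set $\Omega_{0,k,n}\setminus\Omega_{0,k,n}^{\min}$. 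For $k\ge k_0(\chi)$, Lemma~\ref{samerigid} --- applicable since $\D\in\Omega_{e_\D,k}\cap\Omega_{w_\D,k}$ --- shows the two summands attached to a given $\D$ coincide; in fact, by the proof of that lemma, each of them equals $\cO(\D)$. Hence $\psi$ is an endomorphism of $\bigoplus_{\D}\cO(\D)$.

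\vskip8pt

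\emph{Step 2: $\psi$ on a single summand.} Fix $\D$ and write $e_\D=\{v,v'\}$ with $v=w_\D$. For $f$ in the $\D$-summand, $\partial_1(f)$ has component $\pm f$ at $v$ and $\mp f$ at $v'$. In $V_v=\bigoplus_{\D''\in\Omega_{v,k}}I(\D'',\chi)_{\bbG_v(k)-\an}$ the vector $f$ lies entirely in the $\D$-slot, and $\D$ is non-minimal, so this contributes $\pm f$ to the $\D$-slot of $C_{0,n,k}^{\nonmin}$. In $V_{v'}$ the set $\D$ is the disjoint union of the $G_{v'}(k)$-orbits $\D^\dagger_1,\dots,\D^\dagger_r$ it contains (\ref{orbit}), each \emph{properly} contained in $\D$, and $f\mapsto\bigoplus_i f|_{\D^\dagger_i}$ under the transition map $V_{e_\D}\hra V_{v'}$ of the coefficient system, so these restricted pieces are automatically $\bbG_{v'}(k)$-analytic vectors. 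Using \ref{minimalorbits} and \ref{edgenotminimal} one checks that, after projecting to $C_{0,n,k}^{\nonmin}$, precisely the non-minimal $\D^\dagger_i$ survive and each such $\D^\dagger_i$ again indexes an element of $\Omega_{0,k,n}\setminus\Omega_{0,k,n}^{\min}$ --- concretely, all $\D^\dagger_i$ die when $v$ is the endpoint of $e_\D$ closer to $v_0$ and $d(v',v_0)=n$, and all survive otherwise. Thus, on the $\D$-summand,
\[\psi(f)\ =\ \pm f\ \ (\text{$\D$-slot})\ \ \mp\ \sum_i f|_{\D^\dagger_i}\ \ (\text{$\D^\dagger_i$-slots}),\qquad \D^\dagger_i\subsetneq\D\,.\]

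\vskip8pt

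\emph{Step 3: triangularity and conclusion.} Partially order $\Omega_{0,k,n}\setminus\Omega_{0,k,n}^{\min}$ by inclusion of the underlying subsets of $G$ (equivalently, of $\bbP^1(F)$). By Step~2, $\psi$ is lower triangular for this order: on each $\D$-summand the diagonal entry is $\pm\id$, every off-diagonal contribution is supported on orbits strictly smaller than $\D$, and two distinct summands with the same underlying set are never mixed. Since the index set is finite, passing to a linear refinement of the order shows $\psi$ is invertible; hence $\opartial_1$ is an isomorphism. The step I expect to be the main obstacle is Step~2: pinning down, for each $\D$, which restricted pieces $f|_{\D^\dagger_i}$ persist in $C_{0,n,k}^{\nonmin}$ and checking that each persisting one indexes a strictly smaller orbit still in the index set. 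This is where the containment combinatorics of \ref{orbit}, \ref{containment}, \ref{minimalorbits} and the analytic-vector identifications of Lemmas \ref{samerigid} and \ref{resrigid} (the latter via Lemma~\ref{kernel}) all enter.
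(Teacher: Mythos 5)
Your proposal is correct and follows essentially the same route as the paper: identify $C_{1,k,n}^{\ori}$ with $C_{0,n,k}^{\nonmin}$ via \ref{samerigid} and \ref{cardsame}, compute $\psi$ summand-by-summand, and observe lower triangularity with $\pm\id$ on the diagonal with respect to (a linear refinement of) the inclusion ordering, exactly as in the paper's formula \ref{opartial}. Your Step 2 makes explicit which restriction components $f|_{\D^\dagger_i}$ actually appear after the projection to $C_{0,n,k}^{\nonmin}$ (a point the paper's formula glosses over by just writing the codomain as $\bigoplus_{\D\notin\Omega_{0,k,n}^{\min}}$), but this is a matter of exposition, not a different method.
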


\begin{proof}

We refine the partial ordering on $\oOmega_{0,k,n}$ to a total ordering on  $\Omega_{0,k,n}\setminus\Omega_{0,k,n}^{\min}$.
By \ref{samerigid} and \ref{cardsame} $C_{1,k,n}$  maps isomorphically onto $C_{0,k,n}^{\nonmin}$ for all $k\ge k_0(\chi)$ for some $k_0(\chi)\in \Z_{>0}$. Thus we can view (for $k\ge k_0(\chi)$) $\opartial_1$ explicitly as a map between

$$\left[\bigoplus_{\substack{\D \in \Omega_{0,k,n}\setminus \Omega_{0,k,n}^{\min}}} I(\D,\chi)_{\bbG_{v}(k)-\an}\right] \stackrel{\opartial_1}{\longrightarrow}\left[\bigoplus_{\substack{\D \in \Omega_{0,k,n}\setminus \Omega_{0,k,n}^{\min}}} I(\D,\chi)_{\bbG_{v}(k)-\an}\right]$$

\vskip8pt

Let us define  for $e =\{v_1, v_2\}\in BT_1$, $\sgn^e_{v_i}= \begin{cases} 1 \;\;\; \;\;\mbox{if}\; (v_i, v_j) \in BT_1^{\ori}\\ -1 \;\;\; \mbox{if}\; (v_j, v_i) \in BT_1^{\ori}\end{cases}$.

\vskip8pt 

Given $f_{\D,v} \in I(\D, \chi)_{\bbG_e-\an}= I(\D, \chi)_{\bbG_v-\an}$ for some $e= \{v,v'\}\in BT_{1,n}$ and $\D \in \Omega_{e,k}\cap \Omega_{v,k}$ indexing elements of  $ \bigoplus_{\substack{\D \in \Omega_{0,k,n}\setminus \Omega_{0,k,n}^{\min}}} I(\D,\chi)_{\bbG_{v}(k)-\an} $ as $(f_{\D, v})_{\D, v}$ we see that,

\begin{numequation}\label{opartial} \begin{array}{cl} & \opartial_1(0, \cdots, f_{\D,v}, \cdots, 0) \\
&\\
= & (g_{\D_\alpha, v_\alpha})_{\D_\alpha, v_\alpha}\\
&\\
= & \left\{\begin{array}{ccl} \sgn^e_v. f_{\D, v} & , & \mbox{if}\; \; \D_\alpha= \D, v_\alpha= v, \\
\sgn^e_v. f_{\D, v}|_{\D'} & , & \mbox{if}\; \; \D_\alpha= \D', v_\alpha= v', \oD'\subset\oD\\
0 & , & \mbox{otherwise} \end{array}\right.
\end{array}
\end{numequation}
 
This shows that $\opartial_1$ can be expressed as a lower triangular $r \times r$ matrix with $\pm Id$ on the diagonal and $\pm\res^{\D}_{\D'}$ maps on the off-diagonal elements which corresponds to $((\D, v), (\D', v'))$ such that $\D'\subset \D$ and $\{v, v'\} \in BT_1$ and $0$ elsewhere, where $r= 2q^{k-1}(q+1) \frac{q^n-1}{q-1}$ and $\res^{\D}_{\D'}(f_{\D,v}) := g_{\D', v'}= f_\D|_{\D'}$. Thus $\opartial_1$ is an isomorphism as claimed.
\end{proof}

The exactness of the chain complex \ref{chaincomp} for $V:= \Ind^G_B(\chi)$ now follows from  \ref{injective}, \ref{surjective} and \ref{middleexact}.

\subsection{An example of the matrix of \texorpdfstring{$\opartial_1$}{}}
 We give an example of the matrix of $\opartial_1$ in the case of $n=1, k=1$. Let $v^0, v^1,v^\infty$ be the vertices adjacent to $v_0$. Let $\D_0$, $\D_1$, $\D_\infty$ be the $B$-$G_{v_0}(1)$ double cosets. Let $e_0, e_1, e_\infty$ be the oriented edges connecting $v_0$ (origin) with $v^0, v^1,v^\infty$, respectively. We may assume that $\D_i$ and $\D_{j \cup k} := \D_j \cup \D_k$ are the two $B$-$G_{e_i}(1)$ double cosets of $G_{e_i}(1)$, where $\{i,j,k\} = \{0,1,\infty\}$. The group $G_{v^i}(1)$ has then the $B$-$G_{v^i}(1)$ double cosets $\D^i_0, \D^i_1$, whose union is $\D_i$, and the double coset $\D_{j \cup k}$ for $j \neq i$, $k \neq i$ (and $j \neq k$). We then have 
\[\Omega_{0,1,1}^{\min} = \{\D^0_0, \D^0_1, \D^1_0, \D^1_1,\D^\infty_0, \D^\infty_1\}\]
and 
\[\Omega_{0,1,1}^{{\nonmin}} = \{\D_0, \D_1, \D_\infty, \D_{0 \cup 1},\D_{0 \cup \infty}, \D_{1 \cup \infty}\}\]
Let $p$ be the projection from $C_{0,n,k}$ to $C_{0,n,k}^{\nonmin}$, and set $\opartial_1 = p \circ \partial_1$. We write

\[\begin{array}{rcl}C_{1,1,1}^\ori & = &  I(\D_{1 \cup \infty},\chi)_{G_{e_0}-\an}  \oplus I(\D_{0 \cup \infty},\chi)_{G_{e_1}-\an} \oplus  I(\D_{0\cup 1},\chi)_{G_{e_\infty}-\an}\\
&& \oplus I(\D_0,\chi)_{G_{e_0}-\an}   \oplus I(\D_1,\chi)_{G_{e_1}-\an}
 \oplus I(\D_\infty,\chi)_{G_{e_\infty}-\an} 
 \end{array}\]
 
 \[\begin{array}{rcl}C_{0,1,1}^{\nonmin} & = &  I(\D_{1 \cup \infty},\chi)_{G_{v_{0,0}}-\an}  \oplus I(\D_{0 \cup \infty},\chi)_{G_{v_{0,1}}-\an} \oplus  I(\D_{0\cup 1},\chi)_{G_{v_{0,\infty}}-\an}\\
&& \oplus I(\D_0,\chi)_{G_{v_0}-\an}   \oplus I(\D_1,\chi)_{G_{v_0}-\an}
 \oplus I(\D_\infty,\chi)_{G_{v_0}-\an} 
 \end{array}\]
 The the matrix of $\opartial_1$ is given by
 \[\left(\begin{array}{cccccc} 
-1 & 0 & 0& 0 & 0 & 0 \\
0  & -1 & 0& 0 & 0 & 0 \\
0  & 0 &  -1& 0 & 0 & 0 \\
0  & \res^{0 \cup \infty}_0 & \res^{0 \cup 1}_0& 1 & 0 & 0 \\
\res^{1 \cup \infty}_1& 0 &  \res^{0 \cup 1}_1& 0 & 1 & 0 \\
\res^{1 \cup \infty}_\infty& \res^{0 \cup \infty}_\infty & 0& 0 & 0 & 1 \\
\end{array}\right)\]
where $\res^{i \cup j}_i(f_{\D_{i\cup j, v^k}}):= g_{\D_i, v_0}= f_{\D_{i\cup j}}|_{\D_i}$.

\bibliographystyle{abbrv}
\bibliography{mybib}

\end{document}